\newtheorem{theorem}{Theorem}[section]
\newtheorem{lemma}[theorem]{Lemma}
\theoremstyle{definition}
\newtheorem{definition}[theorem]{Definition}
\newtheorem{proposition}[theorem]{Proposition}
\newtheorem{remark}[theorem]{Remark}
\numberwithin{equation}{section}
\newcommand{\spann}{\mathrm{span}}
\newcommand{\spin}{\mathrm{Spin}}
\newcommand{\trace}{\mathrm{tr}}
\newcommand{\hmo}{\mathrm{Hom}}
\newcommand{\ricci}{\mathrm{Ric}}
\begin{document}
\title {Invariant Ricci-flat Metrics of Cohomogeneity One with Wallach Spaces as Principal Orbits}
\author{Hanci Chi}
\date{\today}
\maketitle

\begin{abstract}
We construct a continuous 1-parameter family of smooth complete Ricci-flat metrics of cohomogeneity one on vector bundles over $\mathbb{CP}^2$, $\mathbb{HP}^2$ and $\mathbb{OP}^2$ with respective principal orbits $G/K$ the Wallach spaces $SU(3)/T^2$, $Sp(3)/(Sp(1)Sp(1)Sp(1))$ and $F_4/\spin(8)$. Almost all the Ricci-flat metrics constructed have generic holonomy. The only exception is the complete $G_2$ metric discovered in \cite{bryant1989construction}\cite{gibbons1990Einstein}. It lies in the interior of the 1-parameter family on $\bigwedge_-^2\mathbb{CP}^2$.
All the Ricci-flat metrics constructed have asymptotically conical limits given by the metric cone over a suitable multiple of the normal Einstein metric on $G/K$.
\end{abstract}

\tableofcontents

\section{Introduction}
\subsection{Background and Main Result}
A Riemannian manifold $(M,g)$ is \emph{Ricci-flat} if its Ricci curvature vanishes:
\begin{equation}
\label{general Ricci-flat}
\ricci(g)=0.
\end{equation}
A Ricci-flat manifold is the Euclidean analogy of a vacuum solution of the Einstein field equations.

In this article, we study complete noncompact Ricci-flat manifolds of cohomogeneity one.
A Riemannian manifold $(M,g)$ is of cohomogeneity one if a Lie Group $G$ acts isometrically on $M$ such that the principal orbit $G/K$ is of codimension one. The Ricci-flat condition \eqref{general Ricci-flat} is then reduced to a system of ODEs.

Many examples of cohomogeneity one Ricci-flat metrics have special holonomy. These include the first example of an inhomogeneous Einstein metric, which is also a K\"ahler metric. It was constructed in \cite{calabi1975construction} on a non-compact open set of $\mathbb{C}^n$. A complete Calabi--Yau metric was constructed on $T^*\mathbb{S}^2$ independently in \cite{calabi1979metriques}\cite{eguchi1979self}. The construction was generalized to $T^*\mathbb{CP}^n$ in \cite{calabi1979metriques} and those Ricci-flat metrics are hyper-K\"ahler. Cohomogeneity one K\"ahler--Einstein metrics were constructed on complex line bundles over a product of compact K\"ahler--Einstein manifolds in  \cite{bergery1982nouvelles}\cite{KEcoho1}. Complete metrics with $G_2$ or $\spin(7)$ holonomy can be found in \cite{bryant1989construction}\cite{gibbons1990Einstein} \cite{cvetivc2002cohomogeneity}\cite{cvetivc2004new}\cite{foscolo_infinitely_2018}. 

Ricci-flat metrics with generic holonomy, for example, were constructed on various vector bundles in \cite{bergery1982nouvelles}\cite{bohm_inhomogeneous_1998}\cite{wang1998Einstein}\cite{chen2011examples}. It is further shown in \cite{buzano2015family}\cite{buzano_non-kahler_2015} that for infinitely many dimensions, there exist examples which are homeomorphic but not diffeomorphic. The case where the isotropy representation of the principal orbit contains exactly two inequivalent irreducible summands was studied in \cite{bohm_inhomogeneous_1998}\cite{wink2017cohomogeneity}. In this article, we consider examples with three inequivalent summands. Specifically, let $(G,H,K)$ be one of 
\begin{equation}
\label{Cases}
\begin{split}
&\text{I. } (SU(3),S(U(2)U(1)),S(U(1)U(1)U(1))),\\ &\text{II. } (Sp(3),Sp(2)Sp(1),Sp(1)Sp(1)Sp(1)),\\
&\text{III. }(F_4,\spin(9),\spin(8)).
\end{split}
\end{equation}
For these triples, we construct Ricci-flat metrics on the corresponding cohomogeneity one vector bundles $M$ with unit sphere bundle 
$H/K\hookrightarrow G/K\rightarrow G/H.$ 
The singular orbits $G/H$'s are respectively 
$\mathbb{CP}^2$, $\mathbb{HP}^2$ and $\mathbb{OP}^2$. 
The principal orbits $G/K$'s are \emph{Wallach spaces}. They appeared explicitly in Wallach classification of even dimensional homogeneous manifolds with positive sectional curvature  \cite{wallach1972compact}.
Throughout this paper, the letters $j,k,l$ will denote three distinct numbers in $\{1,2,3\}$ whenever more than one of them appear in a formula together. Let $d=\dim(H/K)$ and $n=\dim(G/K)$. 
As will be shown in Section \ref{Smoothness}, each $M$ is in fact an irreducible (sub)bundle of $\bigwedge^{d}_-T^*(G/H)$.

In all three cases, we can rescale the normal metric on $G/K$ to a metric $Q$, whose restriction on $H/K$ is the standard metric with constant sectional curvature 1. Take $Q$ as the background metric for $G/K$. As will be shown in Section \ref{Cohomogeneity One Ricci-flat Equation}, the isotropy representation $\mathfrak{g}/\mathfrak{k}$ has $\mathbb{Z}_3$-symmetry among its three inequivalent irreducible summands. By Schur's lemma, any $G$-invariant metric on $G/K$ has the form
\begin{equation}
\label{invariant metric on g/k}
g_{G/K}=f_1^2\left.Q\right|_{\mathfrak{p}_{1}}\oplus f_2^2\left.Q\right|_{\mathfrak{p}_{2}}\oplus f_3^2\left.Q\right|_{\mathfrak{p}_{3}}
\end{equation}
for some $f_j>0$. Correspondingly, the Ricci endomorphism $r$ of $G/K$, defined by $g_{G/K}(r(\cdot),\cdot)=\ricci(\cdot,\cdot)$, has the form 
\begin{equation}
\label{ricci on principaltotal}r=r_1\left.Q\right|_{\mathfrak{p}_{1}}\oplus r_2\left.Q\right|_{\mathfrak{p}_{2}}\oplus r_3\left.Q\right|_{\mathfrak{p}_{3}},
\end{equation}
where
\begin{equation}
\label{ricci on principal}
\begin{split}
&r_j=\dfrac{a}{f_j^2}+b\left(\dfrac{f_j^2}{f_k^2f_l^2}-\dfrac{f_k^2}{f_j^2f_l^2}-\dfrac{f_l^2}{f_j^2f_k^2}\right)
\end{split}
\end{equation}
for some constants $a$ and $b$. Their values were computed in \cite{nikonorov2014classification}. We have
\begin{table}
\centering
\caption{}
\label{constant}
\begin{tabular}{ l|llll }
\hline
Case & $d$ & $n$ & $a$ & $b$\\ 
\hline
I & 2& 6& $\frac{3}{2}$ & $\frac{1}{4}$\\ 
\hline
II & 4& 12& $4$ & $\frac{1}{2}$\\ 
\hline
III & 8& 24&$9$ & $1$ \\ 
\hline
\end{tabular}.
\end{table}

\begin{remark}
\label{scalr of spher basic ineq}
A basic observation on $a$ and $b$ is $a-2b=d-1.$ This is not surprising since $Q$ is the sectional curvature 1 metric on $\mathbb{S}^{d}$. Another observation is $
a-6b\geq 0,$
where the equality is achieved in Case I. These observations are frequently used in this article, especially in Section \ref{Completensese} and Section \ref{Entrance Zone}.
\end{remark}

Note that all three possible $\frac{f_j^2}{f_k^2f_l^2}$'s appear in \eqref{ricci on principal}. An important motivation for our choices of principal orbits to consider is to study the complications that arise from the simultaneous presence of the terms $\frac{f_1^2}{f_2^2f_3^2}$,$\frac{f_2^2}{f_1^2f_3^2}$ and $\frac{f_3^2}{f_1^2f_2^2}$. If two of $f_j$'s are identical, say $f_2\equiv f_3$, the Ricci endomorphism takes a simpler form, with $r_1=\frac{a-2b}{f_1^2}+b\frac{f_1^2}{f_2^4}$ and $r_2\equiv r_3=\frac{a}{f_2^2}-b\frac{f_1^2}{f_2^4}$. The Ricci-flat ODE system for this special case then reduces to the one for $\mathfrak{g}/\mathfrak{k}$ with two inequivalent irreducible summands considered in \cite{bohm_inhomogeneous_1998}\cite{wink2017cohomogeneity}. It is noteworthy that the functional $\widehat{\mathcal{G}}$ introduced in \cite{bohm_inhomogeneous_1998} does not have any positive real root for Case I. Nevertheless, the two summands case can be viewed as the subsystem of the ODE system studied in this article. The invariant compact set constructed in Section \ref{Completensese} can be used to prove the existence of complete Ricci-flat metric for this special case. With the condition $f_2\equiv f_3$ relaxed, we prove the following theorem.
\begin{theorem}
\label{main 1}
There exists a continuous 1-parameter family of non-homothetic complete smooth invariant Ricci-flat metrics on each $M$.
\end{theorem}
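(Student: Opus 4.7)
The plan is to recast the Ricci-flat condition for a cohomogeneity-one metric $g=dt^2+g_{G/K}(t)$ on $M$, with $g_{G/K}(t)$ of the form \eqref{invariant metric on g/k}, as an autonomous first-order system and to analyse it via an invariant-set argument in the spirit of \cite{bohm_inhomogeneous_1998}\cite{wink2017cohomogeneity}. Combining the principal-orbit Ricci components \eqref{ricci on principal} with the radial contributions of $\ricci(g)=0$ gives three second-order ODEs for $(f_1,f_2,f_3)$ together with a first-order Hamiltonian constraint. To kill the residual time-translation and overall-scale symmetries, I would pass to projective coordinates, for example logarithmic derivatives $X_j=\dot f_j/f_j$ together with shape variables built from $1/f_j$ rescaled by a common factor; the Hamiltonian then cuts out a flow-invariant hypersurface on the compactified phase space.

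Next, I would set up the smooth initial-value problem at the singular orbit $G/H$. Because the collapsing fibre is $H/K\cong S^d$, one distinguished summand, say $\mathfrak{p}_1$, is tangent to the sphere and must shrink like $f_1(t)=c_1 t+O(t^3)$, while $\mathfrak{p}_2\oplus\mathfrak{p}_3=\mathfrak{g}/\mathfrak{h}$ is tangent to $G/H$; $G$-invariance of the induced metric on $G/H$ forces $f_2(0)=f_3(0)$ with $\dot f_2(0)=\dot f_3(0)=0$, so these two functions are even in $t$. The smoothness analysis to be carried out in Section \ref{Smoothness}, together with standard extension theory at a regular-singular point of an ODE, then produces a real-analytic two-parameter family of local solutions parametrised by $c_1$ and $c:=f_2(0)$. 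The overall homothety $f_j\mapsto\lambda f_j$, $t\mapsto\lambda t$ leaves $c_1$ invariant and sends $c\mapsto\lambda c$, so after fixing the scale (say $c=1$) one obtains a continuous 1-parameter family indexed by $c_1$, and different values of $c_1$ give genuinely non-homothetic metrics.

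The main step, and the main obstacle, is to prove long-time existence and forward completeness for every $c_1>0$. For this I would construct a compact subset $\mathcal C$ of the projectivised phase space, flow-invariant under the Ricci-flat ODE, that contains every outgoing trajectory from the singular orbit. Its defining inequalities should combine sign conditions on the Ricci eigenvalues $r_j$ from \eqref{ricci on principal}, monotonicity of the mean curvature, and controls on the shape ratios $f_j/f_k$; the crucial algebraic inputs are the identities $a-2b=d-1>0$ and $a-6b\geq 0$ of Remark \ref{scalr of spher basic ineq}, used to verify that the vector field points strictly inward on $\partial\mathcal C$. Case I is the most delicate, since there $a-6b=0$ and the B\"ohm functional $\widehat{\mathcal G}$ of \cite{bohm_inhomogeneous_1998} has no positive root; the invariant set must be produced by hand rather than by a direct reduction to the two-summand analysis of \cite{bohm_inhomogeneous_1998}\cite{wink2017cohomogeneity}.

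Once the trajectory is trapped in $\mathcal C$ for all forward time, no $f_j$ collapses and the solution extends globally; the projectivised $\omega$-limit set must then lie in the critical set of the reduced flow. A Lyapunov-type argument using the (projectivised) scalar curvature or mean curvature should identify this limit as the critical point corresponding to the normal Einstein metric on $G/K$, yielding $f_j(t)\sim\lambda_j t$ with common asymptotic slopes. This gives forward completeness and the asymptotically conical structure over a rescaled normal Einstein metric on $G/K$ claimed in the abstract. Continuous dependence on $c_1$ by standard ODE theory then upgrades this to a continuous 1-parameter family of non-homothetic complete smooth Ricci-flat metrics on $M$, proving Theorem \ref{main 1}.
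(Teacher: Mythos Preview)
Your smoothness analysis at the singular orbit is wrong, and this misidentifies the parameter of the family. You assert that $\dot f_2(0)=\dot f_3(0)=0$ and that $f_2,f_3$ are even in $t$, with the free parameter being the slope $c_1$ of $f_1$. In fact the collapsing sphere $H/K=\mathbb{S}^d$ carries the standard metric, which forces $\dot f_1(0)=1$; there is no freedom here. The genuine free parameter comes from the first-order behaviour of $f_2,f_3$: the Eschenburg--Wang analysis (Proposition~\ref{Smooooooth}) gives
\[
\lim_{t\to 0}(f_1,f_2,f_3,\dot f_1,\dot f_2,\dot f_3)=(0,h_0,h_0,1,-h_1,h_1),
\]
with $h_0>0$ the homothety scale and $h_1\in\mathbb{R}$ the actual modulus. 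The point is that although $\mathfrak{q}=\mathfrak{p}_2\oplus\mathfrak{p}_3$ is $H$-irreducible, the space $W_1^-=\hmo(\chi,S^2(\mathfrak{q}))^H$ is one-dimensional (Proposition~\ref{w1-}), so a first-order antisymmetric term between $f_2^2$ and $f_3^2$ is allowed. Your hypothesis $h_1=0$ is exactly the totally-geodesic case $f_2\equiv f_3$, which collapses to the two-summand problem already treated in \cite{bohm_inhomogeneous_1998}\cite{wink2017cohomogeneity}; the new content of Theorem~\ref{main 1} is precisely $h_1\neq 0$.

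There is a second gap in the long-time existence step. You propose a single compact invariant set $\mathcal{C}$ trapping \emph{every} outgoing trajectory. The paper cannot do this: the natural invariant set $\hat S_3$ built in Section~\ref{Completensese} traps $\gamma_{s_1}$ initially only when $s_1=0$ (equivalently $h_1=0$). For $s_1>0$ the curve starts outside $\hat S_3$, and a separate ``entrance zone'' $\hat U_{(\delta,p,k)}$ (Section~\ref{Entrance Zone}) must be constructed, with carefully tuned parameters $\delta,p,k$, to show that $\gamma_{s_1}$ is funnelled into $\hat S_3$ in finite time. This argument works only for $|s_1|<\sqrt{k(d+1)s_0/(16d)}$, so the theorem yields a 1-parameter family over a small interval, not over all parameter values as your outline suggests.
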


\begin{remark}
Ricci-flat metrics constructed in Case II and Case III all have generic holonomy. In Case I, the 1-parameter family of smooth Ricci-flat metrics contains in its interior the complete smooth $G_2$ metric that was first constructed in \cite{bryant1989construction}\cite{gibbons1990Einstein}. The other metrics in that family all have generic holonomy. Therefore, for $M$ in Case I, the moduli space $\mathcal{M}_{G_2}$ of $G_2$ metric is \emph{not} isolated in $\mathcal{M}_0$ the moduli space of Ricci-flat metric in the $C^0$ sense. Such a phenomenon cannot occur on a simply connected spin closed manifold, for example, by Theorem 3.1 in \cite{wang1991preserving}.
\end{remark}

\begin{definition}
\label{AC}
Let $(N,g_N)$ and $(M,g_M)$ be Riemannian manifolds of respective dimension $n$ and $n+1$. Let $t$ be the geodesic distance from some point on $M$. Then $M$ has one \emph{asymptotically conical (AC) end} if there exists a compact subset $\check{M}\subset M$ such that $M\backslash \check{M}$ is diffeomorphic to $(1,\infty)\times N$ with $g_M=dt^2+t^2g_N+o(1)$ as $t\to \infty.$
\end{definition}

With further analysis on the asymptotic behavior of Ricci-flat metrics in Theorem \ref{main 1}, we are able to prove the following:
\begin{theorem}
\label{main 2}
Each Ricci-flat metric in Theorem \ref{main 1} has an AC end with limit the metric cone over a suitable multiple of the normal Einstein metric on $G/K$.
\end{theorem}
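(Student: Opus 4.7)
The plan is to analyze the long-time behavior of each Ricci-flat trajectory $(f_1,f_2,f_3)(t)$ from Theorem \ref{main 1} by passing to dimensionless variables in which the metric cone over a suitable multiple $\lambda^2 Q$ of the normal Einstein metric appears as a fixed point of the renormalized flow, and then to prove that this fixed point is the unique $\omega$-limit of the trajectory.

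Concretely, I would recast the cohomogeneity one Ricci-flat ODE in autonomous polynomial form by reparametrizing time via $\frac{d\eta}{dt}=\mathcal{H}$, where $\mathcal{H}=\sum_j d_j\frac{\dot f_j}{f_j}$ is the mean curvature of the principal orbit and $d_j=\dim\mathfrak{p}_j$, and introducing normalized variables
\[
X_j=\frac{\dot f_j}{\mathcal{H} f_j},\qquad Y_j=\frac{1}{\mathcal{H} f_j},\qquad j=1,2,3.
\]
The conservation law arising from $\ricci=0$ together with the tangential equations confines the flow to a compact real-algebraic set on which the dynamics is polynomial. The self-similar solution $f_j(t)=\lambda t$, which gives exactly the cone $dt^2+t^2\lambda^2 Q$, corresponds to the fixed point $X_1=X_2=X_3=\frac{1}{n}$, $Y_1=Y_2=Y_3=\frac{1}{n\lambda}$, with $\lambda>0$ determined by the requirement that $\lambda^2 Q$ be Einstein on $G/K$ with Einstein constant $n-1$ (so that the cone is Ricci-flat). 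Convergence of the trajectory to this fixed point in the $\eta$-variable, once the change of time back to $t$ is controlled, translates into $f_j(t)=\lambda t+o(t)$ uniformly in $j$, which is precisely the AC condition of Definition \ref{AC}.

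The main obstacle is uniqueness of the $\omega$-limit. From the invariant compact set constructed in Section \ref{Completensese}, the $\omega$-limit $\Omega$ is nonempty and flow-invariant; its candidate points are fixed points corresponding to $G$-invariant Einstein metrics on $G/K$ (only finitely many, classified in \cite{nikonorov2014classification}) together with possible degenerate boundary orbits. To exclude everything except the normal Einstein fixed point I would construct a monotone Lyapunov functional adapted to the $\mathbb{Z}_3$-symmetric three-summand structure and exploiting the sharp inequalities $a-2b=d-1$ and $a-6b\geq 0$ of Remark \ref{scalr of spher basic ineq}, and combine it with a local stability analysis of the remaining Einstein critical points showing that their unstable manifolds inside the Ricci-flat level set are transverse to the family produced in Theorem \ref{main 1}. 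The entrance-zone analysis of Section \ref{Entrance Zone}, which pins the trajectory down on the side of the singular orbit, feeds into this exclusion argument by restricting which critical points the forward orbit can even approach. The delicate point is the Lyapunov functional itself: in the two-summand subsystem the function $\widehat{\mathcal{G}}$ of \cite{bohm_inhomogeneous_1998} suffices, but for Case I it has no positive real root, so the three-summand problem requires a genuinely new monotone quantity built from the variances of the $X_j$ and $Y_j$ together with the algebraic structure of the Ricci endomorphism \eqref{ricci on principal}.
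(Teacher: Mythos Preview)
Your broad strategy --- pass to dimensionless polynomial variables, identify the cone as a fixed point, and show the trajectory converges to it --- matches the paper. But the mechanism you propose for pinning down the $\omega$-limit is both more complicated than necessary and not actually executed: you postpone the key step to a ``genuinely new monotone quantity'' that you do not construct, and you invoke transversality of unstable manifolds of the non-normal Einstein points, which is never needed.

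The paper's argument is far more elementary on both fronts. First, the monotone quantity is simply the product $Z_1Z_2Z_3$: since $\mathcal{G}\geq \frac{1}{n}$ on $\{\mathcal{H}\equiv 1\}$, one has $(Z_1Z_2Z_3)'=Z_1Z_2Z_3\bigl(3\mathcal{G}-\frac{1}{d}\bigr)\geq 0$, so inside the compact invariant set $\hat{S}_3$ this product converges, forcing $\mathcal{G}\to\frac{1}{n}$ along a subsequence and hence $X_j\to\frac{1}{n}$. An algebraic computation of $(X_1+X_2-2X_3)'$ combined with a separately proved strict inequality $X_3<\frac{1}{n}$ (Lemma~\ref{upper X_3}) then forces $Z_1=Z_2=Z_3$ at any such sequential limit, so $p_1$ lies in the $\omega$-limit set. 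Second --- and this is the step your proposal misses entirely --- the linearization at $p_1$ restricted to $T_{p_1}(C\cap\{\mathcal{H}\equiv 1\})$ has all eigenvalues strictly negative, so $p_1$ is a \emph{sink}. Once the trajectory gets near $p_1$ it is trapped; there is no need to analyze unstable manifolds of $p_2$ or to build a Lyapunov function excluding the other Einstein points. The AC statement then follows in one line from $\lim_{\eta\to\infty}\frac{X_j}{\sqrt{Z_kZ_l}}=\sqrt{\frac{a-b}{n-1}}$.

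So the gap in your proposal is that its hardest announced step (the new Lyapunov functional) is unnecessary, while the actual key observations --- the monotonicity of $Z_1Z_2Z_3$, the strict bound $X_3<\frac{1}{n}$, and the fact that $p_1$ is a sink --- are absent.
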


\begin{remark}
In Case I, the normal Einstein metric on the principal orbit $SU(3)/T^2$ admits a (strict) nearly K\"ahler structure. Hence the metric cone over $G/K$ is the singular $G_2$ metric which was first constructed in \cite{bryant1987metrics}. The other two principal orbits, however, do not admit (strict) nearly K\"ahler structure  \cite{davila_homogeneous_2012}.
\end{remark}


\subsection{Organization}
This paper is structured as followings. In Section \ref{Cohomogeneity One Ricci-flat Equation} and \ref{Smoothness}, we discuss some details of the geometry of the cohomogeneity one manifolds $M$. Based on the work in \cite{eschenburg2000initial}, we reduce \eqref{general Ricci-flat} to a system of ODEs \eqref{Ricci-flat1} with a conservation law \eqref{old conservation law}. A $G$-invariant Ricci-flat metric around $G/H$ is hence represented by an integral curve defined on $[0,\epsilon)$. We derive the condition for smooth extension to $G/H$ using Lemma 1.1 in \cite{eschenburg2000initial}. If in addition, the integral curve is defined on $[0,\infty)$, the corresponding Ricci-flat metric is complete. 

In Section \ref{Local Existence}, we apply the coordinate change introduced in \cite{dancer2008non}\cite{dancer2008some}. The ODE system is transformed to a polynomial one. Invariant Einstein metrics on $G/H$ and $G/K$ are transformed to critical points of the new system. We carry out linearizations at these critical points and prove the local existence of invariant Ricci-flat metrics around $G/H$. An integral curve defined on $[0,\epsilon)$ is transformed to a new one that is defined on $(-\infty,\epsilon')$ for some $\epsilon'\in\mathbb{R}$. Each integral curve represents a Ricci-flat metric on $M$ up to homothety. It is determined by a parameter $s_1$ that controls the principal curvature of $G/H$ at $t=0$. To show the completeness of the metric is equivalent to proving that the new integral curve is defined on $\mathbb{R}$.  

The proof of completeness of the metric is divided into two sections. In Section \ref{Completensese}, we construct a compact invariant set whose boundary contains critical points that represent the invariant metric on $G/H$ and the normal Einstein metric on $G/K$. The construction is almost the same for all three cases with a little difference in Case I. Section \ref{Entrance Zone} proves that as long as $s_1$ is close enough to zero, integral curves of Ricci-flat metrics enter the compact invariant set constructed in Section \ref{Completensese} in finite time, hence proving the completeness.

In Section \ref{aymptoticlimit}, we analyze the asymptotic behavior of all the Ricci-flat metrics constructed in Section \ref{Entrance Zone}. There also exist solutions to the polynomial system that represent singular Ricci-flat metrics. They are discussed in Section \ref{a digression}. Results in this article are summarized by a plot at the end.

With similar techniques introduced in Section \ref{Completensesetotal}, we can also show that there exists a 2-parameter family of Poincar\'e--Einstein metrics on each $M$. More details will appear in another upcoming article.

\textbf{Acknowledgement.} The author is grateful to his PhD supervisor, Prof. McKenzie Wang for his guidance and encouragement.

\section{Local Solution Near Singular Orbit}
\label{Local EXISTENCEEE}
\subsection{Cohomogeneity One Ricci-flat Equation}
\label{Cohomogeneity One Ricci-flat Equation}
In this section, we derive the system of ODEs whose solutions give Ricci-flat metrics of cohomogeneity one on $M$. 

Since $M$ is of cohomogeneity one, there is a $G$-diffeomorphism between $M\backslash({G/H})$ and $(0,\infty)\times G/K$. We construct a Ricci-flat metric $g$ on $M$ by setting $(0,\infty)$ as a geodesic and assigning a $G$-invariant metric $g_{G/K}$ to each hypersurface $\{t\}\times G/K$, i.e., define
\begin{equation}
\label{Ricci-flat metric on M}
g=dt^2+g_{G/K}(t)
\end{equation}
on $M$. 
 By \cite{eschenburg2000initial}, if $g_{G/K}(t)$ satisfies
\begin{equation}
\label{old Ricci-flat1}
\dot{g}_{G/K}=2g_{G/K}(L\cdot,\cdot),
\end{equation}
\begin{equation}
\label{old Ricci-flat2}
\dot{L}=-\trace(L)L+r,
\end{equation}
\begin{equation}
\label{old Ricci-flat3}
\trace(\dot{L})=-\trace(L^2),
\end{equation}
\begin{equation}
\label{old Ricci-flat4}
d(\trace (L))+\delta^{\nabla} L=0,
\end{equation}
on $(0,\epsilon)$, where $\delta^{\nabla}\colon \Omega^1(G/K,T(G/K))\rightarrow T^*(G/K)$ is the divergence operator composed with the musical isomorphism, then $g$ is a Ricci-flat metric on $(0,\epsilon)\times G/K$. 

Note that \eqref{old Ricci-flat1} provides a formula for computing $L(t)$ the shape operator of hypersurface $\{t\}\times G/K$ for each $t\in (0,\epsilon)$.
By \cite{back1986local} and \cite{eschenburg2000initial}, Equation \eqref{old Ricci-flat4} automatically holds for a $C^3$ metric satisfying \eqref{old Ricci-flat1} and \eqref{old Ricci-flat2} if there exists a singular orbit of dimension smaller than $\dim(G/K)$.
Canceling the term $\trace{(\dot{L})}$ using \eqref{old Ricci-flat2} and \eqref{old Ricci-flat3} yields the conservation law
\begin{equation}
\label{old conservation}
R-(\trace{(L)})^2+\trace{(L^2)}=0.
\end{equation}

We shall focus on deriving specific formulas for \eqref{old Ricci-flat1},\eqref{old Ricci-flat2} and \eqref{old conservation} on $M$. It requires a closer look at isotropy representations of $G/K$ and $G/H$.
We fix notations first. Each irreducible complex representation is characterized by inner products between the dominant weight and simple roots on nodes of the corresponding Dynkin diagram. We use $[a]$ for class $A_1=B_1=C_1$; $[a,b]$ for $C_2=B_2$ with the shorter root on the right end; $[a,b,c,d]$ for $B_4$ with the shorter root on the right end. Furthermore, let $\mathfrak{t}$ be the Lie algebra of $S(U(1)U(1)U(1))$. Choose $Q$-orthogonal decomposition $\mathfrak{t}=\mathfrak{t}_1\oplus\mathfrak{t}_2$, where
$$
\mathfrak{t}_1=\spann_\mathbb{R}\left\{
\begin{bmatrix}
i&&\\
&-i&\\
&&0
\end{bmatrix}
\right\},\quad \mathfrak{t}_2=\spann_\mathbb{R}\left\{
\begin{bmatrix}
i&&\\
&i&\\
&&-2i
\end{bmatrix}
\right\}.
$$
Let $\theta_j^a$ denote the complexified irreducible representation of circle generated by $\mathfrak{t}_j$ with weight $a$. We use $\Lambda_8$ and $\Delta_8^\pm$ to respectively denote the complexified standard representation and spin representations of $\spin(8)$. We use $\mathbb{I}$ to denote the trivial representation.

\begin{proposition}
The formula of $g_{G/K}$ is given by \eqref{invariant metric on g/k}. 
\end{proposition}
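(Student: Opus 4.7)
The plan is to reduce the proposition to a direct application of Schur's lemma by explicitly decomposing the isotropy representation $\mathfrak{g}/\mathfrak{k}$ in each of the three cases into three pairwise inequivalent irreducible $K$-modules. Since a $G$-invariant metric on $G/K$ is the same data as a $K$-invariant inner product on $\mathfrak{m}\cong\mathfrak{g}/\mathfrak{k}$ (where $\mathfrak{m}$ is the $Q$-orthogonal complement of $\mathfrak{k}$ in $\mathfrak{g}$), once pairwise inequivalence of the irreducible summands $\mathfrak{p}_1,\mathfrak{p}_2,\mathfrak{p}_3$ is established, Schur's lemma forces every such inner product to be a positive scalar multiple of $Q|_{\mathfrak{p}_j}$ on each summand, and orthogonal across distinct summands. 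This gives exactly the form \eqref{invariant metric on g/k}.

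The work is therefore the case-by-case identification of the summands in the notation set up just before the statement. In Case I, $K=S(U(1)U(1)U(1))$ is a $2$-torus with Lie algebra $\mathfrak{t}=\mathfrak{t}_1\oplus\mathfrak{t}_2$. The standard off-diagonal decomposition of $\mathfrak{su}(3)/\mathfrak{t}$ into three real $2$-planes corresponding to the three positive roots yields three real irreducible summands whose complexifications are $\theta_1^{2}\otimes\theta_2^{0}$, $\theta_1^{1}\otimes\theta_2^{3}$, $\theta_1^{-1}\otimes\theta_2^{3}$ (up to an obvious choice of basis for $\mathfrak{t}^*$); these carry three distinct pairs of weights, so they are pairwise inequivalent as real $T^2$-modules. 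In Case II, $\mathfrak{sp}(3)/(\mathfrak{sp}(1)\oplus\mathfrak{sp}(1)\oplus\mathfrak{sp}(1))$ splits as $[1]_j\otimes_\mathbb{H}[1]_k$ for the three unordered pairs $\{j,k\}\subset\{1,2,3\}$, where $[1]_i$ denotes the standard $Sp(1)$-module on the $i$th factor; the three $4$-dimensional real summands are inequivalent because each is acted on trivially by a different one of the three $Sp(1)$ factors. In Case III, the splitting into three inequivalent $8$-dimensional $\spin(8)$-modules $\Lambda_8$, $\Delta_8^+$, $\Delta_8^-$ is the standard triality decomposition of $\mathfrak{f}_4/\mathfrak{spin}(9)$ intersected with the further reduction to $\spin(8)\subset\spin(9)$; inequivalence is triality of $D_4$, which distinguishes these three fundamental representations.

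With the three inequivalent irreducible summands $\mathfrak{p}_1,\mathfrak{p}_2,\mathfrak{p}_3$ in hand, let $g_{G/K}$ be any $G$-invariant metric on $G/K$. Viewed as a $K$-invariant bilinear form on $\mathfrak{m}$, Schur's lemma says $g_{G/K}(\mathfrak{p}_j,\mathfrak{p}_k)=0$ for $j\neq k$ (otherwise one would obtain a nonzero $K$-equivariant map between inequivalent irreducibles), and $g_{G/K}|_{\mathfrak{p}_j}=c_j\,Q|_{\mathfrak{p}_j}$ for some $c_j>0$ (using that $Q|_{\mathfrak{p}_j}$ spans the line of $K$-invariant inner products on each real irreducible summand of real type — which is the case for all three cases here since the summands are realified from self-conjugate complex representations or are genuinely real). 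Writing $c_j=f_j^2$ yields \eqref{invariant metric on g/k}.

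The only potentially delicate point is the claim that the space of $K$-invariant symmetric bilinear forms on each $\mathfrak{p}_j$ is one-dimensional. This would fail if some $\mathfrak{p}_j$ were reducible over $\mathbb{R}$ as two equivalent complex irreducibles of complex type; the verification amounts to checking that each of the three summands in each case is either of real type or, when complex, is inequivalent to its conjugate as a real module. In Cases II and III this is automatic from the quaternionic/real description above; in Case I one checks that each $2$-plane carries a pair of conjugate nonzero weights, so the underlying real module is irreducible and of complex type, for which the space of invariant symmetric forms is indeed one-dimensional. This step is the main technical obstacle, but it is a finite check.
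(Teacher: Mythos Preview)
Your proposal is correct and follows essentially the same approach as the paper: decompose $\mathfrak{g}/\mathfrak{k}$ into three pairwise inequivalent irreducible $K$-modules (the paper records exactly your summands in its Table~\ref{Kmodule}) and invoke Schur's lemma. Your extra paragraph verifying that the space of $K$-invariant symmetric bilinear forms on each $\mathfrak{p}_j$ is one-dimensional is a point the paper leaves implicit, so your write-up is, if anything, slightly more careful on this detail.
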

\begin{proof}
With $(G,H,K)$ listed in \eqref{Cases}, 
we have the following $Q$-orthogonal decomposition for $\mathfrak{g}$:
\begin{equation}
\label{decomposition}
\begin{split}
\mathfrak{g}&=\mathfrak{h}\oplus \mathfrak{q} \quad \text{as a representation of $\left.Ad(G)\right|_{H}$}\\
&=(\mathfrak{k}\oplus \mathfrak{p}_{1})\oplus (\mathfrak{p}_{2}\oplus\mathfrak{p}_{3})\quad \text{as a representation of $\left.Ad(G)\right|_{K}$}.
\end{split}
\end{equation}
Irreducible $K$-modules $\mathfrak{p}_{j}$'s are all of dimension $d$ but they are inequivalent to each other. Specifically, we have Table \ref{Kmodule}.
\begin{table}[h!]
\centering
\caption{}
\label{Kmodule}
\begin{tabular}{ l |l l l }
\hline
Case 
& $\mathfrak{p}_{1}\otimes \mathbb{C}$ &$\mathfrak{p}_{2}\otimes \mathbb{C}$&$\mathfrak{p}_{3}\otimes \mathbb{C}$\\
\hline
I 
&$\theta_1^2\otimes\mathbb{I}$ &$\theta_1^1\otimes\theta_2^3$&$\theta_1^{-1}\otimes\theta_2^3$\\
\hline
II 
&$[1]\otimes[1]\otimes\mathbb{I}$&$[1]\otimes\mathbb{I}\otimes[1]$&$\mathbb{I}\otimes[1]\otimes[1]$\\
\hline
III 
&$\Lambda_8$&$\Delta_8^+$&$\Delta_8^-$\\
\hline
\end{tabular}
\end{table}
By Schur's lemma, a $G$-invariant metric on $G/K$ has the form of \eqref{invariant metric on g/k}.
\end{proof}

\begin{proposition}
\label{metric}
The formula of Ricci endomorphism on $(G/K,g_{G/K})$ is given by \eqref{ricci on principaltotal} and \eqref{ricci on principal} with constants $a$ and $b$ listed in Table \ref{constant}.
\end{proposition}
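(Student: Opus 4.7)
The plan is to specialize the general Wang--Ziller formula for the Ricci curvature of an invariant metric on a compact homogeneous space (see Besse, \emph{Einstein Manifolds}, 7.38) to the diagonal metric \eqref{invariant metric on g/k}. Because $\mathfrak{p}_1, \mathfrak{p}_2, \mathfrak{p}_3$ are pairwise inequivalent irreducible $K$-modules, Schur's lemma forces the Ricci endomorphism to be block diagonal with scalar blocks $r_j\,\id_{\mathfrak{p}_j}$, which is exactly \eqref{ricci on principaltotal}. The task thus reduces to deriving \eqref{ricci on principal} and identifying the constants $a, b$.

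I would then write $r_j$ via Wang--Ziller in terms of (i) the coefficient $\lambda$ determined by $-B|_{\mathfrak{p}_j} = \lambda\, Q|_{\mathfrak{p}_j}$, which since $\mathfrak{g}$ is simple and $Q$ is a uniform multiple of $-B$ is the same constant for all three $j$, and (ii) the symmetric triple structure constants $[ikl] = \sum Q([e_\alpha^i, e_\beta^k], e_\gamma^l)^2$ taken over $Q$-orthonormal bases of the summands. The crucial structural input is that each subalgebra $\mathfrak{h}_j := \mathfrak{k} \oplus \mathfrak{p}_j$ is the isotropy of a rank-one symmetric space --- $\mathbb{CP}^1$, $\mathbb{HP}^1 = \mathbb{S}^4$, and $\mathbb{OP}^1 = \mathbb{S}^8$ respectively --- so that $[\mathfrak{p}_j, \mathfrak{p}_j] \subset \mathfrak{k}$ and thus $[jjk] = 0$ for all $j, k$. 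The only surviving triple constants are the symmetric permutations of $[123]$, whose common value I will call $c$. Substituting these data into Wang--Ziller, the Killing-form term yields $\lambda/(2 f_j^2)$ and the $c$-terms reassemble exactly into $\tfrac{c}{2d}\bigl(\tfrac{f_j^2}{f_k^2 f_l^2} - \tfrac{f_k^2}{f_j^2 f_l^2} - \tfrac{f_l^2}{f_j^2 f_k^2}\bigr)$, confirming the shape \eqref{ricci on principal} with $a = \lambda/2$ and $b = c/(2d)$.

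It remains to pin down the numerical values in Table \ref{constant}. For this I would use two independent normalizations. The first is $a - 2b = d - 1$, obtained by restricting to the subsystem $f_2 = f_3 = f_1$ within the $\mathfrak{p}_1$-sector and comparing with the Ricci eigenvalue $d - 1$ of the round sphere $\mathbb{S}^d = H/K$ of sectional curvature one --- this is the relation already highlighted in Remark \ref{scalr of spher basic ineq}. The second relation comes from evaluating at the normal Einstein metric $f_1 = f_2 = f_3 = 1$, where $r_j = a - b$ must equal the Einstein constant of $(G/K, Q)$; the latter is determined by the known Einstein constant of the Killing-form normal metric together with the ratio $-B/Q$, itself pinned down by the rank-one symmetric pair data above. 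Solving the resulting $2\times 2$ linear system case by case yields the entries of Table \ref{constant}. The main obstacle is bookkeeping: computing $\lambda$ and $c$ explicitly in each of the three cases, which amounts to the concrete calculations already carried out in \cite{nikonorov2014classification} and may be cited directly.
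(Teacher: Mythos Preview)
Your proposal is correct and follows essentially the same route as the paper: both arguments invoke the Besse Chapter~7 formula for homogeneous Ricci/scalar curvature together with the bracket relations $[\mathfrak{p}_j,\mathfrak{p}_j]\subset\mathfrak{k}$ and $[\mathfrak{p}_j,\mathfrak{p}_k]\subset\mathfrak{p}_l$ to deduce the shape \eqref{ricci on principal}, and both ultimately defer the numerical values of $a,b$ to \cite{nikonorov2014classification}. The only cosmetic difference is that the paper computes the scalar curvature via Besse~(7.39) and then varies, whereas you apply the Ricci formula~(7.38) directly; your additional remarks on recovering $a,b$ from the two normalizations $a-2b=d-1$ and $r_j(p_1)=a-b$ are a nice supplement but not needed once Nikonorov is cited.
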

\begin{proof}
Since the Ricci endomorphism is also $G$-invariant, it has the form of \eqref{ricci on principaltotal}. To compute its formula, use (7.39) in \cite{besse2007einstein} to derive the scalar curvature on $G/K$ and then apply variation. For each case, since $[\mathfrak{p}_{j},\mathfrak{p}_{j}]\subset \mathfrak{k}$ and $[\mathfrak{p}_j,\mathfrak{p}_k]\subset \mathfrak{p}_l$, each $r_j$ in \eqref{ricci on principaltotal} has the form of \eqref{ricci on principal}.
\end{proof}

Take $M$ as an associated vector bundle to principal $H$-bundle $G\rightarrow G/H$ of cohomogeneity one. As the orbit space is of dimension one, the action of $H$ on the unit sphere of $\mathbb{R}^{d+1}$ must be transitive. Then the group $K$ is taken as an isotropy group of a fixed nonzero element in $\mathbb{R}^{d+1}$, say $v_0=(1,0,\dots,0)$. It is clear that $H/K=\mathbb{S}^d$. Hence $G/K$ is indeed a unit sphere bundle over $G/H$. In this setting, $g_{G/K}(t)$ is an $S^2(\mathfrak{p}_{1}\oplus\mathfrak{p}_{2}\oplus\mathfrak{p}_{3})^{K}$-valued function with
each $f_j$ in \eqref{invariant metric on g/k} as a positive function. Correspondingly, the Ricci endomorphism $r$ in \eqref{ricci on principaltotal} is also an $S^2(\mathfrak{p}_{1}\oplus\mathfrak{p}_{2}\oplus\mathfrak{p}_{3})^{K}$-valued function. 

\begin{proposition}
\label{ricci}
For $(G,H,K)$ listed in \eqref{Cases}, Ricci-flat conditions \eqref{old Ricci-flat1} \eqref{old Ricci-flat2} and \eqref{old conservation} respectively become
\begin{equation}
\label{Ricci-flat0}
L=\frac{\dot{f}_1}{f_1}\left.Q\right|_{\mathfrak{p}_{1}}\oplus \frac{\dot{f}_2}{f_2}\left.Q\right|_{\mathfrak{p}_{2}}\oplus \frac{\dot{f}_3}{f_3}\left.Q\right|_{\mathfrak{p}_{3}},
\end{equation}
\begin{equation}
\label{Ricci-flat1}
\dfrac{\ddot{f_j}}{f_j}-\left(\dfrac{\dot{f_j}}{f_j}\right)^2=-\left(d\frac{\dot{f_1}}{f_1}+d\frac{\dot{f_2}}{f_2}+d\frac{\dot{f_3}}{f_3}\right)\dfrac{\dot{f_j}}{f_j}+r_j,\quad j=1,2,3
\end{equation}
and
\begin{equation}
\label{old conservation law}
\begin{split}
&-d\sum_{j=1}^3\left(\dfrac{\dot{f_j}}{f_j}\right)^2=-\left(\sum_{j=1}^3d\dfrac{\dot{f_j}}{f_j}\right)^2+R.
\end{split}
\end{equation}
\end{proposition}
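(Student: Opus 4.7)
The proposition is essentially a direct substitution of the Schur-block decomposition \eqref{invariant metric on g/k} and \eqref{ricci on principaltotal} into the general cohomogeneity one system \eqref{old Ricci-flat1}, \eqref{old Ricci-flat2}, \eqref{old conservation}. The plan is to handle the three equations in the order they appear.

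First I would derive \eqref{Ricci-flat0} from \eqref{old Ricci-flat1}. Differentiating the expression $g_{G/K}=f_1^2Q|_{\mathfrak{p}_1}\oplus f_2^2Q|_{\mathfrak{p}_2}\oplus f_3^2Q|_{\mathfrak{p}_3}$ in $t$ yields $\dot g_{G/K}=2f_1\dot f_1 Q|_{\mathfrak{p}_1}\oplus 2f_2\dot f_2 Q|_{\mathfrak{p}_2}\oplus 2f_3\dot f_3 Q|_{\mathfrak{p}_3}$. Comparing with $\dot g_{G/K}=2g_{G/K}(L\cdot,\cdot)$, and invoking Schur's lemma (the $\mathfrak{p}_j$ are pairwise inequivalent irreducible $K$-modules, so $L$ is block-diagonal in this decomposition), I identify $L|_{\mathfrak{p}_j}=\frac{\dot f_j}{f_j}\,\mathrm{Id}$, which is precisely \eqref{Ricci-flat0}.

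Next I turn to \eqref{Ricci-flat1}. Since each $\mathfrak{p}_j$ has dimension $d$, the shape operator satisfies $\trace(L)=d\bigl(\frac{\dot f_1}{f_1}+\frac{\dot f_2}{f_2}+\frac{\dot f_3}{f_3}\bigr)$. Differentiating the block form of $L$ gives $(\dot L)|_{\mathfrak{p}_j}=\bigl(\frac{\ddot f_j}{f_j}-\frac{\dot f_j^2}{f_j^2}\bigr)\,\mathrm{Id}$. Substituting these together with \eqref{ricci on principaltotal} into \eqref{old Ricci-flat2} and reading off the coefficient on $\mathfrak{p}_j$ (again using Schur to match blocks) produces exactly \eqref{Ricci-flat1}. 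The conservation law \eqref{old conservation law} follows similarly by noting $\trace(L^2)=d\sum_j(\dot f_j/f_j)^2$ and substituting both traces into \eqref{old conservation}.

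The only step that requires any genuine content beyond bookkeeping is the justification that $L$ and $r$ really are block-diagonal with scalar blocks, but this has already been set up: Proposition \ref{metric} provides the block form of $r$, and the argument for $L$ is identical since \eqref{old Ricci-flat1} is an algebraic relation between $\dot g_{G/K}$, $g_{G/K}$ and $L$, all of which must be $G$-invariant sections of $S^2(\mathfrak{p}_1\oplus\mathfrak{p}_2\oplus\mathfrak{p}_3)^K$. Thus no obstruction arises, and the proof reduces to the substitutions above.
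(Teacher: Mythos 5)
Your proposal is correct and follows exactly the route the paper intends: the paper's own proof is a one-line appeal to "direct computation" from the block forms of $g_{G/K}$ and $r$, and your substitutions (Schur's lemma forcing $L$ to be scalar on each $\mathfrak{p}_j$, then $L|_{\mathfrak{p}_j}=\frac{\dot f_j}{f_j}\mathrm{Id}$, $\trace(L)=d\sum_j\frac{\dot f_j}{f_j}$, $\trace(L^2)=d\sum_j(\dot f_j/f_j)^2$) are precisely that computation carried out. No gaps.
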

\begin{proof}
The proof is complete by computation results in Proposition \ref{metric} and Proposition \ref{ricci}.
\end{proof}

In summary, constructing a smooth complete cohomogeneity one Ricci-flat metric on $M$ is essentially equivalent to solving $g_{G/K}(t)$ that satisfies \eqref{Ricci-flat0}, \eqref{Ricci-flat1} and \eqref{old conservation law}. The fundamental theorem of ODE guarantees the existence of solution on neighborhood around $\{t_0\}\times G/K$ for any $t_0\in (0,\infty)$. In order to have a smooth complete Ricci-flat metric on $M$, we need to show that 

1. (Smooth extension) the solution exists on a tubular neighborhood around $G/H$ and extends smoothly to the singular orbit;

2. (Completeness) the solution exists on $(0,\infty)\times G/K$.

We discuss the smooth extension in Section \ref{Smoothness} and Section \ref{Local Existence}. The proof for completeness is in Section \ref{Completensesetotal}.

\subsection{Smoothness Extension}
\label{Smoothness}
It is not difficult to guarantee the smoothness of $g_{G/K}(t)$ at $t=0$ as a $S^2(\mathfrak{p}_{1}\oplus\mathfrak{p}_{2}\oplus\mathfrak{p}_{3})$-valued function. However, the smooth function does not guarantee the smooth extension of $g=dt^2+g_{G/K}(t)$ as a metric on $G/H$ as $t\to 0$. By Lemma 1.1 in \cite{eschenburg2000initial}, the question boils down to studying the slice representation $\chi=\mathbb{R}^{d+1}$ of M and the isotropy representation $\mathfrak{q}$ of $G/H$. We rephrase the lemma below.

\begin{lemma}\cite{eschenburg2000initial} 
\label{EW}
Let $g(t): [0,\infty)\rightarrow S^2(\chi\oplus \mathfrak{q})^K$ be a smooth curve with Taylor expansion at $t=0$ as $\sum_{l=0}^\infty g_lt^l$. Let $W_l=\hmo(S^l(\chi),S^2(\chi\oplus \mathfrak{q}))^H$ be the space of $H$-equivariant homogeneous polynomials of degree $l$. Let $\iota\colon W_l\rightarrow S^2(\chi\oplus \mathfrak{q})$ denote the evaluation map at $v_0=(1,0,\dots,0)$. Then the map $g(t)$ has a smooth extension to $G/H$ as a symmetric tensor if and only if $g_l\in\iota(W_l)$ for all $l$.
\end{lemma}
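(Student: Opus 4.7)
The plan is to convert the smoothness problem on $M$ into one about $H$-equivariant smooth maps on the slice $\chi$, and then invoke an equivariant form of Borel's theorem on prescribed Taylor coefficients.

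First I would use the tubular neighborhood theorem to identify a $G$-invariant neighborhood of $G/H$ in $M$ with the associated bundle $G\times_H\chi$, arranged so that the tangent space at $[e,0]$ splits $H$-equivariantly as $\mathfrak{q}\oplus\chi$ and the unit-speed normal geodesic through the point corresponding to $v_0$ is $t\mapsto[e,tv_0]$. Under this identification a $G$-invariant smooth section of $S^2T^*M$ on the neighborhood corresponds bijectively to a smooth $H$-equivariant map $F\colon\chi\rightarrow S^2(\chi\oplus\mathfrak{q})$. Because $K=\stab_H(v_0)$, the value $F(tv_0)$ automatically lies in the $K$-fixed subspace, and the curve $t\mapsto F(tv_0)$ records exactly the data of $g(t)$. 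The statement thereby reduces to: $g$ arises by radial restriction from some smooth $H$-equivariant $F$ if and only if every Taylor coefficient $g_l$ lies in $\iota(W_l)$.

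For the easy direction, expand such an $F$ at $0$ as $F\sim\sum_{l\geq 0}P_l$ with $P_l$ a homogeneous polynomial map of degree $l$; uniqueness of the Taylor expansion together with the $H$-equivariance of $F$ forces each $P_l\in W_l$, and restricting to the ray gives $g_l=\iota(P_l)\in\iota(W_l)$. For the converse, given $g_l\in\iota(W_l)$ I would choose preimages $P_l\in W_l$. Componentwise Borel theorem yields a (not yet equivariant) smooth $F_0\colon\chi\rightarrow S^2(\chi\oplus\mathfrak{q})$ whose formal Taylor series at $0$ is $\sum_l P_l$; averaging $F_0$ over the compact group $H$ with respect to Haar measure produces a smooth $H$-equivariant $F$ whose Taylor series is unchanged, since each $P_l$ is already $H$-equivariant and so fixed by the averaging. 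Then $t\mapsto F(tv_0)$ is a smooth curve agreeing with $g(t)$ to infinite order at $0$, so their difference is smooth and flat at the origin; an $H$-equivariant bump extension of this flat remainder combined with $F$ realizes $g$ as the radial restriction of a globally smooth $G$-invariant symmetric tensor on the tubular neighborhood.

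The main obstacle is the sufficiency direction, specifically verifying that the $H$-averaging step does not disturb the prescribed Taylor coefficients, and more subtly that matching Taylor expansions on a single normal geodesic actually controls the ambient tensor on a full neighborhood of $G/H$. The first point rests on the built-in $H$-equivariance of each $P_l$; the second relies crucially on the compactness of $H$ (which legitimizes Haar averaging) together with the transitive action of $H$ on the unit sphere of $\chi$ that is already part of the cohomogeneity-one setup, since this transitivity translates smoothness along one ray into smoothness on the whole slice.
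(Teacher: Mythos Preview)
The paper does not give its own proof of this lemma: it is quoted verbatim from Eschenburg--Wang \cite{eschenburg2000initial} and used as a black box, so there is nothing in the paper to compare your argument against.

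That said, your sketch is essentially the argument in the cited reference and is correct in outline. The one place where the wording is loose is the final ``$H$-equivariant bump extension'' of the flat remainder. What is actually needed there is the observation that a $K$-invariant curve $r(t)$ with $r^{(l)}(0)=0$ for all $l$ extends to a smooth $H$-equivariant map $R\colon\chi\to S^2(\chi\oplus\mathfrak{q})$ by setting $R(h\cdot t v_0)=h\cdot r(t)$; well-definedness uses $K$-invariance of $r(t)$, smoothness away from $0$ is automatic, and smoothness at $0$ follows because flatness forces every partial derivative of $R$ to vanish there. With that clarification your proof goes through.
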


To compute $W_l$, we need to identify $\chi$ and $\mathfrak{q}$ first. Since $H$ acts transitively on $H/K$, the slice representation $\chi=\mathbb{R}^{d+1}$ of $M$ is irreducible and hence can be identified. Recall that $\mathfrak{q}$ is an irreducible $H$-module in decomposition \eqref{decomposition}. Hence we have Table \ref{Slicerepn}.
\begin{table}[h!]
\centering
\caption{}
\label{Slicerepn}
\begin{tabular}{ l| l l|l} 
\hline
Case & $\chi\otimes\mathbb{C}$ as an $H$-module & $\chi \otimes\mathbb{C}$  as a $K$-module &$\mathfrak{q}\otimes\mathbb{C}$ as an $H$-module\\
\hline
I  & $[2]\otimes \mathbb{I}$&$\mathbb{R}\oplus(\theta_1^2\otimes\mathbb{I})$ & $([1]\otimes \theta_2^3)\oplus ([1]\otimes \theta_2^{-3})$\\
\hline
II & $[1,0]\otimes \mathbb{I}$&$\mathbb{R}\oplus([1]\otimes[1]\otimes\mathbb{I})$& $[0,1]\otimes[1]$\\
\hline
III& $[1,0,0,0]$&$\mathbb{R}\oplus \Lambda_8$&$[0,0,0,1]$\\
\hline
\end{tabular}.
\end{table}
\begin{remark}
\label{spin(d+1)}
Recall the background metric $Q$ on $G/K$ is chosen that $\left.Q\right|_{\mathfrak{p}_1}$ is the standard metric on $\mathbb{S}^d$. Therefore, the Euclidean inner product $\langle\cdot,\cdot\rangle$ on $\chi$ can be written in ``polar coordinate'' as $dt^2+t^2\left.Q\right|_{\mathfrak{p}_1}$.
As shown in the first column of Table \ref{Slicerepn}, the action of $H$ is essentially the standard representation of $\spin(d+1)$ on $\chi$ and it  preserves $\langle\cdot,\cdot\rangle$. In the following discussion, we take $\langle\cdot,\cdot\rangle\oplus\left.Q\right|_{\mathfrak{p}_2}\oplus\left.Q\right|_{\mathfrak{p}_3}$ as the background metric of $T_pM=\chi\oplus T_p(G/H)$ for $p=[H]\in G/H$.
\end{remark}

Compare the second column of Table \ref{Slicerepn} to the first column of Table \ref{Kmodule}. It is clear that $\chi=\mathbb{R}\oplus \mathfrak{p}_{1}$ as a $K$-module. Since $\chi$ and $\mathfrak{q}$ are inequivalent $H$-modules, we have 
\begin{equation}
\label{decomposition2}
S^2(\chi\oplus \mathfrak{q})^{K}=S^2(\chi)^{K}\oplus S^2(\mathfrak{q})^{K}.
\end{equation}
Hence we have decomposition $W_l=W_l^+\oplus W_l^-$ where $W_l^+$ and $W_l^-$ are respectively valued in $S^2(\chi)$ and $S^2(\mathfrak{q})$. We are ready to compute each $W_l^\pm$.

\begin{proposition}
\label{EACHW}
For each $M$, we have 
$$
W_l^+\cong\left\{\begin{array}{ll}
\mathbb{R}& l=0\\
0& l\equiv1\mod 2\\
\mathbb{R}^2& l\equiv 0\mod 2,\quad l\geq 2\\
\end{array}\right.,\quad 
W_l^-\cong\mathbb{R}
$$
\end{proposition}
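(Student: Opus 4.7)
The plan is to exploit Remark \ref{spin(d+1)}: in all three cases $H$ acts on $\chi$ through the standard representation of $\spin(d+1)$, so one has the classical orthogonal decomposition
\begin{equation*}
S^l(\chi) \cong \bigoplus_{k=0}^{\lfloor l/2 \rfloor}\mathcal{H}^{l-2k}
\end{equation*}
into spaces $\mathcal{H}^{m}$ of degree-$m$ harmonic polynomials on $\mathbb{R}^{d+1}$. Both $W_l^+$ and $W_l^-$ then reduce via Schur's lemma to counting which $\mathcal{H}^m$ occur in the target representation.

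For $W_l^+ = \hmo(S^l(\chi),S^2(\chi))^H$, one uses $S^2(\chi) = \mathcal{H}^2\oplus\mathcal{H}^0$. Since each $\mathcal{H}^m$ occurs in $S^l(\chi)$ with multiplicity one exactly when $l\geq m$ and $l-m$ is even, the stated values $W_0^+=\mathbb{R}$, $W_l^+=0$ for odd $l$, and $W_l^+=\mathbb{R}^2$ for even $l\geq 2$ follow immediately.

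For $W_l^- = \hmo(S^l(\chi),S^2(\mathfrak{q}))^H$, I would decompose $S^2(\mathfrak{q})_{\mathbb{C}}$ as an $H$-module in each case and show that the constituents which are also $\spin(d+1)$-harmonics are precisely one copy of $\mathcal{H}^0$ and one copy of $\mathcal{H}^1$; the remaining $H$-irreducible constituents are not spherical harmonics and therefore cannot appear in any $S^l(\chi)$. Since $\mathcal{H}^0$ contributes only when $l$ is even and $\mathcal{H}^1$ only when $l$ is odd, Schur's lemma delivers $W_l^- \cong\mathbb{R}$ for every $l$.

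The three case analyses are short. In Case I, writing $\mathfrak{q}_{\mathbb{C}} = V\oplus\bar V$ with $V = [1]\otimes\theta_2^3$, the $\mathfrak{t}_2$-trivial summand of $S^2(\mathfrak{q}_{\mathbb{C}})$ is $V\otimes\bar V = [2]\oplus [0]\cong\mathcal{H}^1\oplus\mathcal{H}^0$. In Case II, applying $S^2(A\otimes B) = S^2 A\otimes S^2 B\oplus\bigwedge^2 A\otimes\bigwedge^2 B$ to $A = [0,1]$ and $B = [1]$ singles out the $Sp(1)$-trivial piece $\bigwedge^2[0,1]\otimes\bigwedge^2[1] = (\mathbb{I}\oplus [1,0])\otimes\mathbb{I}\cong\mathcal{H}^0\oplus\mathcal{H}^1$. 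In Case III, the classical spin-module identity $\mathfrak{q}\otimes\mathfrak{q}\cong\bigoplus_{k=0}^{4}\bigwedge^k\chi$, together with the dimension count $\dim S^2(\mathfrak{q}) = 136 = 1+9+126$ (uniquely realised among subsets of $\{1,9,36,84,126\}$), forces $S^2(\mathfrak{q})_{\mathbb{C}} = \bigwedge^0\oplus\bigwedge^1\oplus\bigwedge^4$; here $\bigwedge^0 = \mathcal{H}^0$ and $\bigwedge^1 = \mathcal{H}^1$, while $\bigwedge^4\chi$ (highest weight $[0,0,0,2]$) is a non-harmonic $\spin(9)$-irrep. I expect the Case III split of $\mathfrak{q}\otimes\mathfrak{q}$ into symmetric and antisymmetric parts to be the most delicate step, although it is ultimately pinned down by the uniqueness of the dimension decomposition above.
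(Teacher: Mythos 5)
Your proposal is correct and takes essentially the same route as the paper: the paper's proof consists precisely of the harmonic decomposition of $S^l(\chi)\otimes\mathbb{C}$ and the $H$-module decomposition of $S^2(\mathfrak{q})\otimes\mathbb{C}$ recorded in Table \ref{Taylor}, followed by Schur's lemma. Your case-by-case derivations (the $V\otimes\bar V$ splitting in Case I, the $S^2(A\otimes B)$ formula in Case II, and the $\Delta\otimes\Delta\cong\bigoplus_k\bigwedge^k\chi$ identity with the dimension count in Case III) simply reproduce the entries of that table explicitly.
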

\begin{proof}
From Table \ref{Slicerepn}, we can derive the decomposition of complexified symmetric products $S^l(\chi)\otimes \mathbb{C}$ and $S^l(\mathfrak{q})\otimes \mathbb{C}$ as $H$-modules, as shown in Table \ref{Taylor} below. The proof is complete.
\begin{table}[h!]
\centering
\caption{}
\label{Taylor}
\begin{tabular}{ l| l l l} 
\hline
Case & $S^{2m-1}(\chi)\otimes\mathbb{C}$& $S^{2m}(\chi)\otimes\mathbb{C}$&$S^2(\mathfrak{q})\otimes\mathbb{C}$\\
\hline
I  &$\bigoplus^m_{i=1}\left([4i-2]\otimes \mathbb{I}\right)$ & $\bigoplus^m_{i=0}\left([4i]\otimes \mathbb{I}\right)$& $([2]\otimes \theta_2^{6} )\oplus([2]\otimes \theta_2^{-6} )\oplus([2]\otimes \mathbb{I})\oplus\mathbb{I}$\\
\hline
II &$\bigoplus^m_{i=1}\left([2i-1,0]\otimes \mathbb{I}\right)$ & $\bigoplus^m_{i=0}\left([2i,0]\otimes \mathbb{I}\right)$& $([0,2]\otimes[2])\oplus([1,0]\otimes\mathbb{I})\oplus\mathbb{I}$\\
\hline
III& $\bigoplus^m_{i=1}[2i-1,0,0,0]$ &$\bigoplus^m_{i=0}[2i,0,0,0]$&$[0,0,0,2]\oplus[1,0,0,0]\oplus\mathbb{I}$\\
\hline
\end{tabular}
\end{table}
\end{proof}
In order to apply Lemma \ref{EW}, we need to find generators of each $W_l^\pm$ in Proposition \ref{EACHW}. Note that $W_l^{\pm}$ can be viewed as subspaces of $W_{l+2}^{\pm}$ by multiplying each element with $\sum_{i=0}^dx_i^2$. Hence we only need to find generators of $W_0^\pm$
, $W_2^+$ and $W_1^-$.
It is clear that $W_0^+$ is spanned by $I_{d+1}\in S^2(\chi)$ and $W_0^-$ is spanned by $I_{2d}\in S^2(\mathfrak{q})$.
It is also clear that $W_2^+$ is generated by the identity map
and $(\sum_{i=0}^d x_i^2)I_{d+1}$. Note that the identity map in the form of a homogeneous polynomial is a symmetric matrix $\Pi$ with $\Pi_{ij}=x_ix_j$ for $i,j\in\{0,1,\dots,d\}$.  

The computation for $W_1^-$ is a bit more complicated. We follow Chapter 14 in \cite{harvey_spinors_1990} and consider $\chi=\mathbb{R}\oplus \mathbb{F}$ with $\mathbb{F}$ as one of $\mathbb{C},\mathbb{H}$ and $\mathbb{O}$ for Case I, II and III, respectively.

\begin{proposition}
\label{w1-}
$W_1^-$ is generated by the $\mathbb{R}$-linear map
\begin{align*}
\Phi\colon \chi&\rightarrow S^2(\mathfrak{q})\\
(x_0,\mathbf{x})&\mapsto
\begin{bmatrix}
x_0 I_{d}& \mathsf{L}_\mathbf{x}\\
\mathsf{L}_{\bar{\mathbf{x}}} &-x_0 I_{d},
\end{bmatrix}
\end{align*}
where $\mathsf{L}_\mathbf{x}$ is the real matrix representation of left multiplication of $\mathbf{x}\in\mathbb{F}$, as shown in Table \ref{Only for j1} below.
\end{proposition}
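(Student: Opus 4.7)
The plan is to verify that $\Phi$ is a nonzero element of $W_1^-$; since $\dim_\mathbb{R} W_1^- = 1$ by Proposition \ref{EACHW}, any such $\Phi$ then generates the whole line. So three things need checking: that $\Phi(v)$ is symmetric and therefore lies in $S^2(\mathfrak{q})$ rather than merely $\edo(\mathfrak{q})$; that $\Phi$ is $H$-equivariant; and that $\Phi\not\equiv 0$.

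The first and third checks are essentially immediate. Symmetry of the block matrix reduces to $\mathsf{L}_\mathbf{x}^T = \mathsf{L}_{\bar\mathbf{x}}$ with respect to the standard Euclidean product on $\mathbb{F}$, and this follows from
\begin{equation*}
\langle\mathbf{x}\mathbf{y},\mathbf{z}\rangle = \mathrm{Re}(\overline{\mathbf{x}\mathbf{y}}\,\mathbf{z}) = \mathrm{Re}(\bar{\mathbf{y}}\bar{\mathbf{x}}\mathbf{z}) = \langle\mathbf{y},\bar{\mathbf{x}}\mathbf{z}\rangle,
\end{equation*}
which holds uniformly in $\mathbb{C},\mathbb{H},\mathbb{O}$. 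Non-triviality is clear from $\Phi(1,\mathbf{0}) = \diag(I_d,-I_d)\neq 0$.

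The substantive step is $H$-equivariance, and I would argue it by recognizing $\Phi$ as Clifford multiplication of $\chi = \mathbb{R}\oplus \mathbb{F} = \mathbb{R}^{d+1}$ on the real spin module $\mathfrak{q}\cong \mathbb{F}^2$. Using the alternative law on $\mathbb{O}$ (and associativity in $\mathbb{C},\mathbb{H}$), $\mathsf{L}_\mathbf{x}\mathsf{L}_{\bar\mathbf{x}} = |\mathbf{x}|^2 I_d$, and hence $\Phi(v)^2 = |v|^2 I_{2d}$. Combined with Remark \ref{spin(d+1)} (which identifies the $H$-action on $\chi$ with the standard representation of $\spin(d+1)$), the identifications of $\mathfrak{q}$ in the third column of Table \ref{Slicerepn}, and the real-dimension match $\dim_\mathbb{R}\mathfrak{q} = 2\dim_\mathbb{R}\mathbb{F}$, this exhibits $(\mathfrak{q},\Phi)$ as a real Clifford module for $\spin(d+1)$; equivariance of $\Phi$ under the $\spin(d+1)$-factor of $H$ is then the defining property of Clifford multiplication on its spin representation.

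The remaining issue is the extra factor in $H$ present in Cases I and II (the central $U(1)$ in $U(2)$ and the $Sp(1)$ factor in $Sp(2)Sp(1)$): from Table \ref{Slicerepn} it acts trivially on $\chi$, while on $\mathfrak{q}\cong \mathbb{F}^2$ a direct adjoint computation in $\mathfrak{g}$ shows it acts by diagonal right multiplication by a unit of $\mathbb{F}$. Such right multiplication commutes with every $\mathsf{L}_\mathbf{x}$ by associativity of $\mathbb{F}$, so its conjugation action on $\Phi(v)$ is trivial, matching its trivial action on $v$, and equivariance upgrades from $\spin(d+1)$ to all of $H$. The main obstacle I expect is Case III, where one must use the octonionic realization of $\spin(9)$ acting on $\mathbb{O}^2$ (equivalently, triality of $\spin(8)$) to identify $\mathfrak{q}$ concretely with the spin module so that the explicit block matrix form of $\Phi$ genuinely corresponds to Clifford multiplication; this is precisely where the nonassociativity of $\mathbb{O}$ enters and requires careful bookkeeping.
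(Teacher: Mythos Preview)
Your approach and the paper's share the same core idea: both exploit the Clifford relation $\Phi(v)^2=|v|^2 I_{2d}$ to produce a $\spin(d+1)$-action on $\mathfrak{q}$ and then read off equivariance of $\Phi$. The execution differs. The paper does not invoke abstract Clifford-module theory; instead it writes down the generators $\Xi(y_0,\mathbf{y})=\Phi(-1,\mathbf{0})\Phi(y_0,\mathbf{y})$ of $\spin(d+1)$ inside $\edo(\mathfrak{q})$ and verifies by direct computation, using the Moufang identities of the alternative algebra $\mathbb{F}$, that $\Xi\,\Phi(x_0,\mathbf{x})\,\Xi^{-1}=\Phi(z_0,\mathbf{z})$ for an explicit rotation $(x_0,\mathbf{x})\mapsto(z_0,\mathbf{z})$. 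This buys a uniform treatment of all three cases in which the octonionic Case~III is no harder than the others, whereas in your framework you (correctly) flag Case~III as the place requiring separate care with the triality identification. Conversely, you are more explicit than the paper about the residual $U(1)$ or $Sp(1)$ factor of $H$ in Cases I and II and why its right-multiplication action commutes with every $\mathsf{L}_{\mathbf{x}}$; the paper's argument leaves this point implicit. Both arguments ultimately rely on the identification (Table~\ref{Slicerepn}) of the given $H$-action on $\mathfrak{q}$ with the spin representation, so neither is more complete than the other on that front.
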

\begin{table}[h!]
\centering
\caption{}
\label{Only for j1}
\begin{tabular}{ l| l l l} 
\hline
Case & I & II&III\\
\hline
$\mathsf{L}_\mathbf{x}$ & $\begin{bmatrix}
x_1&-x_2\\
x_2&x_1
\end{bmatrix}
$& $\begin{bmatrix}
x_1&-x_2&-x_3&-x_4\\
x_2&x_1&-x_4&x_3\\
x_3&x_4&x_1&-x_2\\
x_4&-x_3&x_2&x_1
\end{bmatrix}$
&
$\begin{bmatrix}
x_1&-x_2&-x_3&-x_4&-x_5&-x_6&-x_7&-x_8\\
x_2&x_1 &-x_4&x_3 &-x_6&x_5 &x_8 &-x_7\\
x_3&x_4 &x_1 &-x_2&-x_7&-x_8&x_5 &x_6\\
x_4&-x_3&x_2 &x_1 &-x_8&x_7 &-x_6&x_5\\
x_5&x_6 &x_7 &x_8 &x_1 &-x_2&-x_3&-x_4\\
x_6&-x_5&x_8 &-x_7&x_2 &x_1 &x_4 &-x_3\\
x_7&-x_8&-x_5&x_6 &x_3 &-x_4&x_1 &x_2\\
x_8&x_7 &-x_6&-x_5&x_4 &x_3 &-x_2&x_1
\end{bmatrix}$
\\
\hline
\end{tabular}
\end{table}
\begin{proof}
Consider $i\Phi(\chi)$ a subspace of $\mathbb{C}\otimes_\mathbb{R} S^2(\mathfrak{q})$. Since $(i\Phi(x_0,\mathbf{x}))^2=-(x_0^2+\|\mathbf{x}\|^2)I_{d+1}$, it is clear that the matrix multiplication of $i\Phi(\chi)$ generates a Clifford algebra and hence $\spin(d+1)$. Specifically, the group is generated by elements $\Xi(y_0,\mathbf{y}):=\Phi(-1,\mathbf{0})\Phi(y_0,\mathbf{y})$ with $y_0^2+\|\mathbf{y}\|^2=1$. Since each $\mathbb{F}$ is an alternative algebra that satisfies Moufang identity, computations show
\begin{equation}
Ad(\Xi(y_0,\mathbf{y}))(\Phi(x_0,\mathbf{x}))=\Xi(y_0,\mathbf{y}) (\Phi(x_0,\mathbf{x})) \Xi(y_0,\mathbf{y})^{-1}=\Phi(z_0,\mathbf{z}),
\end{equation}
where $z_0=(y_0^2-\|\mathbf{y}\|)x_0+2y_0\langle\mathbf{y},\mathbf{x}\rangle$ and $\mathbf{z}=y_0^2\mathbf{x}-2x_0y_0\mathbf{y}-(\mathbf{y}\bar{\mathbf{x}})\mathbf{y}$. Hence $\Phi(\chi)$ is an $Ad_{\spin(d+1)}$-invariant subspace in $S^2(\mathfrak{q})$. Moreover, since 
$$
(Ad(\Xi(y_0,\mathbf{y}))(\Phi(x_0,\mathbf{x})))^2=(\Phi(x_0,\mathbf{x})))^2=(x_0^2+\|\mathbf{x}\|^2)I_{d+1},
$$
The adjoint action on $\Phi(\chi)$ induces the standard representation $\Lambda_{d+1}$ on $\mathbb{R}^{d+1}$. Therefore,
\begin{align*}
\Phi\colon (\chi,\Lambda_{d+1})&\rightarrow \left(\Phi(\chi),Ad_{\spin(d+1)}\right)
\end{align*}
is $H$-equivariant and generates $W_1^-$.
\end{proof}


With the generators known, we are ready to prove the following proposition.
\begin{proposition}
\label{Smooooooth}
The necessary and sufficient conditions for a metric $g=dt^2+g_{G/K}(t)$ on $M$ to extend to a smooth metric in a tubular neighborhood of the singular orbit $G/H$ are
\begin{equation}
\label{old initial}
\lim_{t\to 0}(f_1,f_2,f_3,\dot{f_1},\dot{f_2},\dot{f_3})
=(0,h_0,h_0,1,-h_1,h_1)
\end{equation}
for some $h_0>0$ and $h_1\in\mathbb{R}$.
\end{proposition}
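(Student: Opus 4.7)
The plan is to apply Lemma \ref{EW} to the curve $g(t) \in S^2(\chi \oplus \mathfrak{q})^K$, using the spaces $W_l^\pm$ and their generators identified in Propositions \ref{EACHW} and \ref{w1-}. Only the $l=0$ and $l=1$ conditions produce constraints on the initial data $(f_j(0),\dot{f}_j(0))$; higher-order conditions only restrict derivatives of order $\geq 2$.

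The key preliminary step is to rewrite the polar-coordinate formula \eqref{invariant metric on g/k} under the canonical ``Cartesian" identification of $T_{tv_0}M$ with $\chi \oplus \mathfrak{q}$ coming from the bundle description $\nu(G/H) \cong G\times_H \chi$. Under this identification, the direction $\mathfrak{p}_1 \subset \mathfrak{h}$ is realized by translation on the slice rather than by Killing vectors; since $\xi \in \mathfrak{p}_1$ gives a Killing vector at $v = tv_0$ of Euclidean length $t|\xi|_Q$ (per Remark \ref{spin(d+1)}) but Riemannian length $f_1(t)|\xi|_Q$, the $\mathfrak{p}_1$-summand of the metric acquires a factor $(f_1/t)^2$. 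Meanwhile $\mathfrak{q} = \mathfrak{p}_2\oplus\mathfrak{p}_3$ does not meet $\mathfrak{h}$, so no rescaling occurs on that summand. This yields
$$
g(t)|_{S^2\chi} = \left(\tfrac{f_1(t)}{t}\right)^{\!2} I_{d+1} + \left(1-\left(\tfrac{f_1(t)}{t}\right)^{\!2}\right) v_0 v_0^T,
$$
$$
g(t)|_{S^2\mathfrak{q}} = \tfrac{1}{2}(f_2^2+f_3^2)\, I_{2d} + \tfrac{1}{2}(f_2^2-f_3^2)\,\diag(I_d,-I_d).
$$
In particular, smoothness of $g(t)$ at $t=0$ already forces $f_1(0)=0$.

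Evaluating the generators from Propositions \ref{EACHW} and \ref{w1-} at $v_0$ gives $\iota(W_0^+) = \spann\{I_{d+1}\}$, $\iota(W_0^-) = \spann\{I_{2d}\}$, $\iota(W_1^+)=0$, and $\iota(W_1^-) = \spann\{\diag(I_d,-I_d)\}$. The $l=0$ constraint $g_0\in\iota(W_0)$ forces the $v_0v_0^T$-coefficient of $g_0^+$ to vanish, hence $\dot{f}_1(0)^2=1$; positivity of $f_1$ for $t>0$ then selects $\dot{f}_1(0)=1$. It also forces $f_2(0)^2=f_3(0)^2$, so $f_2(0)=f_3(0) =: h_0 > 0$. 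The $l=1$ constraint $g_1\in\iota(W_1)$ forces the $I_{2d}$-coefficient of $g_1^-$ to vanish, yielding $\dot{f}_2(0)+\dot{f}_3(0)=0$; setting $h_1 := \dot{f}_3(0)$ completes \eqref{old initial}. (The $W_1^+=0$ constraint additionally requires $\ddot{f}_1(0)=0$, which is a higher-order condition that turns out to follow automatically from the Ricci-flat ODE \eqref{Ricci-flat1} once the stated initial data are imposed.)

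For sufficiency, any smooth curve with these initial values can be extended to have higher Taylor coefficients lying in $\iota(W_l^\pm)$ (using powers of $\sum_i x_i^2$ times the lower-degree generators), producing a smooth invariant metric on $M$. The main step requiring care is the polar-to-Cartesian rescaling factor $(f_1/t)^2$ on the slice; once this is in place, the rest reduces to a direct read-off of the $l=0,1$ Taylor coefficients against the generators already computed.
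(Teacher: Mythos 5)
Your proposal is correct and follows essentially the same route as the paper: apply Lemma \ref{EW} with the generators of $W_0^\pm$ and $W_1^\pm$ from Propositions \ref{EACHW} and \ref{w1-}, after converting the polar-coordinate expression $dt^2+f_1^2\left.Q\right|_{\mathfrak{p}_1}$ on the slice to Cartesian form via the factor $(f_1/t)^2$ (the paper performs this same conversion by ``switching the background metric'' at the end, citing Remark \ref{spin(d+1)}). The read-off of the $l=0,1$ conditions — $f_1(0)=0$, $\dot f_1(0)=1$, $f_2(0)=f_3(0)=h_0$, $\dot f_2(0)+\dot f_3(0)=0$ — matches the paper's conclusion exactly.
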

\begin{proof}

The metric $g$ in LHS of \eqref{Ricci-flat metric on M} can be identified with a map
\begin{equation}
\label{metric map!}
g(t)\colon [0,\epsilon) \rightarrow S^2(\chi)^{K}\oplus S^2(\mathfrak{q})^{K}
\end{equation}
with Taylor expansion
\begin{equation}
\label{taylorforg}
g(t)=\sum_{l=0}^\infty g_lt^l.
\end{equation}
Write $g(t)=D(t)\oplus J(t)$, where $D(t)\colon [0,\infty)\rightarrow S^2(\chi)$ and $J(t)\colon [0,\infty)\rightarrow S^2(\mathfrak{q})$. The Taylor expansion \eqref{taylorforg} can be rewritten as
\begin{equation}
\label{specific taylor}
\begin{split}
&D(t)=D_0+D_1t+D_2t^2+\dots\\
&J(t)=J_0+J_1t+J_2t^2+\dots
\end{split}
\end{equation}

Since $W_2^+/W_0^+\cong \mathbb{R}$, in principle there is a free variable for the second derivative of a smooth $D(t)$. However, with the geometric setting that $t$ is a unit speed geodesic, the choice of $D_2$ is in fact determined by $D_0$. Hence we take $D_0=I_{d+1}$ and $D_2$ must be a multiple of $\left((\sum_{i=0}^d x_i^2)I_{d+1}-\Pi\right)(v_0)=
\begin{bmatrix}
0&\\
&I_d
\end{bmatrix}
$
with the multiplier determined by the choice of $D_0$. Since $H/K$ is and irreducible sphere, it is expected that there is no indeterminacy from $D(t)$. By Lemma \ref{EW}, the smooth condition for $D(t)$ with respect to background metric $\langle\cdot,\cdot\rangle$ is
$
D(t)=I_{d+1}+O(t^2).
$ This is consistent with Lemma 9.114 in \cite{besse2007einstein}.

As $g$ degenerates to an invariant metric on $G/H$ and the isotropy representation of $G/H$ is irreducible, $J_0$ is a positive multiple of $I_{2d}$. The evaluation of $\Phi$ at $v_0$ in Proposition \ref{w1-} is $\begin{bmatrix}
I_d&\\
&-I_d
\end{bmatrix}.
$ Hence by Lemma \ref{EW},
the smoothness condition for $J(t)$ is 
$$
J(t)=\begin{bmatrix}
f_2^2(t)I_{d}&\\
&f_3^2(t)I_{d}
\end{bmatrix}=c_0I_{2d}+c_1\begin{bmatrix}
I_d&\\
&-I_d
\end{bmatrix}t+O(t^2)
$$
for some $c_0>0$ and $c_1\in\mathbb{R}$. 

Recall \ref{spin(d+1)}, note that $\langle\cdot,\cdot\rangle=dt^2+t^2\left.Q\right|_{\mathfrak{p}_1}$. Switch the background metric to $dt^2+Q$, we conclude that the smoothness condition for $g$ is 
\begin{equation*}
\begin{split}
&f_1^2(t)=t^2+O(t^4)\\
&f_2^2(t)=c_0+c_1t+O(t^2)\\
&f_3^2(t)=c_0-c_1t+O(t^2)
\end{split}
\end{equation*}
Then the proof is complete.
\end{proof}

\begin{remark}
\label{already}
The Ricci-flat ODE system \eqref{Ricci-flat1} and \eqref{old conservation law} is invariant under the homothetic change $\kappa^2(dt^2+g_{G/K})$ with $ds=\kappa dt$. The smooth initial condition \ref{old initial} is transformed to  $(0, \kappa h_0, \kappa h_0, 1, h_1, -h_1).$ Hence if we abuse the notation. Multiplying $h_0$ by $\kappa>0$ while having $\dot{f}_j(0)$ unchanged give the smooth initial condition for metrics in the same homothetic family. Therefore, in the original coordinate, $h_1$ is the free variable that gives non-homothetic metrics. As shown in (2.27), only $h_1$ matters in producing different curves in the polynomial system.

Combine the analysis in Proposition \ref{Smooooooth} with the main result in \cite{eschenburg2000initial}, we conclude that there exists a 1-parameter family of Ricci-flat metric on a neighborhood around $G/H$ in $M$. We derive the same result in Section \ref{Local Existence} using a new coordinate.
\end{remark}

\begin{remark}
 Note that we always have $\lim\limits_{t\to 0}\frac{\dot{f}_3}{f_3}+\frac{\dot{f}_2}{f_2}=0$, i.e., the mean curvature of $G/H$ vanishes at $t=0$. This is consistent with Corollary 1.1 in \cite{hsianglawson}. The last two components of \eqref{old initial} shows that the smooth extension does not require $G/H$ to be totally geodesic. If $h_1$ in \eqref{old initial} vanishes, then we recover cases in \cite{bohm_inhomogeneous_1998}\cite{wink2017cohomogeneity} with $f_2\equiv f_3$.
\end{remark}

\begin{remark}
\label{symmetry}
It is worth pointing out that Equations \eqref{Ricci-flat1} and \eqref{old conservation law} are symmetric among $f_1$, $f_2$ and $f_3$. Therefore, initial condition \eqref{old initial} has two other counterparts where $f_2$ or $f_3$ collapses initially depending how $H$ is embedded in $G$. Without loss of generality, we will consider initial condition \eqref{old initial} in this article.
\end{remark}
We end this section by identifying each vector bundle $M$ as a (sub)bundle of ASD $d$-form of lowest rank. Table \ref{MidForm} lists out $H$-decomposition of $\bigwedge\nolimits^{d}\mathfrak{q}\otimes\mathbb{C}$ and dimension of each irreducible summand. The subspace $\bigwedge\nolimits^{d}_-\mathfrak{q}\otimes\mathbb{C}$ consists of summands in brace brackets. Decomposition below is mostly computed via software $\mathsf{LiE}$, with reference in \cite{brown1972riemannian}\cite{salamon1989riemannian}\cite{lopez2009canonical}.
\begin{table}[h!]
\centering
\caption{}
\label{MidForm}
\begin{tabular}{ l| l l}
\hline
Case & $H$ & $H$-decomposition of $\bigwedge\nolimits^{d}\mathfrak{q}\otimes\mathbb{C}$ and Dimension of each Summand \\ 
\hline
I & $S(U(2)U(1))$ &$
\begin{array}{r@{}l@{}}
\\
\bigwedge\nolimits^2 \left(([1]\otimes \theta_2^3)\oplus([1]\otimes \theta_2^3)\right) & {}=(\mathbb{I}\otimes\theta_2^6)\oplus(\mathbb{I}\otimes\theta_2^{-6})\oplus\mathbb{I}\oplus\{[2]\otimes\mathbb{I}\}\\
\\
\mathbf{6} & {}=\mathbf{1}+\mathbf{1}+\mathbf{1} +\{\mathbf{3}\}\end{array}
$\\ \\
\hline
II & $Sp(2)Sp(1)$ & $\begin{array} {r@{}l@{}}
\\
\bigwedge\nolimits^4 [01]\otimes[1] & {}=[01]\otimes[2]\oplus[02]\otimes\mathbb{I}\oplus\mathbb{I}\otimes[4]\oplus\mathbb{I}\oplus\{[02]\otimes[2]\oplus[01]\otimes\mathbb{I}\}\\
\\
 \mathbf{70} & {}= \mathbf{15}+\mathbf{14}+\mathbf{5}+\mathbf{1}+\{\mathbf{30}+\mathbf{5}\}
 \end{array}$\\ \\
\hline
III & $\spin(9)$ & $\begin{array} {r@{}l@{}}
\\
\bigwedge\nolimits^8 [0001]&{}=[2010]\oplus[0020]\oplus[1002]\oplus[0200]\oplus[4000]\oplus[0010]\oplus[2000]\oplus\mathbb{I}
\\
& \quad \oplus\{[2002]\oplus[0110]\oplus[1010]\oplus[3000]\oplus[0002]\oplus[1000]\}\\
\\
\mathbf{12870} & {}= \mathbf{2457}+\mathbf{1980}+\mathbf{924}+\mathbf{495}+\mathbf{450}+\mathbf{84}+\mathbf{44}+\mathbf{1} \\ 
& \quad +\{\mathbf{3900}+\mathbf{1650}+\mathbf{594}+\mathbf{156}+\mathbf{126}+\mathbf{9}\}\end{array}$\\ \\ 
\hline
\end{tabular}
\end{table}

For Case I, it is known that the trivial representation generates the invariant K\"ahler form on $\mathbb{CP}^2$. The bundle that we study in this paper is the associated bundle with respect to representation $[2]\otimes\mathbb{I}$, which is the bundle of ASD 2-form $\bigwedge^2_-T^*\mathbb{CP}^2$ that admits a complete smooth $G_2$ metric \cite{bryant1989construction}\cite{gibbons1990Einstein}.

For Case II, the trivial representation generates a canonical 4-form for Quaternionic K\"ahler manifolds, as described in \cite{salamon1989riemannian}. Explicitly, given a Quaternionic K\"ahler manifold with a triple of complex structures $(I,J,K)$ and corresponding symplectic forms $(\omega_I,\omega_J,\omega_K)$, the canonical 4-form is defined as 
$
\Omega=\omega_I\wedge\omega_I+\omega_J\wedge\omega_J+\omega_K\wedge\omega_K.
$
By Table \ref{Slicerepn}, $M$ is an associate bundle with respect to representation $[01]\otimes\mathbb{I}$ in $\bigwedge_-^4\mathfrak{q}_2\otimes\mathbb{C}$. Therefore, $M$ is indeed an irreducible subbundle of $\bigwedge_-^4T^*\mathbb{HP}^2$.

For Case III, the trivial representation generates the canonical 8-form, whose existence is proved in \cite{brown1972riemannian}. Explicit formula for the canonical 8-form can be found in \cite{lopez2009canonical}. The 9-dimensional representation $[1000]$ is the (twisted) adjoint representation of $\spin(9)$ on $\mathbb{R}^9$. Similar to Case II, the bundle that we consider in this paper is an irreducible subbundle of $\bigwedge^8_-T^*\mathbb{OP}^2$.

In conclusion, the name ``(sub)bundle of ASD $d$-form of lowest rank'' for $M$ is justified.

\subsection{Coordinate Change and Linearization}
\label{Local Existence}
We apply the coordinate change introduced in \cite{dancer2008non}\cite{dancer2008some} to the Ricci-flat system in this section. The original ODE system is transformed to a polynomial one. As described in Remark \ref{important criti}, some critical points of the new system carry geometric data. Linearizations at these critical points provide guidance on how integral curves potentially behave, which help us to construct a compact invariant set in Section \ref{Completensesetotal} to prove the completeness.

As predicted by the result in the previous section (Remark \ref{already}), analysis on the new system shows that there exists a 1-parameter family of integral curves with each represents a homothetic class of Ricci-flat metrics on a neighborhood around $G/H$.

 Consider
\begin{equation}
\label{coordinate change}
d\eta=\trace(L) dt.
\end{equation}
Define 
\begin{equation}
\label{def of variable}
X_j:=\dfrac{\frac{\dot{f_j}}{f_j}}{\trace(L)},\quad Z_j:=\dfrac{\frac{f_j}{f_kf_l}}{\trace(L)}.
\end{equation}
And define
\begin{equation*}
\mathcal{R}_j:=\frac{r_j}{(\trace(L))^2}=aZ_kZ_l+b\left(Z_j^2-Z_k^2-Z_l^2\right),\quad \mathcal{G}:=\sum_{j=1}^3 d X_j^2, \quad \mathcal{H}:=\sum_{j=1}^3dX_j.
\end{equation*}
Use $'$ to denote derivative with respect to $\eta$. In the new coordinates given by \eqref{coordinate change} and \eqref{def of variable}, the system \eqref{Ricci-flat1} is transformed to
\begin{equation}
\label{new Ricci-flat}
\begin{split}
\begin{bmatrix}
X_1\\
X_2\\
X_3\\
Z_1\\
Z_2\\
Z_3
\end{bmatrix}'=V(X_1,X_2,X_3,Z_1,Z_2,Z_3):=\begin{bmatrix}
X_1(\mathcal{G}-1)+\mathcal{R}_1\\
X_2(\mathcal{G}-1)+\mathcal{R}_2\\
X_3(\mathcal{G}-1)+\mathcal{R}_3\\
Z_1\left(\mathcal{G}-\frac{\mathcal{H}}{d}+2X_1\right)\\
Z_2\left(\mathcal{G}-\frac{\mathcal{H}}{d}+2X_2\right)\\
Z_3\left(\mathcal{G}-\frac{\mathcal{H}}{d}+2X_3\right)
\end{bmatrix},
\end{split}
\end{equation}
and the conservation law \eqref{old conservation law} becomes
\begin{equation}
\label{conservation law}
C\colon \mathcal{G}-1+d\sum_j \mathcal{R}_j=0.
\end{equation}
As $\left(\frac{1}{\trace(L)}\right)'=\frac{\mathcal{G}}{\trace(L)}$, the original variables can be recovered by
\begin{equation}
\label{recovery}
t=\int_{\eta_0}^{\eta} \exp\left(\int_{\tilde{\eta}_0}^{\tilde{\eta}} \mathcal{G} d\tilde{\tilde{\eta}}+\tilde{t}_0\right) d\tilde{\eta}+t_0,\quad  f_j=\dfrac{\exp\left(\int_{\eta_0}^\eta \mathcal{G} d\tilde{\eta}+t_0\right)}{\sqrt{Z_kZ_l}}.
\end{equation}

\begin{remark}
\label{themeaningofz}
The new variables $X_j$'s record the relative size of each principal curvature of $G/K$. Variables $Z_j$'s carry the data of relative size of each $f_j$'s. Note that $\frac{Z_j}{Z_k}=\frac{f_j^2}{f_k^2}$.
\end{remark}

In the original coordinates, a smooth solution to \eqref{Ricci-flat1} is an integral curve with variable $t\in [0,\epsilon)$. Since by \eqref{coordinate change}, $\lim\limits_{t\to 0}\eta=\lim\limits_{t\to 0}\ln\left(f_1^{d}f_2^{d}f_3^{d}\right)+\hat{\eta}=-\infty$, the original solution is transformed to an integral curve with variable $\eta\in (-\infty,\epsilon')$ for some $\epsilon'\in\mathbb{R}$. Note that the graph of the integral curve does not change when homothetic change is applied to the original variable. Hence each integral curve to the new system represent a solution in the original coordinate up to homothety.

\begin{remark}
\label{symmetry 2}
It is clear that the symmetry mentioned in Remark \ref{symmetry} remains  among pairs $(X_j,Z_j)$'s in the new system \eqref{new Ricci-flat} with \eqref{conservation law}. In addition, by the observation on $Z_j$'s derivative. It is clear that they do not change sign along the integral curve. Without loss of generality, we focus on the region where these three variables are positive. This observation provides basic estimates needed in our construction of compact invariant set (the set $P$ introduced in \eqref{set S3's friends}).
\end{remark}

\begin{remark}
\label{smooth conservation}
It is clear that $\mathcal{H}\equiv 1$ by the definition variable $X_j$. In fact, since $\mathcal{H}'=(\mathcal{H}-1)(\mathcal{G}-1)$ on $C$, the set $C\cap \{\mathcal{H}\equiv 1\}$ is flow-invariant. Furthermore, $C\cap \{\mathcal{H}\equiv 1\}$ is diffeomorphic to a level set
$$
dX_1^2+dX_2^2+d\left(\frac{1}{d}-X_1-X_2\right)^2-1+d\sum_j \mathcal{R}_j=0
$$
in $\mathbb{R}^5$. Therefore, $C\cap \{\mathcal{H}\equiv 1\}$ is a $4$-dimensional smooth manifold by the inverse function theorem. System \eqref{new Ricci-flat} can be restricted to a $4$-dimensional subsystem on $C\cap \{\mathcal{H}\equiv 1\}$.
\end{remark}

\begin{proposition}
\label{critical points}
The complete list of critical points of system \eqref{new Ricci-flat} in $C\cap \{\mathcal{H}\equiv 1\}$ is the following:
\begin{enumerate}[I.]
\item 
the set $\left\{(x_1,x_2,x_3,0,0,0)\mid \sum_j^3x_j^2=\frac{1}{d},\quad \sum_j^3x_j=\frac{1}{d}\right\}$;
\item
$\left(-\frac{1}{d},\frac{1}{d},\frac{1}{d},\pm\frac{1}{d}\sqrt{\frac{3-d}{b}},0,0\right)$ and its counterparts with pairs $(X_j,Z_j)$'s permuted. This critical point occurs only for Case I;
\item
$\left(\frac{1}{d},0,0,0,\pm\frac{1}{d},\pm\frac{1}{d}\right)$ and its counterparts with pairs $(X_j,Z_j)$'s permuted;
\item
$\left(\frac{1}{n},\frac{1}{n},\frac{1}{n},\pm\frac{2b}{d-1}\frac{1}{n}\sqrt{\frac{(n-1)(d-1)}{b(a+2b)}},\pm\frac{2b}{d-1}\frac{1}{n}\sqrt{\frac{(n-1)(d-1)}{b(a+2b)}},\pm\frac{1}{n}\sqrt{\frac{(n-1)(d-1)}{b(a+2b)}}\right)$ and its counterparts with pairs $(X_j,Z_j)$'s permuted;
\item 
$\left(\frac{1}{n},\frac{1}{n},\frac{1}{n},\pm\frac{1}{n}\sqrt{\frac{n-1}{a-b}},\pm\frac{1}{n}\sqrt{\frac{n-1}{a-b}},\pm\frac{1}{n}\sqrt{\frac{n-1}{a-b}}\right)$.
\end{enumerate}
\end{proposition}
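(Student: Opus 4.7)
The plan is to solve $V=0$ under the constraints $\mathcal{H}\equiv 1$ and $C$ by case analysis on which of $Z_1,Z_2,Z_3$ vanish. A preliminary observation simplifies things: summing the three $X_j$-equations and using $\mathcal{H}=1$ recovers exactly the conservation law $C$, so among the six critical-point equations one is redundant modulo the constraints, and the case split can be driven by the $Z_j$-equations.

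\emph{Case A (all $Z_j=0$):} every $\mathcal{R}_j$ vanishes and the $X$-equations reduce to $X_j(\mathcal{G}-1)=0$; if $\mathcal{G}\neq 1$ then every $X_j=0$, contradicting $\sum X_j=1/d$, so $\mathcal{G}=1$, which together with $\sum X_j=1/d$ cuts out family I. \emph{Case B (exactly one nonzero, say $Z_1$):} direct evaluation gives $\mathcal{R}_1=bZ_1^2$, $\mathcal{R}_2=\mathcal{R}_3=-bZ_1^2$; summing the $X$-equations yields $bZ_1^2=(\mathcal{G}-1)/d$, and the individual equations then force $X_1=-1/d$, $X_2=X_3=1/d$, $\mathcal{G}=3/d$, and $Z_1^2=(3-d)/(d^2 b)$. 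Reality requires $d\leq 3$, i.e., Case I with $d=2$, giving family II. \emph{Case C (exactly two nonzero, say $Z_2,Z_3$):} the corresponding $Z$-equations force $X_2=X_3$, and then the $X_2,X_3$-equations combine to $(X_2+X_3)(\mathcal{G}-1)=0$. The branch $\mathcal{G}=1$ forces every $\mathcal{R}_j=0$ and hence $Z_2=Z_3=0$, a contradiction. The branch $X_2+X_3=0$ yields $X_2=X_3=0$, $X_1=1/d$, $\mathcal{G}=1/d$; the $X_2$-equation then forces $Z_2^2=Z_3^2$, and the $X_1$-equation combined with the identity $a-2b=d-1$ gives $Z_2^2=1/d^2$, i.e., family III.

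\emph{Case D (all $Z_j$ nonzero):} the three $Z$-equations now force $X_1=X_2=X_3=1/n$, hence $\mathcal{G}=1/n$ and $\mathcal{R}_j=(n-1)/n^2$ for each $j$. Pairwise differences factor as
\[
\mathcal{R}_j-\mathcal{R}_k=(Z_j-Z_k)\bigl(aZ_l-2b(Z_j+Z_k)\bigr)=0,
\]
so each pair of $Z$'s is either equal or obeys the linear relation $aZ_l=2b(Z_j+Z_k)$. If no two are equal then subtracting two such linear relations gives $(a+2b)(Z_j-Z_k)=0$, impossible since $a+2b>0$. If all three are equal then $(a-b)Z^2=(n-1)/n^2$ yields family V. If exactly two are equal, say $Z_1=Z_2\neq Z_3$, then the remaining linear relation gives $Z_3=\tfrac{d-1}{2b}Z_1$; substituting into $\mathcal{R}_3=(n-1)/n^2$ and invoking $4b+d-1=a+2b$ yields family IV.

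The main obstacle is the algebraic bookkeeping in the two-equal sub-case of Case D: one must show that $(d-1)Z_1^2+\tfrac{(d-1)^2}{4b}Z_1^2=(n-1)/n^2$ repackages into the square-root form displayed in family IV, and this is exactly where the identity $4b+d-1=a+2b$ (i.e., $a-2b=d-1$) enters decisively. The rest of the argument is a mechanical enumeration of the vanishing patterns for the $Z_j$'s, with the only subtlety being to eliminate sign configurations such as $Z_2=-Z_3$ in Case C, which fail because they would force $\mathcal{R}_1$ to have the wrong sign relative to $(d-1)/d^2$.
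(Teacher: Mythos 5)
Your proposal is correct, and it is the same approach as the paper's, which simply states that the list is obtained "by direct computations": your case split on the vanishing pattern of the $Z_j$'s, the reduction of the $X$-equations via the conservation law, and the factorization $\mathcal{R}_j-\mathcal{R}_k=(Z_j-Z_k)\bigl(2b(Z_j+Z_k)-aZ_l\bigr)$ in the all-$Z_j$-nonzero case are exactly the computations needed, and I verified that each family (including the sign and reality constraints isolating Case I in family II and ruling out $Z_2=-Z_3$ in family III) comes out as you state.
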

\begin{proof}
The proof is processed by direct computations.
\end{proof}

By Remark \ref{symmetry 2}, we focus on critical points with non-negative $Z_j$'s. 

\begin{remark}
\label{important criti}
Some critical points in Proposition \ref{critical points} have further geometric significance.

\begin{itemize}
\item $p_0:=\left(\frac{1}{d},0,0,0,\frac{1}{d},\frac{1}{d}\right)$\\
This critical point is the initial condition \eqref{old initial} under the new coordinate \eqref{coordinate change}-\eqref{def of variable}, i.e., \eqref{old initial} becomes $\lim\limits_{\eta\to-\infty}(X_1,X_2,X_3,Z_1,Z_2,Z_3)=p_0$. Hence we study integral curves emanating from $p_0$. In order to prove the completeness, we construct a compact invariant set in Section \ref{Completensesetotal} that contains $p_0$ in its boundary and traps the integral curve initially.

By Remark \ref{symmetry} and Remark \ref{symmetry 2}, its two other counterparts $p_0'=\left(0,\frac{1}{d},0,\frac{1}{d},0,\frac{1}{d}\right)$ and $ p_0''=\left(0,0,\frac{1}{d},\frac{1}{d},\frac{1}{d},0\right)$ also have the similar geometric meaning depending on how $H$ is embedded in $G$.
\item $p_1:=\left(\frac{1}{n},\frac{1}{n},\frac{1}{n},\frac{1}{n}\sqrt{\frac{n-1}{a-b}},\frac{1}{n}\sqrt{\frac{n-1}{a-b}},\frac{1}{n}\sqrt{\frac{n-1}{a-b}}\right)$\\
This critical point is symmetric among all $(X_j,Z_j)$'s. Note that $\frac{f_j^2}{f_k^2}(p_1)=\frac{Z_j}{Z_k}(p_1)=1$, all $f_j$'s are equal at this point. We prove in Section \ref{aymptoticlimit} that $p_1$ represents an AC end for the complete Ricci-flat metric represented by the integral curve emanating from $p_0$. The conical limit is a metric cone over a suitable multiple of the normal Einstein metric on $G/K$.

\item
$p_2:=\left(\frac{1}{n},\frac{1}{n},\frac{1}{n},\frac{2b}{d-1}\frac{1}{n}\sqrt{\frac{(n-1)(d-1)}{b(a+2b)}},\frac{2b}{d-1}\frac{1}{n}\sqrt{\frac{(n-1)(d-1)}{b(a+2b)}},\frac{1}{n}\sqrt{\frac{(n-1)(d-1)}{b(a+2b)}}\right)
$\\
Since $r_j(p_2)$ are all equal, this point represent an invariant Einstein metric on $G/K$ other than the one represented by $p_1$. In the following text, we call the metric the ``alternative Einstein metric''.  For Case I, it is a K\"ahler--Einstein metric. It has two other counterparts with permuted $Z_j$'s.

Although we do not find any integral curve with its limit as $p_2$, we show in Section \ref{a digression} that there exists an integral curve emanating from $p_2$ and tends to $p_1$, representing a singular Ricci-flat metric with a conical singularity and an AC end.
\end{itemize}
\end{remark}

The linearization $\mathcal{L}$ of vector field $V$ in \eqref{new Ricci-flat} is 
\begin{equation}
\label{linearization}
\begin{bmatrix}
\mathcal{G}-1+2dX_1^2&  2dX_1X_2& 2dX_1X_3& 2bZ_1& aZ_3-2bZ_2& aZ_2-2bZ_3\\
2dX_1X_2&\mathcal{G}-1+2dX_2^2& 2dX_2X_3& aZ_3-2bZ_1& 2bZ_2& aZ_1-2bZ_3\\
2dX_1X_3& 2dX_2X_3& \mathcal{G}-1+2dX_3^2& aZ_2-2bZ_1& aZ_1-2bZ_2& 2bZ_3\\
(2dX_1+1)Z_1& (2dX_2-1)Z_1& (2dX_3-1)Z_1& \mathcal{G}-\frac{\mathcal{H}}{d}+2X_1& 0& 0\\
(2dX_1-1)Z_2& (2dX_2+1)Z_2& (2dX_3-1)Z_2& 0& \mathcal{G}-\frac{\mathcal{H}}{d}+2X_2& 0\\
(2dX_1-1)Z_3& (2dX_2-1)Z_3& (2dX_3+1)Z_3& 0& 0& \mathcal{G}-\frac{\mathcal{H}}{d}+2X_3
\end{bmatrix}
\end{equation}

With \eqref{linearization} we can compute the dimension of the unstable subspace at $p_0$. As we are considering system \eqref{new Ricci-flat} on $C\cap \{\mathcal{H}\equiv 1\}$, we require each unstable eigenvector to be tangent to $C\cap \{\mathcal{H}\equiv 1\}$. The normal vector field to the hypersurfaces $C$ and $\{\mathcal{H}\equiv 1\}$ are respectively
\begin{equation}
\label{normal vector}
N_C=\begin{bmatrix}
2dX_1\\
2dX_2\\
2dX_3\\
adZ_2+adZ_3-2bdZ_1\\
adZ_1+adZ_3-2bdZ_2\\
adZ_2+adZ_1-2bdZ_3
\end{bmatrix}, \quad N_{\{\mathcal{H}\equiv 1\}}=\begin{bmatrix}
1\\
1\\
1\\
0\\
0\\
0
\end{bmatrix}.
\end{equation}
\begin{lemma}
\label{unstable dimension}
The unstable subspace of system \eqref{new Ricci-flat} at $p_0$, restricted on $C\cap \{\mathcal{H}\equiv 1\}$, is of dimension 2.
\end{lemma}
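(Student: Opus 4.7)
The plan is to substitute $p_0=(1/d,0,0,0,1/d,1/d)$ into the matrix $\mathcal{L}$ from \eqref{linearization} and exploit the $\mathbb{Z}_2$ involution $\sigma$ that swaps $(X_2,Z_2)\leftrightarrow(X_3,Z_3)$---which fixes $p_0$ and commutes with the vector field \eqref{new Ricci-flat}---to split the eigenvalue count into two small pieces. Using $\mathcal{G}(p_0)=1/d$, $\mathcal{H}(p_0)=1$, and the identity $a-2b=d-1$ from Remark~\ref{scalr of spher basic ineq}, every entry of $\mathcal{L}(p_0)$ becomes explicit, and since $\mathcal{L}(p_0)$ commutes with $\sigma$ the ambient $\mathbb{R}^6$ splits orthogonally into a symmetric $4$-dimensional sector (perturbations with $y_2=y_3$ and $z_2=z_3$) and an antisymmetric $2$-dimensional sector (perturbations of the form $(0,u,-u,0,v,-v)$).

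The antisymmetric sector will be the easy half. Both normal vectors $N_C(p_0)=(2,0,0,2a,d-1,d-1)$ and $N_{\{\mathcal{H}\equiv 1\}}=(1,1,1,0,0,0)$ are $\sigma$-invariant, so the antisymmetric sector lies automatically in $T_{p_0}(C\cap\{\mathcal{H}\equiv 1\})$. A direct reading of \eqref{linearization} produces the $2\times 2$ block
\[
\begin{pmatrix} 1/d-1 & 4b/d \\ 2/d & 0 \end{pmatrix},
\]
whose determinant $-8b/d^{2}$ is negative, contributing exactly one positive eigenvalue. For the symmetric sector, parametrize by $(y_1,y_2,z_1,z_2)$; the two conditions $N_{\{\mathcal{H}\equiv 1\}}\cdot v=0$ and $N_C\cdot v=0$ become $y_1+2y_2=0$ and $y_1+az_1+(d-1)z_2=0$, cutting the sector down to a $2$-dimensional subspace parametrized by $(y_1,z_2)$. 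A short computation shows that the restricted linearization acts as
\[
\begin{pmatrix} 3/d-1 & 2(d-1)/d \\ 1/d & 0 \end{pmatrix},
\]
whose determinant $-2(d-1)/d^{2}$ is again negative for $d\geq 2$, yielding one more positive eigenvalue. Summing the two sectors will give $\dim E^{u}=2$, as claimed.

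The main (though routine) technical point to verify is that the two linear tangency conditions are actually preserved under the symmetric-sector linearization, so that the restriction to $(y_1,z_2)$ is well-defined. This is forced abstractly by the flow-invariance of $C$ and $\{\mathcal{H}\equiv 1\}$ together with $p_0$ being a critical point, and also follows from a quick direct substitution that relies on $a-2b=d-1$ and on the identity $(3/d-1)+(d-1)/d=2/d$. Beyond this bookkeeping, the proof reduces to an elementary eigenvalue calculation.
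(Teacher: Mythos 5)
Your argument is correct: I checked both $2\times 2$ blocks against the paper's matrix \eqref{linerization matrix <} and they are exactly right (the antisymmetric block has trace $\tfrac{1}{d}-1$ and determinant $-\tfrac{8b}{d^2}$, matching the paper's eigenvalue pair $\tfrac{1}{d}$, $-1$ since $8b=d$ in all three cases; the symmetric block has trace $\tfrac{3}{d}-1$ and determinant $-\tfrac{2(d-1)}{d^2}$, matching the pair $\tfrac{2}{d}$, $\tfrac{1}{d}-1$), and the invariance of the two tangency conditions under the restricted linearization is indeed forced by flow-invariance of $C\cap\{\mathcal{H}\equiv 1\}$ and also verifies directly. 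The route is genuinely different from the paper's: the paper diagonalizes the full $6\times 6$ matrix, exhibits all six eigenvalues with explicit eigenvectors, identifies $T_{p_0}(C\cap\{\mathcal{H}\equiv 1\})$ as a span of eigenvectors, and reads off the two unstable directions; you instead exploit the $(X_2,Z_2)\leftrightarrow(X_3,Z_3)$ involution to split into $\sigma$-isotypic sectors and count unstable dimensions by determinant signs, never computing an eigenvector. Your method is cleaner for the dimension count alone and makes transparent why the antisymmetric unstable direction survives the tangency constraints. The trade-off is that the paper's explicit spectral data is not discardable overhead: the eigenvectors $v_1$ and $(d+1)v_3-av_2$ with their growth rates $e^{\eta/d}$ and $e^{2\eta/d}$ are reused verbatim in the linearized solution \eqref{linearized solution<}, which drives the parametrization by $s_1$ and the estimates of Section \ref{Entrance Zone} (e.g.\ Proposition \ref{initial trap}); so if your proof replaced the paper's, that data would have to be computed elsewhere anyway.
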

\begin{proof}
Hence the linearization at $p_0$ is  
\begin{equation}
\label{linerization matrix <}
\mathcal{L}(p_0)=\begin{bmatrix}
\frac{3}{d}-1&0&0&0&\frac{a-2b}{d}&\frac{a-2b}{d}\\
0&\frac{1}{d}-1&0&\frac{a}{d}&\frac{2b}{d}&-\frac{2b}{d}\\
0&0&\frac{1}{d}-1&\frac{a}{d}&-\frac{2b}{d}&\frac{2b}{d}\\
0&0&0&\frac{2}{d}&0&0\\
\frac{1}{d}&\frac{1}{d}&-\frac{1}{d}&0&0&0\\
\frac{1}{d}&-\frac{1}{d}&\frac{1}{d}&0&0&0
\end{bmatrix}.
\end{equation}
Eigenvalues and corresponding eigenvectors of \eqref{linerization matrix <} are
$$
\lambda_1=\frac{1}{d},\quad \lambda_2=\lambda_3=\frac{2}{d},\quad\lambda_4=\lambda_5=\frac{1}{d}-1,\quad \lambda_6=-1.
$$
\begin{equation}
\label{list of eigen}
v_1=\begin{bmatrix}
0\\
-1\\
1\\
0\\
-2\\
2
\end{bmatrix},\quad
v_2=\begin{bmatrix}
2\\
0\\
0\\
0\\
1\\
1
\end{bmatrix},\quad v_3=\begin{bmatrix}
0\\
\frac{a}{d+1}\\
\frac{a}{d+1}\\
1\\
0\\
0
\end{bmatrix},\quad
v_4=\begin{bmatrix}
1-d\\
0\\
0\\
0\\
1\\
1
\end{bmatrix},\quad 
v_5=\begin{bmatrix}
0\\
1\\
1\\
0\\
0\\
0
\end{bmatrix},\quad
v_6=\begin{bmatrix}
0\\
4b\\
-4b\\
0\\
-1\\
1
\end{bmatrix}.
\end{equation}

With Remark \ref{scalr of spher basic ineq}, Remark \ref{smooth conservation} and \eqref{normal vector}, it is clear that
$$T_{p_0}(C\cap \{\mathcal{H}\equiv 1\})=\spann\{v_1,(d+1)v_3-av_2,2v_4+(d-1)v_5,v_6\}.$$
By \eqref{list of eigen}, an unstable subspace at $p_0$ is spanned by $v_1$ and $(d+1)v_3-av_2$. 
\end{proof}

Solutions of the linearized equations at $p_0$ have the form
\begin{equation}
\label{linearized solution<}
\begin{split}
\begin{bmatrix}
X_1\\
X_2\\
X_3\\
Z_1\\
Z_2\\
Z_3
\end{bmatrix}&=p_0+s_0e^{\frac{2\eta}{d}}((d+1)v_4-av_3)+s_1 e^{\frac{\eta}{d}}v_1=\begin{bmatrix}
\frac{1}{d}\\
0\\
0\\
0\\
\frac{1}{d}\\
\frac{1}{d}
\end{bmatrix}+s_0e^{\frac{2\eta}{d}}\begin{bmatrix}
-2a\\
a\\
a\\
d+1\\
-a\\
-a
\end{bmatrix}
+s_1 e^{\frac{\eta}{d}}\begin{bmatrix}
0\\
-1\\
1\\
0\\
-2\\
2
\end{bmatrix},
\end{split}
\end{equation}
for some $s_0>0$ and $s_1\in\mathbb{R}$. Recall Remark \ref{symmetry 2}. In order to let $Z_1$ be positive initially, the assumption $s_0>0$ is necessary.

It is clear that there is a 1 to 1 correspondence between the germ of linearized solution \eqref{linearized solution<} around $p_0$ and $[s_0:s_1^2]$ in $\mathbb{RP}^2$. We fix $s_0>0$ in the following text. By Hartman--Grobman theorem, there is a 1 to 1 correspondence between each \eqref{linearized solution<} and local solution to \eqref{new Ricci-flat}. Hence for a fixed $s_0>0$, there is no ambiguity to use $\gamma_{s_1}$ to denote an integral curve to system \eqref{new Ricci-flat} on \eqref{conservation law} with 
$$
\gamma_{s_1}\sim p_0+s_0e^{\frac{2\eta}{d}}((d+1)v_4-av_3)+s_1 e^{\frac{\eta}{d}}v_1
$$
near $p_0$.

Analysis above shows that there exists a 1-parameters family of short-time existing integral curves of system \eqref{new Ricci-flat} on \eqref{conservation law}. 
Since each curve corresponds to a homothetic class of Ricci-flat metrics defined on a neighborhood around singular orbit $G/H$, there exists a 1-parameters family of non-homothetic Ricci-flat metrics defined on a neighborhood around $G/H$. Recall Remark \ref{already}, the result is consistent with the main theorem in \cite{eschenburg2000initial}.
\begin{remark}
\label{y a subcase}

By the unstable version of Theorem 4.5 in \cite{coddington1955theory}, from \eqref{old initial} we know that
\begin{equation}
\label{s1ish1}
\frac{2h_1}{\sqrt{d}}=\lim\limits_{t\to 0}\frac{\left(\frac{\dot{f_3}}{f_3}-\frac{\dot{f_2}}{f_2}\right)\sqrt{f_2f_3}}{\sqrt{\trace(L)f_1}}=\lim_{\eta\to \infty}\frac{X_3-X_2}{\sqrt{Z_1}}=\dfrac{2s_1}{\sqrt{(d+1)s_0}}.
\end{equation}
Hence the parameter $s_1$ vanishes if and only if $h_1$ does. The solution with $s_1=0$ corresponds to the subsystem of \eqref{new Ricci-flat} where $(X_2,Z_2)\equiv (X_3,Z_3)$ is imposed, which corresponds to the subsystem of the original system \eqref{Ricci-flat1} where $f_2\equiv f_3$ is imposed. The reduced system is essentially the same as the one for the case where the isotropy representation has two inequivalent irreducible summands. For Case I, $\gamma_{0}$ represents the smooth complete $G_2$ metric in \cite{bryant1989construction}\cite{gibbons1990Einstein}. For Case II and Case III, Ricci-flat metrics with $s_1=0$ are proved to be complete in \cite{bohm_inhomogeneous_1998}\cite{wink2017cohomogeneity}.

Our construction does not assume the vanishing of $s_1$. By the symmetry of the ODE system, we mainly focus on the situation where $s_1\geq 0$ without loss of generality.
\end{remark}

Suppose an integral curve $\gamma_{s_1}$ is defined on $\mathbb{R}$, then by Lemma 5.1 in \cite{buzano2015family}, functions $f_j(t)$'s are defined on $[0,\infty)$. Therefore, Theorem \ref{main 1} is proved once $\gamma_{s_1}$ is shown to be defined on $\mathbb{R}$.

\section{Completeness}
\label{Completensesetotal}
With smooth extension of metrics represented by $\gamma_{s_1}$ proved, the next step is to show that $\gamma_{s_1}$ is defined on $\mathbb{R}$ so that the Ricci-flat metric it represents is complete. Our construction is divided into two parts. The first part is to find an appropriate compact invariant set $\hat{S}_3$ with $p_0$ sitting on its boundary. Although $p_0$ is in the boundary of $\hat{S}_3$, integral curves are not trapped in the set initially unless $s_1=0$. In the second step, we construct another compact set that serves as an \emph{entrance zone}. It traps $\gamma_{s_1}$ initially as long as $s_1$ is close enough to zero. Moreover, integral curves trapped in this set cannot escape through some part of its boundary and they are forced to enter $•\hat{S}_3$. Hence such a $\gamma_{s_1}$ must be defined on $\mathbb{R}$.

\subsection{Compact Invariant Set}
\label{Completensese}

We describe the first step in this section. There is a subtle difference between the compact invariant set for Case I and ones for Case II and III. We first construct the set for Case II and III since it is simpler.

Let $\rho=\sqrt{\frac{a+2b}{2}}$. It is clear that $\rho\geq 1$ and equality holds exactly in Case I. Define
\begin{equation}
\label{set S3's friends}
\begin{split}
&P=\left\{Z_1,Z_2,Z_3\geq 0\right\}\\
&\tilde{S_3}=\bigcap_{j=1}^2 \left\{Z_3-Z_j\geq 0,\quad X_3-X_j+\rho(Z_3-Z_j)\geq 0,\quad X_3\geq 0\right\}.
\end{split}
\end{equation}
And define 
\begin{equation}
\label{Set S3}
S_3=C\cap \{\mathcal{H}\equiv 1\}\cap P\cap \tilde{S_3}.
\end{equation}

Before doing further analysis on $S_3$, we give some explanations as to why it is constructed in this way. Note that the positivity of $Z_j$'s are immediate by Remark \ref{symmetry 2}. The first inequality in $\tilde{S_3}$ is to require $Z_3$ to be the largest variable among $Z_j$'s. Equivalently, it requires $f_3$ to be the largest among $f_j$'s in the original coordinate. This condition is indicated by the subscript of $\tilde{S}_3$ and $S_3$. A direct consequence of this assumption is that we can assume $X_3\geq 0$ along $\gamma_{s_1}$ as shown in \eqref{ricci3}.

It is easy to check that $p_0\in S_3$ hence the set is nonempty. Each inequality in \eqref{Set S3} defines a closed subset in $\mathbb{R}^7$ whose boundary is defined by the equality. Therefore, a point $x\in \partial S_3$ if there exists at least one defining inequality in \eqref{set S3's friends} reaches equality at $x$. For Case II and III, functions
\begin{equation}
\label{where p_0 locate}
X_3, Z_1, Z_3-Z_2, X_3-X_2+\rho(Z_3-Z_2)
\end{equation}
among those in \eqref{set S3's friends} vanish at $p_0$. The point is hence in $\partial S_3$.  Substitute \eqref{linearized solution<} to functions in \eqref{where p_0 locate}. It is clear that $\gamma_{s_1}$ is trapped in $S_3$ initially if $s_1\geq 0$. By Remark \ref{y a subcase}, we know that $\gamma_0$ is trapped in $\partial S_3$ with $(X_2,Z_2)\equiv (X_3,Z_3)$. 

\begin{proposition}
\label{sharp}
In the set $S_3\cap\{2bZ_3-a(Z_1+Z_2)\leq 0\}$, we have estimate
\begin{equation}
\label{sharp Z_1+Z_2}
Z_1+Z_2\leq 2\sqrt{\frac{n-1}{n^2(a-b)}}.
\end{equation}
\end{proposition}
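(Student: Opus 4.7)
\emph{Proof proposal.} My plan is to bound $d\sum_j \mathcal{R}_j$ from above by combining the conservation law with Cauchy--Schwarz, and then to prove a matching lower bound on $\sum_j \mathcal{R}_j$ in terms of $(Z_1+Z_2)^2$ that exploits both the defining inequalities of $S_3$ and the extra hypothesis. In all three cases $n = 3d$. Since $\mathcal{H} \equiv 1$ forces $X_1+X_2+X_3 = 1/d$, Cauchy--Schwarz gives $\sum_j X_j^2 \geq 1/(3d^2)$, hence $\mathcal{G} \geq 1/n$. The conservation law $\mathcal{G} - 1 + d\sum_j \mathcal{R}_j = 0$ then yields
\begin{equation*}
d\sum_j \mathcal{R}_j \;\leq\; \frac{n-1}{n}.
\end{equation*}

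The technical core will be the sharp lower estimate
\begin{equation*}
4\sum_j \mathcal{R}_j \;\geq\; 3(a-b)(Z_1+Z_2)^2 \quad \text{on } S_3 \cap \{2bZ_3 - a(Z_1+Z_2)\leq 0\}.
\end{equation*}
I would set $u = Z_1+Z_2$, $v = Z_3$, $\Delta = Z_1 - Z_2$, so that $4Z_1 Z_2 = u^2 - \Delta^2$. Expanding $\sum_j \mathcal{R}_j = aZ_1Z_2 + a(Z_1+Z_2)Z_3 - b(Z_1^2 + Z_2^2 + Z_3^2)$ rewrites the inequality as
\begin{equation*}
\phi(v) \;:=\; -4bv^2 + 4auv - (2a-b)u^2 \;\geq\; (a+2b)\Delta^2.
\end{equation*}
Here $\phi$ is a concave parabola in $v$ with vertex at $v^{\ast} := au/(2b)$. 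The hypothesis $2bZ_3 \leq a(Z_1+Z_2)$ says exactly $v \leq v^{\ast}$, while membership in $S_3$ forces $v \geq \max(Z_1,Z_2) = (u+|\Delta|)/2$. Thus $v$ lies on the non-decreasing branch of $\phi$, and the minimum of $\phi$ over the admissible interval occurs at the left endpoint.

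A short computation will give $\phi((u+|\Delta|)/2) = 2(a-b)u|\Delta| - b\Delta^2$, so the desired inequality reduces (in the nontrivial case $\Delta \neq 0$, after dividing by $|\Delta|$) to $|\Delta| \leq \frac{2(a-b)}{a+3b}\,u$. Since $|\Delta| \leq u$ trivially, it suffices that $2(a-b) \geq a+3b$, i.e.\ $a \geq 5b$, which is guaranteed by Remark~\ref{scalr of spher basic ineq}'s inequality $a \geq 6b$. Combining the two bounds and using $n = 3d$ turns $3(a-b)(Z_1+Z_2)^2 \leq 4(n-1)/(nd) = 12(n-1)/n^2$ into the claimed \eqref{sharp Z_1+Z_2}.

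The hard part will be the lower bound: the final estimate is saturated at $p_1$, where $\mathcal{G} = 1/n$, all $Z_j$ are equal, and both $v = (u+|\Delta|)/2$ and $\Delta = 0$ hold simultaneously, so every constraint must be used at its sharp level. The decisive observation is that $2bZ_3 \leq a(Z_1+Z_2)$ is precisely the condition placing $Z_3$ on the increasing side of $\phi$, which lets the minimum be taken at the natural $S_3$-boundary $Z_3 = \max(Z_1, Z_2)$; the sphere-curvature inequality $a \geq 6b$ from Remark~\ref{scalr of spher basic ineq} then provides exactly the slack needed to absorb the $\Delta$-term.
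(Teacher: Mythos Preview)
Your proof is correct and follows essentially the same strategy as the paper: bound $d\sum_j\mathcal{R}_j$ above via the conservation law and $\mathcal{G}\geq 1/n$, minimize the concave quadratic in $Z_3$ at the left endpoint $Z_3=\max(Z_1,Z_2)$ using the hypothesis $2bZ_3\leq a(Z_1+Z_2)$, and then show the worst case occurs when $Z_1=Z_2$. The only difference is cosmetic: the paper parametrizes by $\nu=Z_1/Z_2$ and minimizes an auxiliary function $h$ over $[1/2,1]$, whereas your $(u,\Delta)$ variables reduce the last step cleanly to the inequality $a\geq 5b$, which is immediate from $a\geq 6b$.
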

\begin{proof}
By the conservation law \eqref{conservation law}, it follows that 
\begin{equation}
\label{1}
0\geq \frac{1}{n}-1+da(Z_2Z_3+Z_1Z_3+Z_1Z_2)-db(Z_1^2+Z_2^2+Z_3^2).
\end{equation}
Note that the RHS of \eqref{1} is symmetric between $Z_1$ and $Z_2$. It is convenient to find the maximum of $Z_1+Z_2$ on $S_3\cap \{Z_2\geq Z_1\}$ first. By the symmetry between $Z_1$ and $Z_2$ in \eqref{1}, such a maximum is the maximum of $Z_1+Z_2$ in $S_3$. With the assumption $Z_2\geq Z_1$, we write $Z_1=\nu Z_2$ for some $\nu\in [0,1]$. Fix such a $\nu$. Then \eqref{1} becomes
\begin{equation}
\label{2}
\begin{split}
0&\geq \frac{1}{n}-1+da(Z_2Z_3+\nu Z_2Z_3+\nu Z_2^2)-db(\nu^2 Z_2^2+Z_2^2+Z_3^2)\\
&= \frac{1}{n}-1+d(-bZ_3^2+a(1+\nu)Z_2Z_3+(a\nu  -b(1+\nu^2))Z_2^2).
\end{split}
\end{equation}
Define $\mathcal{F}(Z_3)=-bZ_3^2+a(1+\nu)Z_2Z_3+(a\nu-b(1+\nu^2))Z_2^2.$ Consider the set $S_3\cap\{2bZ_3-a(Z_1+Z_2)\leq 0\}\cap \{Z_1=\nu Z_2\}$, we have 
$$
Z_2\leq Z_3\leq \frac{a}{2b}(1+\nu)Z_2.
$$
Hence for any fixed $\nu$ and $Z_2$, the minimum of $\mathcal{F}$ in $S_3\cap\{2bZ_3-a(Z_1+Z_2)\leq 0\}\cap \{Z_1=\nu Z_2\}$ is reached at $Z_3=Z_2$. Therefore, computation \eqref{2} continues as
\begin{equation}
\label{3}
\begin{split}
0&\geq \frac{1}{n}-1+d\left(-b+a(1+\nu)+(\nu a-b(1+\nu^2))\right)Z_2^2\\&=\frac{1}{n}-1+d\left(-b\nu^2+2a\nu +a-2b\right)Z_2^2.
\end{split}
\end{equation}
The coefficient of $Z_2^2$ in \eqref{3} can be easily checked to be positive. It follows that
\begin{equation}
\begin{split}
\label{4}
(Z_1+Z_2)^2=(1+\nu)^2Z_2^2&\leq \left(1-\frac{1}{n}\right)\dfrac{(1+\nu)^2}{d(-b\nu^2+2a\nu +a-2b)}\\
&=\left(1-\frac{1}{n}\right)\dfrac{1}{d\left(-b+2(a+b)\frac{1}{1+\nu}-(a+3b)\frac{1}{(1+\nu)^2}\right)}.
\end{split}
\end{equation}
Consider function $h\left(\frac{1}{1+\nu}\right)=-(a+3b)\frac{1}{(1+\nu)^2}+2(a+b)\frac{1}{1+\nu}-b$. Since by Remark \ref{scalr of spher basic ineq}, we have
$
\frac{1}{2}\leq \frac{a+b}{a+3b}\leq 1,
$
the minimum of $h$ is either $h\left(\frac{1}{2}\right)$ or $h(1)$. Computation shows $h\left(\frac{1}{2}\right)<h(1)$. We conclude that
$
(Z_1+Z_2)^2\leq \left(1-\frac{1}{n}\right)\frac{1}{d}\frac{1}{h\left(\frac{1}{2}\right)}=4\frac{n-1}{n^2(a-b)}.
$
Hence the proof is complete. Note that the equality in \eqref{sharp Z_1+Z_2} is reached by $p_1$.
\end{proof}

\begin{proposition}
\label{S_3}
For Case II and III, integral curves $\gamma_{s_1}$ to system \eqref{new Ricci-flat} on $C_0\cap \{\mathcal{H}\equiv 1\}$ emanating from $p_0$ with $s_1\geq 0$ do not escape $S_3$.
\end{proposition}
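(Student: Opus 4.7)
The plan is to perform a face-by-face analysis of $\partial S_3$ and show that on every boundary point the vector field $V$ of \eqref{new Ricci-flat} either points into $S_3$ or is tangent to the face. Combined with the fact, visible from the linearization \eqref{linearized solution<}, that $\gamma_{s_1}$ enters $S_3$ for $\eta$ sufficiently negative whenever $s_1\ge 0$, the standard barrier argument then forces $\gamma_{s_1}$ to stay in $S_3$ throughout its maximal interval of existence.

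The easy faces I would dispatch first. The three faces $\{Z_j=0\}$ are flow-invariant since $Z_j$ factors out of $Z_j'$ in \eqref{new Ricci-flat}. On the faces $\{Z_3-Z_j=0\}$, $j=1,2$, the derivative collapses to $2Z_3(X_3-X_j)$, and the surviving defining inequality of $\tilde S_3$ reduces to $X_3\ge X_j$ exactly on this face, giving the correct sign. On $\{X_3=0\}$, one has $X_3'=\mathcal{R}_3$, which I would rewrite as
\[
\mathcal{R}_3=(a-2b)Z_1Z_2+b(Z_3+Z_2-Z_1)(Z_3+Z_1-Z_2);
\]
both linear factors are non-negative because $Z_3$ dominates $Z_1,Z_2$ on $S_3$, and $a-2b=d-1\ge 0$ by Remark \ref{scalr of spher basic ineq}.

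The heart of the argument is the pair of faces $F_j:=\{X_3-X_j+\rho(Z_3-Z_j)=0\}$, $j=1,2$. I would differentiate the defining function along $V$, substitute $X_3-X_j=-\rho(Z_3-Z_j)$ together with $\mathcal{H}\equiv 1$, and use the factorization $\mathcal{R}_3-\mathcal{R}_j=(Z_3-Z_j)(2b(Z_3+Z_j)-aZ_k)$. The tuning $2\rho^2=a+2b$ (the very reason $\rho$ was chosen this way) is then designed to cancel the residual $Z_j^2$ contribution, producing
\[
(X_3-X_j+\rho(Z_3-Z_j))'=(Z_3-Z_j)\left[\tfrac{\rho(d-1)}{d}-a(Z_1+Z_2)+2bZ_3+2\rho X_3\right].
\]
Since $Z_3-Z_j\ge 0$ and $X_3\ge 0$ on $S_3$, it suffices to show the bracket is non-negative. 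If $2bZ_3-a(Z_1+Z_2)\ge 0$ this is immediate. Otherwise I would invoke Proposition \ref{sharp} on precisely this subregion to get $Z_1+Z_2\le 2\sqrt{(n-1)/(n^2(a-b))}$, reducing the question to the purely numerical inequality
\[
\tfrac{\rho(d-1)}{d}\ge 2a\sqrt{\tfrac{n-1}{n^2(a-b)}},\qquad\text{i.e.}\qquad (a+2b)(a-b)n^2(d-1)^2\ge 8a^2 d^2(n-1).
\]

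The main obstacle is verifying this last inequality, since it is the step where Cases II and III behave differently from Case I. A direct substitution from Table \ref{constant} shows it holds for Case II ($d=4,n=12,a=4,b=\tfrac{1}{2}$) and Case III ($d=8,n=24,a=9,b=1$), but fails for Case I — precisely why the paper flags that the invariant set requires modification there. The delicate point is the sharpness on both sides: a smaller $\rho$ would not cancel the $Z_j^2$ term in the bracket, while a larger $\rho$ would push $\gamma_{s_1}$ off $S_3$ at $p_0$ initially. Once the face analysis is complete, the proposition follows by the usual barrier argument applied at any purported first exit time $\eta^{\ast}\in\mathbb{R}$: some defining function of $S_3$ would vanish at $\gamma_{s_1}(\eta^{\ast})\in\partial S_3$ and then turn strictly negative, which the signs computed above on each face rule out.
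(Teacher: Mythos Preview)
Your proposal is correct and follows essentially the same face-by-face strategy as the paper: invariance of the $\{Z_j=0\}$ faces, the reduction $(Z_3-Z_j)'=2Z_3(X_3-X_j)$ on $\{Z_3=Z_j\}$, non-negativity of $\mathcal{R}_3$ on $\{X_3=0\}$, and the key computation on the faces $\{X_3-X_j+\rho(Z_3-Z_j)=0\}$ where the choice $2\rho^2=a+2b$ cancels the quadratic $Z_j$ term, followed by invoking Proposition~\ref{sharp} on the subregion $\{2bZ_3\le a(Z_1+Z_2)\}$.

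The only substantive difference is in the final numerical reduction. The paper first uses $Z_3\ge\tfrac{1}{2}(Z_1+Z_2)$ to replace $2bZ_3-a(Z_1+Z_2)$ by $(b-a)(Z_1+Z_2)$, arriving at the condition $\tfrac{\rho(d-1)}{d(a-b)}\ge 2\sqrt{\tfrac{n-1}{n^2(a-b)}}$. You instead discard $2bZ_3$ altogether, which yields the strictly stronger requirement $\tfrac{\rho(d-1)}{d}\ge 2a\sqrt{\tfrac{n-1}{n^2(a-b)}}$. Your inequality still holds in Cases II and III (in Case II only barely: $22680>22528$ after clearing denominators), so the argument goes through; but the paper's intermediate step gives a more comfortable margin and makes clearer why Case I is the borderline case.
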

\begin{proof}
Two perspectives can be taken in the following computations that frequently appear through out this article. First is to view algebraic expressions in \eqref{set S3's friends} as functions along $\gamma_{s_1}$ and they all vanish at $p_0$. Integral curves emanating from $p_0$ being trapped in $S_3$ initially is equivalent to these defining functions being positive near $p_0$. To show that $\gamma_{s_1}$ does not escape $S_3$ is to show the non-negativity of these functions along the integral curves. Suppose one of these functions vanishes at some point along the integral curves for the first time. We want to show that its derivative at that point is non-negative.

The second perspective is to consider $\partial S_3$ as a union of subsets of a collection of linear and quadratic varieties. Require the restriction of the vector field $V$ in \eqref{new Ricci-flat} on each of these subsets to point inward $S_3$. If such a requirement is met, then it is impossible for the integral curves to escape if they are initially in $S_3$. Both perspectives lead to the same computation of inner product between $V$ and the gradient of each defining function in \eqref{set S3's friends}. Then require the inner product to be non-negative if the gradient points inward $S_3$. It might not be true that the inner product is non-negative on each variety globally. But all we need is the non-negativity on its subsets that $\partial S_3$ consists of.

By definition of $S_3$, we automatically have
\begin{equation}
\label{ricci3}
\begin{split}
\mathcal{R}_3&=aZ_1Z_2+b(Z_3^2-Z_1^2-Z_2^2)=\left\{\begin{array}{cc}
Z_2(aZ_1-bZ_2)+b(Z_3^2-Z_1^2)\geq 0& \text{if $Z_1\geq Z_2$}\\
Z_1(aZ_2-bZ_1)+b(Z_3^2-Z_2^2)\geq 0& \text{if $Z_2\geq Z_1$}
\end{array}\right..
\end{split}
\end{equation}
On $X_3=0$, we have
$
 \left. \langle\nabla(X_3),V\rangle \right|_{X_3=0}=\mathcal{R}_3\geq 0
$ by \eqref{ricci3}.
Hence $X_3$ is non-negative along every $\gamma_{s_1}$ that is trapped in $S_3$ initially.

Next we need to show that the integral curves cannot escape from the part of $\partial \tilde{S_3}$ that is in $\partial S_3$. For distinct $j,k\in\{1,2\}$, it follows that
\begin{equation*}
\begin{split}
\left. \langle\nabla(Z_3-Z_j),V\rangle \right|_{Z_3-Z_j=0}&=Z_3\left(\mathcal{G}-\frac{1}{d}+2X_3\right)-Z_j\left(\mathcal{G}-\frac{1}{d}+2X_j\right)\\
&=2Z_3(X_3-X_j) \quad \text{since $Z_3-Z_j=0$}\\
&\geq 2\rho Z_3(Z_j-Z_3) \quad \text{by definition of $S_3$}\\
&=0 \quad \text{since $Z_3-Z_j=0$}.
\end{split}
\end{equation*}

Although it is not clear if $X_3-X_j\geq 0$ along $\gamma_{s_1}$, we impose a weaker condition, which is the second inequality in $\tilde{S_3}$. What it means is to allow $Z_3-Z_j$ to decrease, yet the rate of its decreasing cannot be too steep so that $Z_3-Z_j$ increases before it could decrease to zero. Fortunately, the weaker condition does hold along the integral curves.

\begin{equation}
\label{X3-X_1+k(Z_3-Z_1)pre}
\begin{split}
&\left. \langle\nabla(X_3-X_j+\rho(Z_3-Z_j)),V\rangle \right|_{X_3-X_j+\rho(Z_3-Z_j)=0}\\
&=(X_3-X_j+\rho(Z_3-Z_j))\left(\mathcal{G}-1\right)+\mathcal{R}_3-\mathcal{R}_j+\rho Z_3\left(1-\frac{1}{d}+2X_3\right)-\rho Z_j\left(1-\frac{1}{d}+2X_j\right)\\
&= (Z_3-Z_j)\left(2b(Z_3+Z_j)-aZ_k+\rho\left(1-\frac{1}{d}\right)+2\rho X_3-2\rho^2Z_j\right) \quad \text{since $X_j=X_3+\rho(Z_3-Z_j)$}\\
&\geq (Z_3-Z_j)\left(2bZ_3-a(Z_j+Z_k)+\rho\left(1-\frac{1}{d}\right)\right)\quad \text{since $X_3\geq 0$ in $S_3$}\\
&= (Z_3-Z_j)\left(2bZ_3-a(Z_1+Z_2)+\rho\left(1-\frac{1}{d}\right)\right)
\end{split}.
\end{equation}
If $2bZ_3-a(Z_1+Z_2)\geq 0$, then the last line of computation above is obviously non-negative. If $2bZ_3-a(Z_1+Z_2)\leq 0$, then \eqref{X3-X_1+k(Z_3-Z_1)pre} continues as
\begin{equation}
\label{X3-X_1+k(Z_3-Z_1)}
\geq (Z_3-Z_j)\left((b-a)(Z_1+Z_2)+\rho\left(1-\frac{1}{d}\right)\right)
\end{equation}
since $Z_3\geq \frac{Z_1+Z_2}{2}$ in $S_3$. Apply Proposition \ref{sharp}, we know that \eqref{X3-X_1+k(Z_3-Z_1)} is non-negative if 
\begin{equation}
\label{i=1 fail}
\dfrac{\rho(d-1)}{d(a-b)}\geq 2\sqrt{\frac{n-1}{n^2(a-b)}}.
\end{equation}
Straightforward computations show that 
$$
\begin{tabular}{ l| l l l} 
\hline
Case & $\rho$  & $\dfrac{\rho(d-1)}{d(a-b)}$ & $2\sqrt{\frac{n-1}{n^2(a-b)}}$\\ 
\hline
I & 1  & $\frac{2}{5}$& $\frac{2}{3}$\\ 
\hline
II & $\sqrt{\frac{5}{2}}$  & $\frac{3\sqrt{10}}{28}\approx 0.339$ & $\frac{\sqrt{154}}{42}\approx 0.295$\\ 
\hline
III & $\sqrt{\frac{11}{2}}$  & $\frac{7\sqrt{22}}{128}\approx 0.257$ & $\frac{\sqrt{46}}{48}\approx 0.141$\\ 
\hline
\end{tabular}.
$$
Inequality \eqref{i=1 fail} holds only for Case II and III. 
Hence for Case II and III, integral curves $\gamma_{s_1}$ emanating from $p_0$ does not escape $S_3$ if $s_1\geq 0$.

Although estimate \eqref{sharp Z_1+Z_2} is sharp in $S_3$, inequality \eqref{X3-X_1+k(Z_3-Z_1)pre} has room to be improved as we dropped a non-negative term $2\rho X_3$ in the computation. It turns out \eqref{X3-X_1+k(Z_3-Z_1)pre} can be proved to be non-negative for Case I with an additional inequality, as demonstrated in Proposition \ref{T_3}.
\end{proof}

We move on to Case I. Recall that the construction in Proposition \ref{S_3} is not successful just because inequality \eqref{i=1 fail} does not hold in this case. To fix this issue, an additional inequality is needed. Define 
\begin{equation}
\label{G2condition}
F_j:=X_k+X_l-Z_j.
\end{equation}
Computations show
\begin{equation*}
\begin{split}
&\langle \nabla F_j,V\rangle=F_j\left(\mathcal{G}-1\right)+\dfrac{3Z_j}{2}\left(\dfrac{1}{3}F_j-F_k-F_l\right).
\end{split}
\end{equation*}
\begin{remark}
\label{G2}
The condition $F_1\equiv F_2\equiv F_3\equiv 0$ is in fact the $G_2$ condition on cohomogeneity one manifold with principal orbit $SU(3)/T^2$. Hence $\cap_{j=1}^3\{F_j\equiv 0\}$ is flow-invariant and it contains the integral curve $\gamma_0$ that represents the complete smooth $G_2$ metric on $M$, which is firstly discovered in \cite{bryant1989construction}\cite{gibbons1990Einstein}.
\end{remark}

In the following text, we still use $\tilde{S_3}$ and $S_3$ to denote invariant sets constructed. If necessary, we use the phrase such as ``$S_3$ for Case I'' to refer to the case in particular. Define
\begin{equation}
\label{set T3's friends}
\begin{split}
&\tilde{S_3}=\bigcap_{j=1}^2 \left\{Z_3-Z_j\geq 0,\quad F_j-F_3\geq 0, \quad X_3\geq 0\right\}\cap \{3F_1+3F_2-F_3\geq 0\}.
\end{split}
\end{equation}
And define 
\begin{equation}
\label{Set T3}
S_3=C\cap \{\mathcal{H}\equiv 1\}\cap P\cap \tilde{S_3}.
\end{equation}
Note that $F_j-F_3\geq 0$ is simply the second defining inequality in the $\tilde{S_3}$ in \eqref{set S3's friends} with $\rho =1$.

It is easy to check that $p_0\in S_3$ hence $S_3$ is nonempty. Since functions $X_3, Z_1, Z_3-Z_2, F_j-F_3$ and $3F_1+3F_2-F_3$
vanish at $p_0$ among those in \eqref{set T3's friends}, the point is in $\partial S_3$. With the same argument as the one for Case II and III, we know that $\gamma_{s_1}$ is trapped in $S_3$ initially if $s_1\geq 0$.

\begin{proposition}
\label{T_3}
Integral curves $\gamma_{s_1}$ to system \eqref{new Ricci-flat} on $C\cap \{\mathcal{H}\equiv 1\}$ emanating from $p_0$ with $s_1\geq 0$ do not escape $S_3$.
\end{proposition}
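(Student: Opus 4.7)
The plan is to follow the template of Proposition \ref{S_3}: verify that for every defining inequality $f \geq 0$ of $S_3$ in \eqref{set T3's friends}, the directional derivative $\langle \nabla f, V\rangle$ is non-negative on the face $\{f = 0\}\cap S_3$. The three checks $\mathcal{R}_3 \geq 0$, $X_3 \geq 0$, and $Z_3 - Z_j \geq 0$ go through verbatim as in the previous proposition, since they only require $\rho = 1$ together with $X_3 - X_j + (Z_3 - Z_j) \geq 0$, and the latter is exactly $F_j - F_3 \geq 0$ in the new definition of $\tilde{S}_3$.

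The first genuinely new input is needed on the face $F_j - F_3 = 0$. Specializing \eqref{X3-X_1+k(Z_3-Z_1)pre} to Case I (with $\rho = 1$, $d = 2$, $a = \frac{3}{2}$, $b = \frac{1}{4}$) and now retaining the $2\rho X_3$ term that was dropped in the Case II/III argument, the derivative becomes
\[
\langle \nabla(F_j - F_3),\, V\rangle\bigr|_{F_j - F_3 = 0} = \frac{1}{2}(Z_3 - Z_j)\bigl[\,4X_3 - 3(Z_1 + Z_2) + Z_3 + 1\,\bigr].
\]
Using $\mathcal{H} \equiv 1$, so that $X_1 + X_2 + X_3 = \frac{1}{2}$, the bracketed quantity equals exactly $3F_1 + 3F_2 - F_3$. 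Thus the face $\{F_j - F_3 = 0\}$ is preserved precisely because of the new defining inequality forced upon us in Case I.

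It then remains to verify that the auxiliary inequality $3F_1 + 3F_2 - F_3 \geq 0$ is itself flow-invariant. Using the formula for $\langle \nabla F_j, V\rangle$ stated just before Remark \ref{G2}, I take the linear combination $3\langle \nabla F_1, V\rangle + 3\langle \nabla F_2, V\rangle - \langle \nabla F_3, V\rangle$; the $(\mathcal{G} - 1)$ piece and the $\frac{F_3}{3} - F_1 - F_2$ piece both vanish on the face $F_3 = 3F_1 + 3F_2$, leaving
\[
-6\bigl[(2F_1 + 3F_2)Z_1 + (3F_1 + 2F_2)Z_2\bigr].
\]
The conditions $F_1 \geq F_3$ and $F_2 \geq F_3$ become $2F_1 + 3F_2 \leq 0$ and $3F_1 + 2F_2 \leq 0$ once $F_3 = 3F_1 + 3F_2$ is substituted, and combined with $Z_1, Z_2 \geq 0$ they make the displayed expression non-negative.

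The main obstacle I anticipate is identifying the right auxiliary inequality that rescues the Case II/III argument. The cancellations work out so cleanly only because $\rho = 1$, the trace condition $\mathcal{H} \equiv 1$, and the specific Case I constants conspire to turn the $2X_3$-term in \eqref{X3-X_1+k(Z_3-Z_1)pre} into $3F_1 + 3F_2 - F_3$ after dividing by $\frac{1}{2}(Z_3 - Z_j)$. Once the correct candidate is pinned down, everything reduces to the two linear inequalities $2F_1 + 3F_2 \leq 0$ and $3F_1 + 2F_2 \leq 0$ that are automatically supplied by the remaining defining conditions of $S_3$, and the verification closes up.
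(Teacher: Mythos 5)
Your proposal is correct and follows essentially the same route as the paper: the derivative on the face $\{F_j-F_3=0\}$ reduces to $\tfrac{1}{2}(Z_3-Z_j)(3F_1+3F_2-F_3)$ (you obtain this by specializing \eqref{X3-X_1+k(Z_3-Z_1)pre} with the $2\rho X_3$ term retained and using $\mathcal{H}\equiv 1$, while the paper computes directly from the formula for $\langle\nabla F_j,V\rangle$, but the identities are the same), and your verification that $3F_1+3F_2-F_3\geq 0$ is flow-invariant via $2F_1+3F_2\leq 0$ and $3F_1+2F_2\leq 0$ is exactly the paper's computation written as $6Z_1(F_1-F_3)+6Z_2(F_2-F_3)\geq 0$. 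All steps check out.
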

\begin{proof}
The idea of proving Proposition \ref{T_3} is the same as the one of Proposition \ref{S_3}. Besides, almost all computations for Proposition \ref{S_3} still hold except the one for $F_j-F_3\geq 0$ since \eqref{i=1 fail} is not true for Case I. With the additional inequality, it follows that
\begin{equation*}
\begin{split}
&\left.\langle\nabla (F_j-F_3),V\rangle\right|_{F_j-F_3=0}\\
&=(F_j-F_3)\left(\mathcal{G}-1\right)+\dfrac{3Z_j}{2}\left(\dfrac{1}{3}F_j-F_k-F_3\right)-\dfrac{3Z_3}{2}\left(\dfrac{1}{3}F_3-F_j-F_k\right)\\
&=\dfrac{3Z_j}{2}\left(\dfrac{1}{3}F_j-F_k-F_3\right)-\dfrac{3Z_3}{2}\left(\dfrac{1}{3}F_3-F_j-F_k\right)\quad \text{since $F_j=F_3$}\\
&=\dfrac{3Z_j}{2}\left(\dfrac{1}{3}F_j-F_k-F_j\right)-\dfrac{3Z_3}{2}\left(\dfrac{1}{3}F_j-F_j-F_k\right) \quad \text{since $F_j=F_3$}\\
&=F_k\dfrac{3}{2}(Z_3-Z_j)+F_j(Z_3-Z_j)\\
&=\dfrac{1}{2}(Z_3-Z_j)(3F_j+3F_k-F_3) \quad \text{since $F_j=F_3$}\\
&\geq 0.
\end{split}
\end{equation*}
Notice that we do not drop any non-negative term in the computation above like we do in \eqref{X3-X_1+k(Z_3-Z_1)pre}. The estimate for $\left.\langle\nabla (F_j-F_3),V\rangle\right|_{F_j-F_3=0}$ hence becomes sharper. Finally, we need to show that the additional inequality holds along the integral curves. Indeed, since
\begin{equation*}
\begin{split}
&\left.\langle\nabla (3F_1+3F_2-F_3),V\rangle\right|_{3F_1+3F_2-F_3=0}\\
&=(3F_1+3F_2-F_3)\left(\mathcal{G}-1\right)\\
&\quad +\dfrac{3Z_1}{2}\left(F_1-3F_2-3F_3\right)+\dfrac{3Z_2}{2}\left(F_2-3F_1-3F_3\right)-\dfrac{3Z_3}{2}\left(\dfrac{1}{3}F_3-F_1-F_2\right)\\
&=\dfrac{3Z_1}{2}\left(F_1-3F_2-3F_3\right)+\dfrac{3Z_2}{2}\left(F_2-3F_1-3F_3\right)\quad \text{since $3F_1+3F_2-F_3=0$}\\
&=\dfrac{3Z_1}{2}\left(4F_1-4F_3\right)+\dfrac{3Z_2}{2}\left(4F_2-4F_3\right)\quad \text{since $3F_1+3F_2-F_3=0$}\\
&\geq 0\quad \text{definition of $S_3$ for $i=1$}
\end{split},
\end{equation*}
$3F_1+3F_2-F_3$ remains non-negative along the integral curves. Therefore, integral curves $\gamma_{s_1}$ do not escape $S_3$ in Case I if $s_1\geq 0$.
\end{proof}

\begin{remark}
One may want to integrate the additional inequality in $S_3$ for Case I to the other two cases so that all cases can be discussed by a single construction. Specifically, one can define
$$
F_j:=X_k+X_l-\rho Z_j.
$$
Then the additional inequality analogous to $3F_1+3F_2-F_3\geq 0$ for Case II and III is $
aF_1+aF_2-2bF_3\geq 0.
$
But
\begin{equation*}
\begin{split}
&\left.\langle\nabla (aF_1+aF_2-2bF_3),V\rangle\right|_{aF_1+aF_2-2bF_3=0}\\
&=\frac{aZ_1}{k}(a+2b)(F_1-F_3)+\frac{aZ_2}{k}(a+2b)(F_2-F_3)+\dfrac{\zeta}{k}(aZ_1+aZ_2-2bZ_3)
\end{split},
\end{equation*}
where $\zeta=\frac{(3-d)a-(2+2d)b}{2d}\leq 0$. It only vanishes in Case I. Hence whether $aF_1+aF_2-2bF_3$ is non-negative along the integral curves in $S_3$ is not clear. The analogous $F_j$ defined for Case II and Case III may not have too much meaning after all because there is no special holonomy for odd dimension other than $7$.
\end{remark}

We are ready to construct the compact invariant set mentioned at the beginning of this section. Define 
$$
\hat{S}_3=S_3\cap \{Z_1+Z_2-Z_3\geq 0\}\cap\{Z_1(X_1-X_3)+Z_2(X_2-X_3)\geq 0\}
$$
for all three cases. We have the following lemma.
\begin{lemma}
\label{never escape}
$\hat{S}_3$ is a compact invariant set.
\end{lemma}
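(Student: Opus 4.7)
The plan is to verify the two requirements --- compactness and flow-invariance --- separately.

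Compactness reduces to boundedness, since $\hat S_3$ is cut out by closed conditions. The key observation is that the added inequality $Z_1+Z_2-Z_3\geq 0$ combined with $a\geq 2b$ (Remark \ref{scalr of spher basic ineq}) forces every point of $\hat S_3$ into the region $\{2bZ_3-a(Z_1+Z_2)\leq 0\}$, so Proposition \ref{sharp} applies and gives the uniform bound $Z_1+Z_2\leq 2\sqrt{(n-1)/(n^2(a-b))}$; with $0\leq Z_3\leq Z_1+Z_2$, all three $Z_j$ are bounded. The conservation law \eqref{conservation law} then writes $\mathcal G=1-d\sum_j \mathcal R_j$ as a polynomial in the bounded $Z_j$'s, bounding $\mathcal G=d\sum_j X_j^2$ and hence the $X_j$'s.

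For invariance, Propositions \ref{S_3} and \ref{T_3} already handle the portion of $\partial\hat S_3$ inherited from $\partial S_3$; it remains to check the two new faces $\{Z_1+Z_2-Z_3=0\}$ and $\{W=0\}$, where $W:=Z_1(X_1-X_3)+Z_2(X_2-X_3)$. A direct computation using \eqref{new Ricci-flat} gives $\langle\nabla(Z_1+Z_2-Z_3),V\rangle|_{Z_1+Z_2-Z_3=0}=2W$, which is non-negative by the other defining inequality of $\hat S_3$, so the first face is immediate.

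The main technical step, and the principal obstacle, is invariance of $\{W\geq 0\}$. Differentiating and absorbing the factor multiplying $W$ itself (which vanishes on the zero set), I expect
\begin{equation*}
W'\bigr|_{W=0}=2\bigl[Z_1X_1^2+Z_2X_2^2-(Z_1+Z_2)X_3^2\bigr]+\bigl[Z_1(\mathcal R_1-\mathcal R_3)+Z_2(\mathcal R_2-\mathcal R_3)\bigr].
\end{equation*}
The first bracket is non-negative by Cauchy--Schwarz applied to $(\sqrt{Z_1},\sqrt{Z_2})$ and $(\sqrt{Z_1}X_1,\sqrt{Z_2}X_2)$, using the identity $Z_1X_1+Z_2X_2=(Z_1+Z_2)X_3$ which is equivalent to $W=0$. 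After expanding the curvature terms, the second bracket becomes
\begin{equation*}
\Psi(Z_3):=aZ_1Z_2(2Z_3-Z_1-Z_2)-2b\bigl[(Z_1+Z_2)Z_3^2-Z_1^3-Z_2^3\bigr],
\end{equation*}
which is concave in $Z_3$ (its leading coefficient is $-2b(Z_1+Z_2)\leq 0$). In $\hat S_3$ we have $Z_3\in[\max(Z_1,Z_2),\,Z_1+Z_2]$; at the upper endpoint $\Psi(Z_1+Z_2)=(a-6b)Z_1Z_2(Z_1+Z_2)\geq 0$ by Remark \ref{scalr of spher basic ineq}, and at $Z_3=\max(Z_1,Z_2)$ the expression factors and non-negativity reduces to $a-4b>0$, which follows from $a-6b\geq 0$ and $b>0$. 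Concavity then forces $\Psi\geq 0$ on the whole interval, closing the argument.

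The bookkeeping in the derivative of $W$ and the endpoint/concavity analysis of $\Psi$ are where the substance of the proof lies; everything else is an organized checking of sign conditions already set up in Propositions \ref{S_3} and \ref{T_3}.
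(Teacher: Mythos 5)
Your proposal is correct and follows the same overall architecture as the paper's proof: compactness via the bound $Z_3\le Z_1+Z_2$ feeding into Proposition \ref{sharp} and then the conservation law, and invariance by checking only the two new faces, with the $\{Z_1+Z_2-Z_3=0\}$ face handled by the other defining inequality. On the face $\{W=0\}$ you also match the paper in discarding the non-negative quadratic term (your $Z_1X_1^2+Z_2X_2^2-(Z_1+Z_2)X_3^2$ is identical to the paper's $Z_1(X_1-X_3)^2+Z_2(X_2-X_3)^2$ modulo $W=0$) and reducing to $Z_1(\mathcal R_1-\mathcal R_3)+Z_2(\mathcal R_2-\mathcal R_3)\ge 0$. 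The one place you genuinely diverge is how that last inequality is closed: the paper uses the symmetrization identity $A_1B_1+A_2B_2=\tfrac12\bigl((A_1+A_2)(B_1+B_2)+(A_1-A_2)(B_1-B_2)\bigr)$ together with $Z_1+Z_2\ge Z_3$ and $a-6b\ge 0$, whereas you observe that the bracket is concave in $Z_3$ and evaluate at the endpoints $Z_3=\max(Z_1,Z_2)$ and $Z_3=Z_1+Z_2$ of the admissible interval; both finishes invoke exactly the same two facts ($Z_3\le Z_1+Z_2$ and $a-6b\ge 0$, the latter giving $a-4b>0$ at your lower endpoint), so the arguments are equivalent in strength, with yours arguably a touch more transparent about where $a-6b\ge 0$ enters.
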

\begin{proof}
Because $Z_1+Z_2-Z_3\geq 0$ in $\hat{S}_3$, we can apply Proposition \ref{sharp} so that $Z_1+Z_2$ is bounded above. Then all $Z_j$'s are bounded in $\hat{S}_3$. By conservation law \eqref{conservation law}, we immediately conclude that all variables are bounded. The compactness of $\hat{S}_3$ is hence proved. 

To check that $\hat{S_3}$ is flow invariant, consider the hyperplane $Z_1+Z_2-Z_3=0$. It follows that 
\begin{equation*}
\begin{split}
\left. \langle\nabla(Z_1+Z_2-Z_3),V\rangle \right|_{Z_1+Z_2-Z_3=0}&=(Z_1+Z_2-Z_3)\left(\mathcal{G}-\frac{1}{d}\right)+2Z_1X_1+2Z_2X_2-2Z_3X_3\\
&=2Z_1(X_1-X_3)+2Z_2(X_2-X_3) \quad \text{since $Z_1+Z_2-Z_3=0$}\\
&\geq 0 \quad \text{definition of $\hat{S}_3$}
\end{split}.
\end{equation*}
On hypersurface $Z_1(X_1-X_3)+Z_2(X_2-X_3)=0$, we have
\begin{equation}
\label{messy!}
\begin{split}
&\left. \langle\nabla(Z_1(X_1-X_3)+Z_2(X_2-X_3)),V\rangle \right|_{Z_1(X_1-X_3)+Z_2(X_2-X_3)=0}\\
&=\left. \left\langle\nabla \left(Z_3\left(\frac{Z_1}{Z_3}(X_1-X_3)+\frac{Z_2}{Z_3}(X_2-X_3)\right)\right),V\right\rangle \right|_{Z_1(X_1-X_3)+Z_2(X_2-X_3)=0} \\
&= Z_3\left(\mathcal{G}-\frac{1}{d}+2X_3\right)\left(\frac{Z_1}{Z_3}(X_1-X_3)+\frac{Z_2}{Z_3}(X_2-X_3)\right)+Z_3\left(2\dfrac{Z_1}{Z_3}(X_1-X_3)^2+2\dfrac{Z_2}{Z_3}(X_2-X_3)^2\right)\\
&\quad+Z_1\left((X_1-X_3)\left(\mathcal{G}-1\right)+\mathcal{R}_1-\mathcal{R}_3\right)+Z_2\left((X_2-X_3)\left(\mathcal{G}-1\right)+\mathcal{R}_2-\mathcal{R}_3\right)\\
&=2Z_1(X_1-X_3)^2+2Z_2(X_2-X_3)^2+Z_1(\mathcal{R}_1-\mathcal{R}_3)+Z_2(\mathcal{R}_2-\mathcal{R}_3)\\
&\quad \text{ since $Z_1(X_1-X_3)+Z_2(X_2-X_3)=0$}\\
&\geq Z_1(\mathcal{R}_1-\mathcal{R}_3)+Z_2(\mathcal{R}_2-\mathcal{R}_3)\\
&=Z_1(Z_3-Z_1)(aZ_2-2b(Z_3+Z_1))+Z_2(Z_3-Z_2)(aZ_1-2b(Z_3+Z_2))
\end{split}
\end{equation}
For distinct $j,k\in\{1,2\}$, take $A_j=Z_j(Z_3-Z_j)$ and $B_j=aZ_k-2b(Z_3+Z_j)$. Apply identity $$A_1B_1+A_2B_2=\frac{1}{2}((A_1+A_2)(B_1+B_2)+(A_1-A_2)(B_1-B_2)).$$ Then the computation \eqref{messy!} continues as
\begin{equation}
\label{usefulfor p3}
\begin{split}
&= \frac{1}{2}(Z_1(Z_3-Z_1)+Z_2(Z_3-Z_2))((a-2b)(Z_1+Z_2)-4bZ_3)+\frac{1}{2}(Z_1-Z_2)^2(Z_1+Z_2-Z_3)(a+2b)\\
&\geq \frac{1}{2}(Z_1(Z_3-Z_1)+Z_2(Z_3-Z_2))(a-6b)Z_3 \quad \text{since $Z_1+Z_2\geq Z_3$}\\
&\geq 0 \quad \text{Remark \ref{scalr of spher basic ineq}}
\end{split}.
\end{equation}
Therefore, $\hat{S}_3$ is flow-invariant.
\end{proof}

\begin{remark}
\label{S2}
By the symmetry between $(X_2,Z_2)$ and $(X_3,Z_3)$, constructions of $S_3$ and $\hat{S}_3$ above can be carried over to defining $S_2$ and $\hat{S}_2$. With the same arguments, it can be shown that $\gamma_{s_1}$ does not escape $S_2$ whenever $s_1\leq 0$ and $\hat{S}_2$ is a compact invariant set.
\end{remark}

\begin{remark}
\label{Mcontainy}
It is clear that $p_0\in\partial \hat{S}_3$. One can check that $\gamma_{0}$ is trapped in $\hat{S}_3$ initially. Hence the long time existence for $\gamma_0$ is proved. By Remark \ref{y a subcase}, it is trapped in $\hat{S}_3\cap\{X_2\equiv X_3, Z_2\equiv Z_3\}$. Hence $\hat{S}_3$ can be used to prove the long time existence for the special case where $(X_2,Z_2)\equiv (X_3,Z_3)$ is imposed. In fact, the compact invariant set for cohomogeneity one manifolds of two summands can be constructed by a little modification on $\hat{S}_3\cap \{X_2\equiv X_3, Z_2\equiv Z_3\}$, reproducing the same result in \cite{bohm_inhomogeneous_1998}\cite{wink2017cohomogeneity}. For Case I in particular, $\gamma_0$ represents the complete $G_2$ metric discovered in in \cite{bryant1989construction}\cite{gibbons1990Einstein}.
\end{remark}

\begin{remark}
\label{positivericci}
Not only $\mathcal{R}_3$ is non-negative in $\hat{S}_3$. This is in fact  the case for all $\mathcal{R}_j$'s. For distinct $j,k\in\{1,2\}$, we have
\begin{equation*}
\begin{split}
\mathcal{R}_j=aZ_3Z_k+b(Z_j^2-Z_k^2-Z_3^2)&\geq aZ_3Z_k+b(Z_j^2+Z_k^2-(Z_j+Z_k)^2)\quad \text{by definition of $\hat{S}_3$}\\
&=aZ_3Z_k-2bZ_jZ_k\\
&\geq (a-2b)Z_jZ_k \quad \text{by definition of $\hat{S}_3$}\\
&\geq 0
\end{split}.
\end{equation*}
Therefore, one geometric feature of complete Ricci-flat metrics represented by $\gamma_0$ is that hypersurface has positive Ricci tensor for all $t\in(0,\infty)$. As discussed in Remark \ref{posiscaler}, Ricci-flat metrics represented by $\gamma_{s_1}$ with $s_1\neq 0$ does not hold such a property.
\end{remark}

Although $\gamma_{s_1}$ is trapped in $S_3$ if $s_1\geq 0$, functions $Z_1+Z_2-Z_3$ and $Z_1(X_1-X_3)+Z_2(X_2-X_3)$ are negative initially if $s_1> 0$. Hence $\gamma_{s_1}$ is not trapped in $\hat{S}_3$ initially if $s_1> 0$. 
To include the case where $s_1> 0$, we need to enlarge $\hat{S}_3$ a little bit so that it initially traps all $\gamma_{s_1}$ with $s_1$ close enough to zero. That leads us to the second step of our construction.

\subsection{Entrance Zone}
\label{Entrance Zone}
In this section, we assume $s_1>0$ and work with the set $S_3$.  We construct an entrance zone that forces $\gamma_{s_1}$ to enter $\hat{S}_3$ eventually. Our goal is to show that for all small enough $s_1>0$, $\gamma_{s_1}$ will enter $\hat{S}_3$ in a compact set. 
As shown in computation \eqref{messy!}, it is more convenient to compute with variables
$
\omega_1=\frac{Z_1}{Z_3}$ and $\omega_2=\frac{Z_2}{Z_3},
$
whose respective derivatives are
$
\omega_1'=2\omega_1(X_1-X_3)$ and $\omega_2'=2\omega_2(X_2-X_3).
$
By the definition of $S_3$, we have $Z_3\geq Z_1,Z_2$. Therefore $\omega_1,\omega_2\in[0,1]$. For another point of view, we can also consider the problem on $\omega_1\omega_2$-plane as shown Figure \ref{Fig1}. Whatever $\gamma_{s_1}$ looks like, we can always project its $Z_1$ and $Z_2$ coordinate to $\omega_1 \omega_2$-plane. And we want to prove the projection is bounded away from $(0,0)$ and hopefully going through the line
$$l_0\colon \omega_1+\omega_2-1=0,$$
which is the projection of hyperplane $Z_1+Z_2-Z_3=0.$ Note that any homogeneous variety in $Z_j$'s of degree $D$ can be projected to an algebraic curve on $\omega_1\omega_2$-plane by dividing by $Z_3^D$. Before the construction, we establish the following basic fact.

\begin{figure}[h!]
\begin{center}
\includegraphics[width=2.5in]{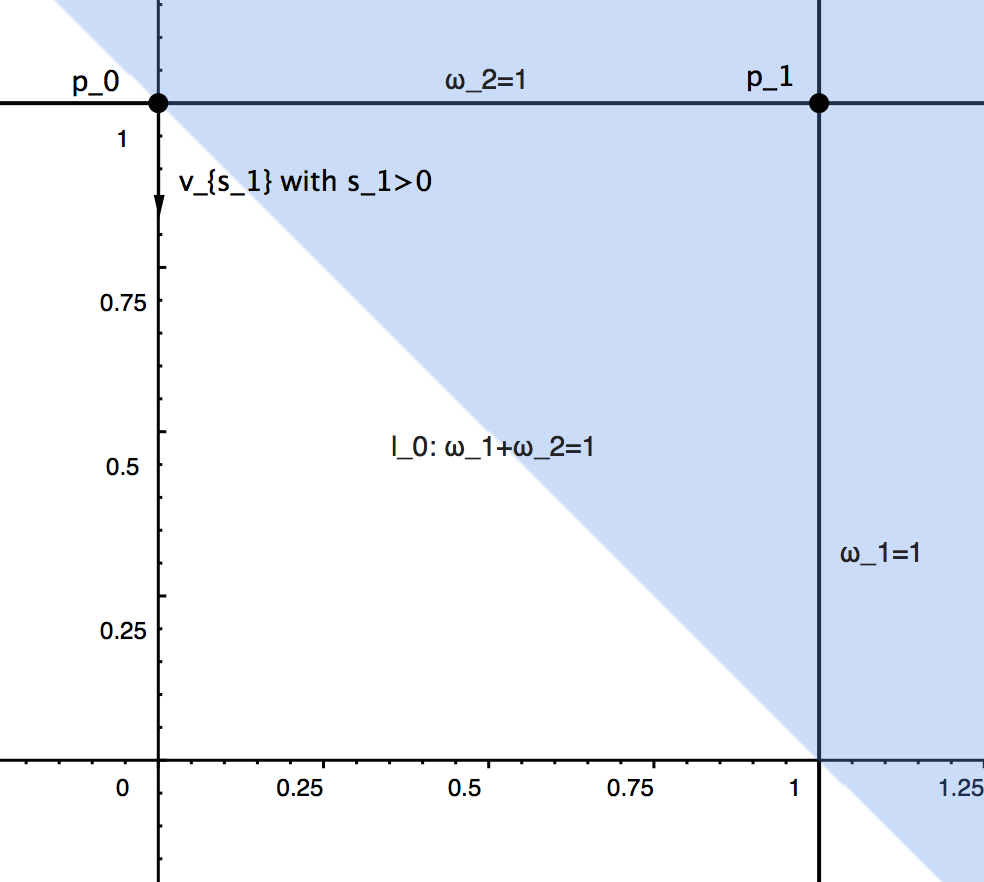}
\caption{Projection to $\omega_1\omega_2$-plane}
\label{Fig1}
\end{center}
\end{figure}

\begin{proposition}
\label{at least to Z_1=Z_2}
$\frac{Z_1}{Z_2}$ is strictly increasing along $\gamma_{s_1}$ as long as $Z_2> Z_1$.
\end{proposition}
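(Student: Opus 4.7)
The plan is to differentiate the ratio and reduce the question to a sign statement about $X_1 - X_2$. Using the $Z_j$-component of \eqref{new Ricci-flat} together with $\mathcal{H}\equiv 1$ from Remark \ref{smooth conservation}, one immediately obtains
\[
\left(\frac{Z_1}{Z_2}\right)' = \frac{2Z_1}{Z_2}(X_1 - X_2),
\]
and since $Z_1 > 0$ along $\gamma_{s_1}$ for every finite $\eta$ (Remark \ref{symmetry 2}, combined with $s_0 > 0$ in \eqref{linearized solution<}), strict monotonicity of $Z_1/Z_2$ is equivalent to $X_1 > X_2$ on the interval where $Z_2 > Z_1$.

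The initial sign is immediate from the linearization: by \eqref{linearized solution<} one reads $X_1 \to 1/d$ and $X_2 \to 0$ as $\eta \to -\infty$, so $X_1 - X_2 > 0$ near $p_0$. I would then argue by contradiction. Suppose $\eta_0$ is the first time at which $X_1(\eta_0) = X_2(\eta_0)$ while $Z_2(\eta_0) > Z_1(\eta_0)$. Positivity of $X_1 - X_2$ on $(-\infty, \eta_0)$ forces $(X_1 - X_2)'(\eta_0) \leq 0$. On the other hand \eqref{new Ricci-flat} gives $(X_1 - X_2)' = (X_1 - X_2)(\mathcal{G}-1) + \mathcal{R}_1 - \mathcal{R}_2$, and a short manipulation of \eqref{ricci on principal} yields the clean identity
\[
\mathcal{R}_1 - \mathcal{R}_2 = (Z_2 - Z_1)\bigl(aZ_3 - 2b(Z_1 + Z_2)\bigr),
\]
so at $\eta_0$ the first summand vanishes and everything reduces to the sign of $aZ_3 - 2b(Z_1 + Z_2)$.

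This is the only non-routine step. Since $\gamma_{s_1}$ remains in $S_3$ by Propositions \ref{S_3} and \ref{T_3}, we have $Z_3 \geq \max(Z_1, Z_2) \geq \tfrac{1}{2}(Z_1 + Z_2)$, hence
\[
aZ_3 - 2b(Z_1 + Z_2) \;\geq\; \tfrac{1}{2}(a - 4b)(Z_1 + Z_2).
\]
Remark \ref{scalr of spher basic ineq} gives $a - 6b \geq 0$, so $a - 4b \geq 2b > 0$ uniformly in all three cases; together with $Z_2 > Z_1 \geq 0$ (which forces $Z_1 + Z_2 > 0$) and $Z_2 - Z_1 > 0$, this gives $(X_1 - X_2)'(\eta_0) > 0$, contradicting the first-crossing inequality. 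Hence no such $\eta_0$ exists, $X_1 > X_2$ throughout the interval where $Z_2 > Z_1$, and the ratio is strictly increasing there. The main obstacle is entirely the positivity of the sphere-based quantity $a - 4b$, and it is reassuring that it drops out uniformly from the inequality $a - 6b \geq 0$ in Remark \ref{scalr of spher basic ineq}.
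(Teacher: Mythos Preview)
Your argument is correct and follows essentially the same line as the paper's proof: reduce to the sign of $X_1-X_2$, compute $(X_1-X_2)'$ at a first zero to obtain $(Z_2-Z_1)\bigl(aZ_3-2b(Z_1+Z_2)\bigr)$, and use $Z_3\geq Z_2>Z_1$ (from membership in $S_3$) together with $a-4b>0$ from Remark~\ref{scalr of spher basic ineq} to get strict positivity. The paper bounds $aZ_3-2b(Z_1+Z_2)\geq (a-4b)Z_2$ directly from $Z_3\geq Z_2$ and $Z_1<Z_2$, whereas you pass through $Z_3\geq\tfrac12(Z_1+Z_2)$; both estimates are equivalent here and yield the same contradiction.
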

\begin{proof}
Initially we have $(X_1-X_2)(p_0)=\frac{1}{d}$. If $Z_2>Z_1$, we have
\begin{equation}
\begin{split}
\left.\left(X_1-X_2\right)'\right|_{X_1-X_2=0}&=(X_1-X_2)(\mathcal{G}-1)+\mathcal{R}_1-\mathcal{R}_2\\
&=(Z_2-Z_1)(aZ_3-2bZ_1-2bZ_2)\\
&\geq (Z_2-Z_1)(a-4b)Z_2 \quad \text{since $Z_3\geq Z_2>Z_1$}\\
&> 0 \quad \text{Remark \ref{scalr of spher basic ineq}}
\end{split}.
\end{equation}
Hence $X_1-X_2>0$ along $\gamma_{s_1}$ when $Z_2>Z_1$. But then
\begin{equation}
\begin{split}
\left(\frac{Z_1}{Z_2}\right)'&=2\frac{Z_1}{Z_2}(X_1-X_2)> 0
\end{split}
\end{equation}
when $Z_2>Z_1$. Therefore $\frac{Z_1}{Z_2}$ is strictly increasing along $\gamma_{s_1}$ as long as $Z_2>Z_1$.
\end{proof}

Substitute solution \eqref{linearized solution<} of linearized equation to $\mathcal{R}_1-\mathcal{R}_3$ and $\mathcal{R}_3-\mathcal{R}_2$. It is clear that they are positive initially. Hence at the beginning, the integral curve is trapped in
\begin{equation}
\label{U_0}
U_0=S_3\cap \{Z_1+Z_2-Z_3\leq 0, \mathcal{R}_1-\mathcal{R}_3\geq 0, \mathcal{R}_3-\mathcal{R}_2\geq 0\},
\end{equation}
whose projection on $\omega_1\omega_2$-plane for all three cases is illustrated in Figure \ref{Fig 2}.

\begin{figure}[h!]
\begin{subfigure}{.5\textwidth}
  \centering
  \includegraphics[width=1\linewidth]{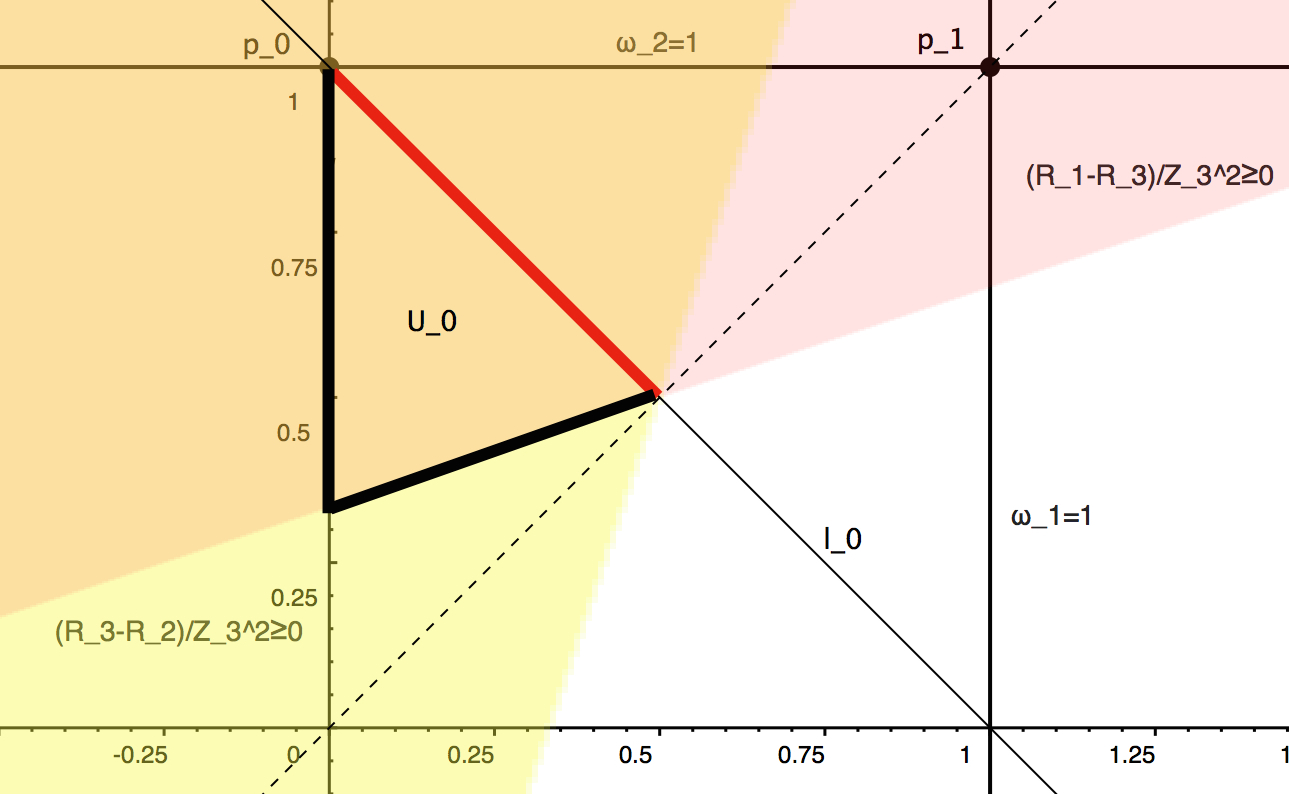}
  \caption{Case I}
  \label{fig:sfig2-0}
\end{subfigure}
\begin{subfigure}{.5\textwidth}
  \centering
  \includegraphics[width=1\linewidth]{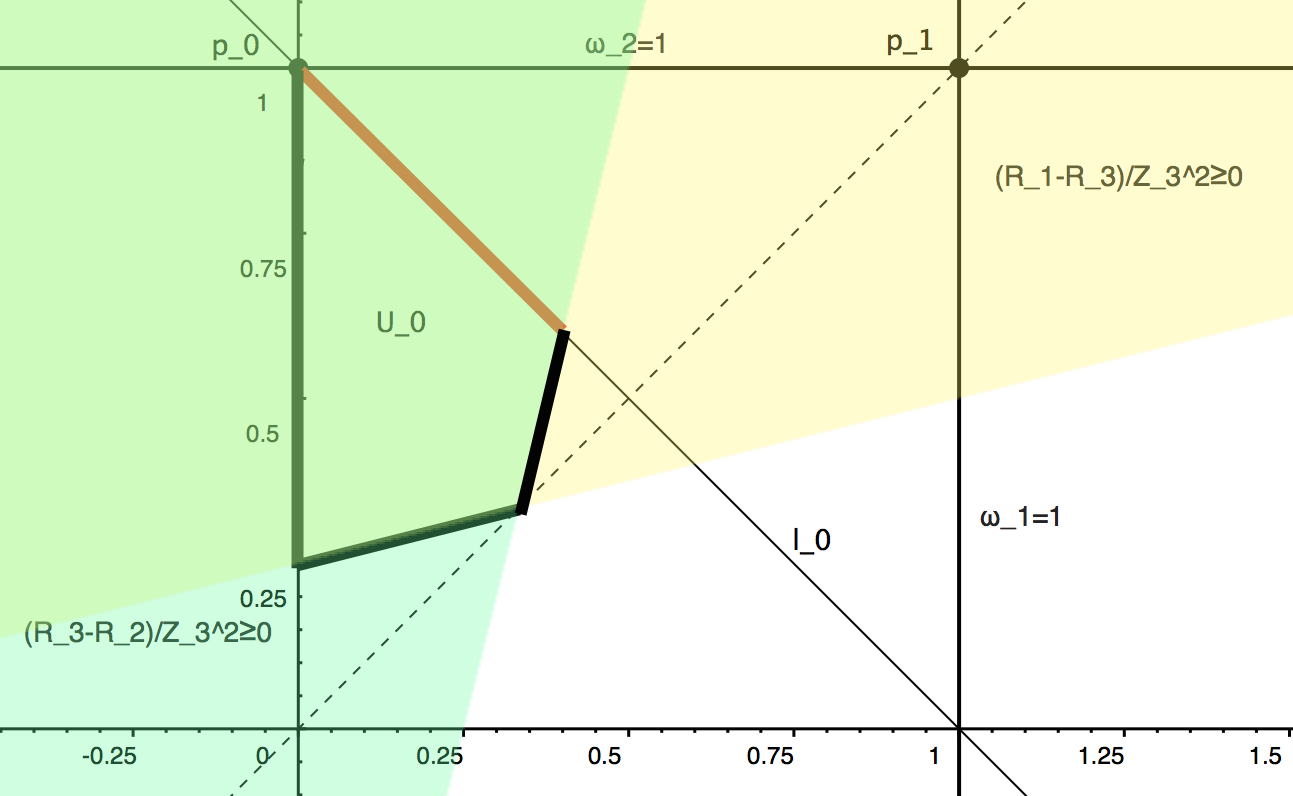}
  \caption{Case II}
  \label{fig:sfig2-1}
\end{subfigure}
\begin{subfigure}{.5\textwidth}
  \centering
  \includegraphics[width=1\linewidth]{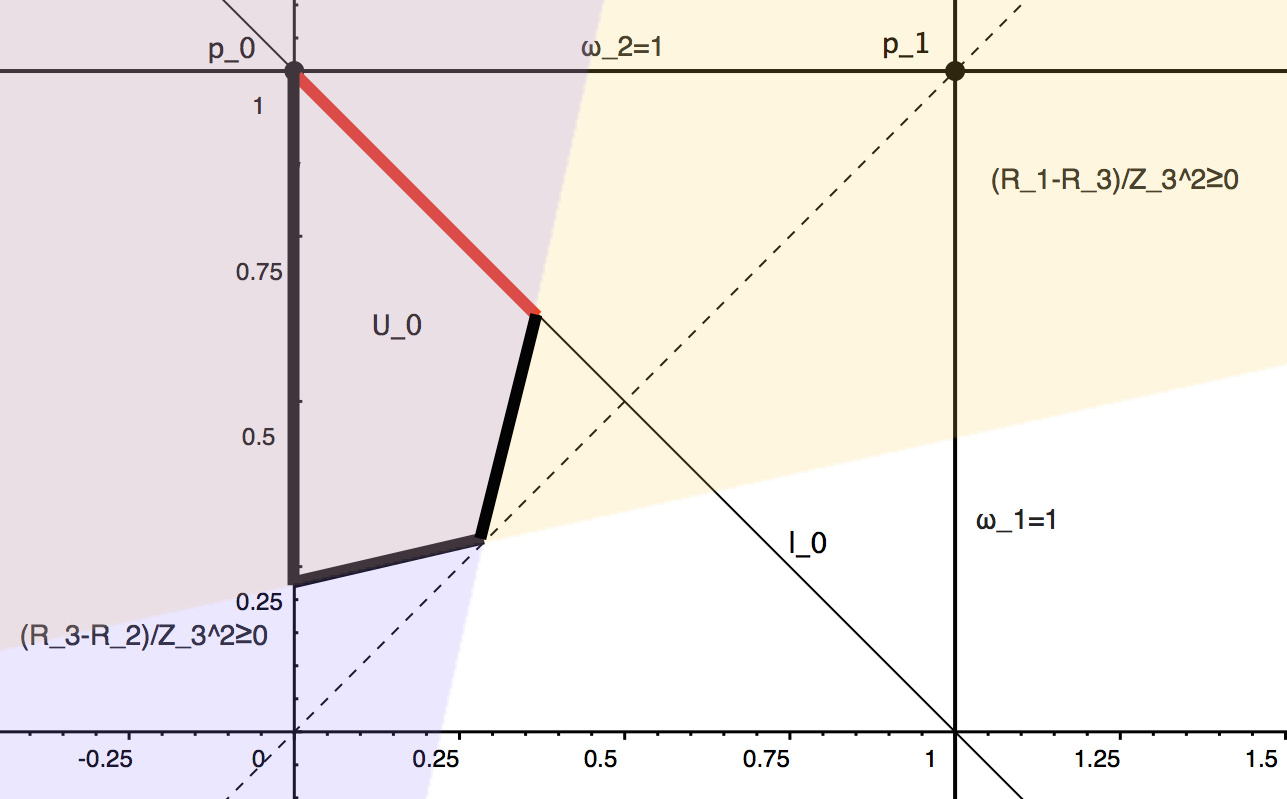}
  \caption{Case III}
  \label{fig:sfig2-2}
\end{subfigure}
\caption{Projection of $U_0$ (enclosed by bold line segments) on $\omega_1\omega_2$-plane for all three cases}
\label{Fig 2}
\end{figure}
 
By Proposition \ref{at least to Z_1=Z_2}, we know that in principal, the projection of $\gamma_{s_1}$ on $\omega_1\omega_2$-plane can get arbitrarily closed to $\omega_1-\omega_2=0$, represented the dashed lines in Figure \ref{Fig 2}. Therefore, an integral curve that is initially trapped in $U_0$ has to escape. The question is whether it will escape $U_0$ through $Z_1+Z_2-Z_3=0$, represented by the red line segment. It turns out that a subset of $U_0$ can be constructed in a way that it contains a part of $Z_1+Z_2-Z_3=0$ and $\gamma_{s_1}$ has to escape that subset through $Z_1+Z_2-Z_3=0$. Specifically, the construction is based on the following three ideas.

1. Since $Z_1+Z_2-Z_3\leq 0$ initially along $\gamma_{s_1}$, the main task is to bound $Z_3$ from above. For computation conveniences, we prefer to bound $Z_3$ from above by some homogeneous algebraic varieties in $Z_j$'s. In other words, defining inequalities of the entrance zone should include $Z_1+Z_2-Z_3\leq 0$ and $B(Z_1,Z_2,Z_3)\geq 0$ for some homogeneous polynomial $B$ in $Z_j$'s.

2. In order to show that $\gamma_{s_1}$ does not escape through $B=0$, we need to show that $\left.\langle \nabla(B),V\rangle\right|_{B=0}$ is non-negative along $\gamma_{s_1}$ in the entrance zone. This idea is discussed in the proof of Proposition \ref{Set S3}. It might be difficult to determine the sign of $\left.\langle \nabla(B),V\rangle\right|_{B=0}$ even we are allowed to mod out $B=0$ in the computation result. But notice that $B':=\left.\langle \nabla(P),V\rangle\right|_{B=0}=0$ vanishes at $p_0$, and inequality $B'\geq 0$ can potentially be added to the definition of the entrance zone.

3. If we want to impose $B'\geq 0$, the trade-off is to show that $\left.\langle \nabla(B'),V\rangle\right|_{B'=0}\geq 0$ along $\gamma_{s_1}$ in the entrance zone. The homogeneous polynomial $B$ that we find consists of two parameters. They allow us to tune the entrance zone to satisfy some technical inequalities. Once these inequalities are satisfied, we can show that $\left.\langle \nabla(B'),V\rangle\right|_{B'=0}\geq 0$ in the entrance zone and $\gamma_{s_1}$ is forced to escape through $Z_1+Z_2-Z_3=0$.

We first proceed the construction by having those technical inequalities in part 3 ready. In this process, the first parameter for $B$ is introduced and how they interact with these technical inequalities are explained. Then we reveal the definition for $B$ and its last parameter.

\begin{proposition}
\label{X_3+X_2>0}
In $S_3$, $X_2+X_3> 0$ along $\gamma_{s_1}$ always.
\end{proposition}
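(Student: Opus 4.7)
The plan is to run a standard barrier/sign-preservation argument on the function $X_2+X_3$ along $\gamma_{s_1}$. First I would examine the initial behavior at $p_0$: reading off the $(X_2,X_3)$ components of the linearized solution \eqref{linearized solution<}, the $s_1$ contribution is antisymmetric in indices $2,3$ (from $v_1$) and therefore cancels in the sum, while the $s_0$ contribution is symmetric and equal, giving $X_2+X_3 = 2as_0 e^{2\eta/d} + o(e^{2\eta/d})$. Since $s_0>0$ by assumption, $X_2+X_3 > 0$ for $\eta$ sufficiently negative.

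Next I would differentiate along the flow. From \eqref{new Ricci-flat},
\begin{equation}
(X_2+X_3)' = (X_2+X_3)(\mathcal{G}-1) + \mathcal{R}_2+\mathcal{R}_3,
\end{equation}
so at any putative first zero $\eta_0$ of $X_2+X_3$ the drift term vanishes and $(X_2+X_3)'(\eta_0)=\mathcal{R}_2+\mathcal{R}_3$. A direct computation with \eqref{ricci on principal} gives the clean identity
\begin{equation}
\mathcal{R}_2+\mathcal{R}_3 = Z_1\bigl(a(Z_2+Z_3) - 2bZ_1\bigr).
\end{equation}
In $S_3$ we have $Z_3\geq Z_1$ and $Z_2\geq 0$, so $Z_2+Z_3\geq Z_1$; combined with $a-2b=d-1>0$ from Remark \ref{scalr of spher basic ineq}, this produces the bound $\mathcal{R}_2+\mathcal{R}_3\geq (d-1)Z_1^{2}$.

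Finally, I would observe that $Z_1$ is strictly positive along $\gamma_{s_1}$ at every finite $\eta$: the equation $Z_1' = Z_1(\mathcal{G}-\mathcal{H}/d+2X_1)$ is linear and homogeneous in $Z_1$ (cf.\ Remark \ref{symmetry 2}), and the linearization gives $Z_1\sim (d+1)s_0 e^{2\eta/d}>0$ near $p_0$, so sign is preserved everywhere on the curve. Consequently $\mathcal{R}_2+\mathcal{R}_3 > 0$ at $\eta_0$, contradicting the fact that $X_2+X_3$ would have to be non-increasing as it reaches zero from above. This rules out any finite zero of $X_2+X_3$ inside $S_3$, proving the proposition. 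The argument is essentially a one-step barrier estimate; I do not anticipate any real obstacle, the only substantive input being the non-negativity of $\mathcal{R}_2+\mathcal{R}_3$, which reduces to the elementary inequality $a\geq 2b$ recorded in Remark \ref{scalr of spher basic ineq}.
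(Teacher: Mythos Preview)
Your argument is correct and essentially identical to the paper's: both check that $X_2+X_3>0$ initially, compute $(X_2+X_3)'|_{X_2+X_3=0}=\mathcal{R}_2+\mathcal{R}_3=Z_1\bigl(a(Z_2+Z_3)-2bZ_1\bigr)$, and conclude strict positivity from $Z_3\geq Z_1$, $a-2b=d-1>0$, and $Z_1>0$ along $\gamma_{s_1}$. You supply a bit more detail on the initial behavior via the linearized solution and on the strict positivity of $Z_1$, but the substance is the same.
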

\begin{proof}
It is clear that $X_2+X_3$ is positive initially along the curves.
Since
\begin{equation}
\begin{split}
\left. \langle\nabla(X_2+X_3),V\rangle \right|_{X_2+X_3=0}&=(X_2+X_3)(\mathcal{G}-1)+\mathcal{R}_2+\mathcal{R}_3\\
&=\mathcal{R}_2+\mathcal{R}_3\quad \text{since $X_2+X_3=0$}\\
&\geq Z_1(a(Z_2+Z_3)-2bZ_1)\\
&> 0 \quad \text{since $Z_3\geq Z_1$ and $a-2b=d-1>0$}
\end{split},
\end{equation}
$X_2+X_3$ stays positive along $\gamma_{s_1}$.
\end{proof}

\begin{proposition}
\label{compare X_3 with X_2}
For any fixed $\delta\geq 0$, $X_3- (1+\delta)X_2>0$ initially along $\gamma_{s_1}$ and stay positive in the region where $\mathcal{R}_3-(1+\delta) \mathcal{R}_2\geq 0$.
\end{proposition}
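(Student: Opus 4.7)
The plan is to reduce the claim to a first-order scalar linear ODE for $f := X_3-(1+\delta)X_2$ along $\gamma_{s_1}$ and then apply an integrating-factor argument. Differentiating using system \eqref{new Ricci-flat} gives immediately
$$
f' = f(\mathcal{G}-1) + \bigl(\mathcal{R}_3 - (1+\delta)\mathcal{R}_2\bigr),
$$
so the forcing term is precisely the quantity assumed non-negative in the statement. This is the entire dynamical content of the proposition.

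For the initial positivity, I would read off the leading behaviour of $f$ as $\eta\to-\infty$ from the linearised solution \eqref{linearized solution<}. The unstable direction $v_1$ contributes $+s_1 e^{\eta/d}$ to $X_3$ and $-s_1 e^{\eta/d}$ to $X_2$, so the $v_1$ mode contributes $(2+\delta)s_1 e^{\eta/d}$ to $f$; the other unstable mode $(d+1)v_4-av_3$ contributes a multiple of $e^{2\eta/d}$, which is subdominant as $\eta\to-\infty$ because $e^{\eta/d}/e^{2\eta/d}=e^{-\eta/d}\to\infty$. Under the standing assumption $s_1>0$ of Section \ref{Entrance Zone}, the sign of $f$ near $p_0$ is the sign of $s_1$, hence strictly positive.

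To propagate this positivity, I would set $\mu(\eta):=\exp\!\bigl(-\int(\mathcal{G}-1)\,d\eta\bigr)>0$, for which the ODE above becomes
$$
(\mu f)' = \mu\,\bigl(\mathcal{R}_3-(1+\delta)\mathcal{R}_2\bigr)\geq 0
$$
throughout the region $\{\mathcal{R}_3-(1+\delta)\mathcal{R}_2\geq 0\}$. Picking an $\eta_0$ small enough that Step~2 gives $f(\eta_0)>0$, integration yields $\mu(\eta)f(\eta)\geq \mu(\eta_0)f(\eta_0)>0$ for all subsequent $\eta$ while the curve remains in that region, and dividing by the positive $\mu(\eta)$ gives $f(\eta)>0$, as required.

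I do not expect a real obstacle here: the result is essentially the comparison principle for a scalar linear ODE, combined with the elementary observation that the $s_1$-mode dominates near $p_0$. The only subtlety is the use of $s_1>0$ to secure initial positivity; it matches the standing hypothesis of the subsection, and without it (say $s_1=0$, $\delta>0$) the claim genuinely fails since $X_2\equiv X_3$ along $\gamma_0$ by Remark \ref{y a subcase}.
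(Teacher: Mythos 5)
Your proposal is correct and follows essentially the same route as the paper: both reduce to the scalar linear ODE $f'=f(\mathcal{G}-1)+(\mathcal{R}_3-(1+\delta)\mathcal{R}_2)$, establish initial positivity from the dominant $(2+\delta)s_1e^{\eta/d}$ term of \eqref{linearized solution<} (with the standing assumption $s_1>0$), and propagate it using non-negativity of the forcing term; your integrating-factor packaging is just a slightly more explicit version of the paper's evaluation of the derivative on the locus $X_3-(1+\delta)X_2=0$.
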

\begin{proof}
Substitute solution \eqref{linearized solution<} of linearized equation to $X_3-(1+\delta)X_2$. We have 
$$
(2+\delta)s_1e^{\frac{\eta}{d}}-a\delta s_0e^\frac{2\eta}{d} \sim (2+\delta)s_1e^{\frac{\eta}{d}}>0
$$
near $p_0$.
Since 
\begin{equation}
\label{comp for X_3and X_2}
\begin{split}
\left. \langle\nabla(X_3-(1+\delta X_2)),V\rangle \right|_{X_3-(1+\delta) X_2=0} &=(X_3-(1+\delta) X_2)\left(\mathcal{G}-1\right)+\mathcal{R}_3-(1+\delta)\mathcal{R}_2\\
&=\mathcal{R}_3-(1+\delta)\mathcal{R}_2\quad \text{since $X_3-(1+\delta) X_2=0$},
\end{split}
\end{equation}
the proof is complete.
\end{proof}

Define
\begin{equation}
\label{Udelta}
U_\delta=U_0\cap \{\mathcal{R}_3-(1+\delta)\mathcal{R}_2\geq 0\}.
\end{equation}
It is easy to check that $U_\delta$ is a subset of $U_0$ and $\gamma_{s_1}$ is initially trapped in $U_{\delta}$ if $s_1>0$. Therefore, $X_3-(1+\delta)X_2>0$ when $\gamma_{s_1}$ is in $U_\delta$ by Proposition \ref{compare X_3 with X_2}.

The fixed value of $\delta$ needs to be picked in a certain range for the following two technical reasons. Firstly, we want inequality $X_3-(1+\delta)X_2> 0$ to hold at least until $\gamma_{s_1}$ enters $\hat{S}_3$. Hence by proposition \ref{compare X_3 with X_2}, we need to pick $\delta$ that make $\mathcal{R}_3-(1+\delta)\mathcal{R}_2\geq 0$ contains a subset of $Z_1+Z_2-Z_3=0$. Secondly, because $U_0\subset S_3\cap \{Z_2-Z_1>0\}$ and the behavior of $\gamma_{s_1}$ is better known in $U_0$, we want $\gamma_{s_1}$ passes though the part of $Z_1+Z_2-Z_3=0$ that $Z_2-Z_1\geq 0$ is satisfied. In summary, we have the following proposition.

\begin{proposition}
\label{R3-R2}
If $\delta\in \left(\frac{6b-a}{2(d-1)},\frac{4b}{d-1}\right)$, then $\{\mathcal{R}_3-(1+\delta)\mathcal{R}_2\geq 0\}$ contains a subset of $\{Z_1+Z_2-Z_3=0\}\cap \{Z_2-Z_1>0\}$ in $U_0$. 
\end{proposition}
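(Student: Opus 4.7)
The plan is to restrict to the hyperplane $\{Z_1+Z_2=Z_3\}$, where the condition $\mathcal{R}_3-(1+\delta)\mathcal{R}_2\geq 0$ collapses to a single linear inequality in the ratio $\mu:=Z_2/Z_1$, and then read off the endpoints of the $\delta$-interval from the threshold behaviour of its solution set. First I would substitute $Z_3=Z_1+Z_2$ into the formulas \eqref{ricci on principal}. Using the identity $a-2b=d-1$ from Remark \ref{scalr of spher basic ineq}, a direct expansion gives
\[
\mathcal{R}_2\big|_{Z_3=Z_1+Z_2}=(d-1)\,Z_1(Z_1+Z_2),\qquad \mathcal{R}_3\big|_{Z_3=Z_1+Z_2}=(a+2b)\,Z_1Z_2,
\]
so on this hyperplane
\[
\mathcal{R}_3-(1+\delta)\mathcal{R}_2=Z_1\bigl[(a+2b)Z_2-(1+\delta)(d-1)(Z_1+Z_2)\bigr].
\]

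Next I would pass to the ratio $\mu=Z_2/Z_1$ on the open half $Z_1>0$. The inequality becomes $\bigl((a+2b)-(1+\delta)(d-1)\bigr)\mu\geq(1+\delta)(d-1)$. Since $(a+2b)-(d-1)=4b$, the upper bound $\delta<\tfrac{4b}{d-1}$ is exactly the condition that keeps the coefficient of $\mu$ strictly positive; under this assumption the inequality rewrites as $\mu\geq\mu^*(\delta)$ with
\[
\mu^*(\delta):=\dfrac{(1+\delta)(d-1)}{(a+2b)-(1+\delta)(d-1)}\in(0,\infty),
\]
a strictly increasing function of $\delta$ that blows up as $\delta\to\tfrac{4b}{d-1}$.

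Finally I would pin down the lower endpoint by checking when $\mu^*(\delta)>1$. Clearing the positive denominator gives $2(1+\delta)(d-1)>a+2b$, which via $a-2b=d-1$ rearranges to $\delta>\tfrac{6b-a}{2(d-1)}$. Hence on the stated open interval $\mu^*(\delta)\in(1,\infty)$, so the solution set of the inequality on the hyperplane is non-empty and lies entirely inside $\{Z_2>Z_1\}$. Concretely, taking any $\mu>\max\bigl(\mu^*(\delta),\tfrac{d-1}{4b}\bigr)$ and $Z_1>0$ small produces a point satisfying all defining inequalities of $U_0$ on the hyperplane: the bound $\mu\geq\tfrac{d-1}{4b}$ is precisely what makes the remaining two $U_0$-conditions $\mathcal{R}_1-\mathcal{R}_3\geq 0$ and $\mathcal{R}_3-\mathcal{R}_2\geq 0$ hold there (one uses the additional inequality $a\geq 6b$ from Remark \ref{scalr of spher basic ineq} to see that $\tfrac{d-1}{4b}\geq\tfrac{4b}{d-1}$), and the $X_j$'s are then adjusted to meet $\mathcal{H}\equiv 1$ together with \eqref{conservation law}. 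The whole argument is essentially linear algebra after the hyperplane reduction; there is no serious obstacle, only the bookkeeping that the two endpoints of the $\delta$-interval correspond exactly to $\mu^*(\delta)=1$ and $\mu^*(\delta)=\infty$.
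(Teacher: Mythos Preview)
Your argument is correct and follows essentially the same route as the paper: restrict to the hyperplane $Z_3=Z_1+Z_2$, reduce $\mathcal{R}_3-(1+\delta)\mathcal{R}_2$ to the linear expression $Z_1\bigl[(4b-(d-1)\delta)Z_2-(d-1)(1+\delta)Z_1\bigr]$, and read off the threshold in the ratio $Z_2/Z_1$. The paper works with $Z_1/Z_2$ and simply observes that the bound $\frac{4b-(d-1)\delta}{(d-1)(1+\delta)}$ lies in $(0,1)$ for $\delta$ in the stated interval; your reciprocal $\mu^*(\delta)$ is the same quantity inverted, and your derivation of the two endpoints as $\mu^*=1$ and $\mu^*=\infty$ makes the role of each bound on $\delta$ more transparent than the paper's terse verification. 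You also go slightly beyond the paper by explicitly checking the remaining $U_0$ inequalities $\mathcal{R}_1-\mathcal{R}_3\geq 0$ and $\mathcal{R}_3-\mathcal{R}_2\geq 0$ on the hyperplane (the paper leaves this to the figures), which is a welcome addition rather than a deviation.
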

\begin{proof}
If $\delta\in\left(\frac{6b-a}{2(d-1)},\frac{4b}{d-1}\right)$, then we have 
$\frac{4b-(d-1)\delta}{(d-1)(1+\delta)}\in (0,1)$. 
Suppose $\frac{4b-(d-1)\delta}{(d-1)(1+\delta)}\geq \frac{Z_1}{Z_2}$, then 
\begin{equation}
\label{R3R2}
\begin{split}
&\left.(\mathcal{R}_3-(1+\delta)\mathcal{R}_2)\right|_{Z_1+Z_2-Z_3=0}\\
&=(Z_3-Z_2)(2b(Z_3+Z_2)-aZ_1)-\delta(aZ_1Z_3+b(Z_2^2-Z_1^2-Z_3^2))\quad \text{since }Z_1+Z_2-Z_3=0\\
&=Z_1(2b(2Z_2+Z_1)-aZ_1)-\delta(aZ_1Z_2+aZ_1^2-2bZ_1^2-2bZ_1Z_2)\quad \text{since }Z_1+Z_2-Z_3=0\\
&=Z_1(-(d-1)(1+\delta)Z_1+(4b-(d-1)\delta)Z_2)\quad \text{Remark \ref{scalr of spher basic ineq}}\\
&\geq 0
\end{split}.
\end{equation}
The proof is complete.
\end{proof}

\begin{remark}
Perhaps a better way to illustrate Proposition \ref{R3-R2} is to consider the projection on the $\omega_1\omega_2$-plane. For $\mathcal{R}_3-(1+\delta)\mathcal{R}_2=0$, we obtain an algebraic curve
$$l_1\colon (1-\omega_2)(2b(1+\omega_2)-a\omega_1)-\delta(a\omega_1+b(\omega_2^2-\omega_1^2-1))=0.$$
\begin{figure}[h!]
\begin{center}
\includegraphics[width=2.6in]{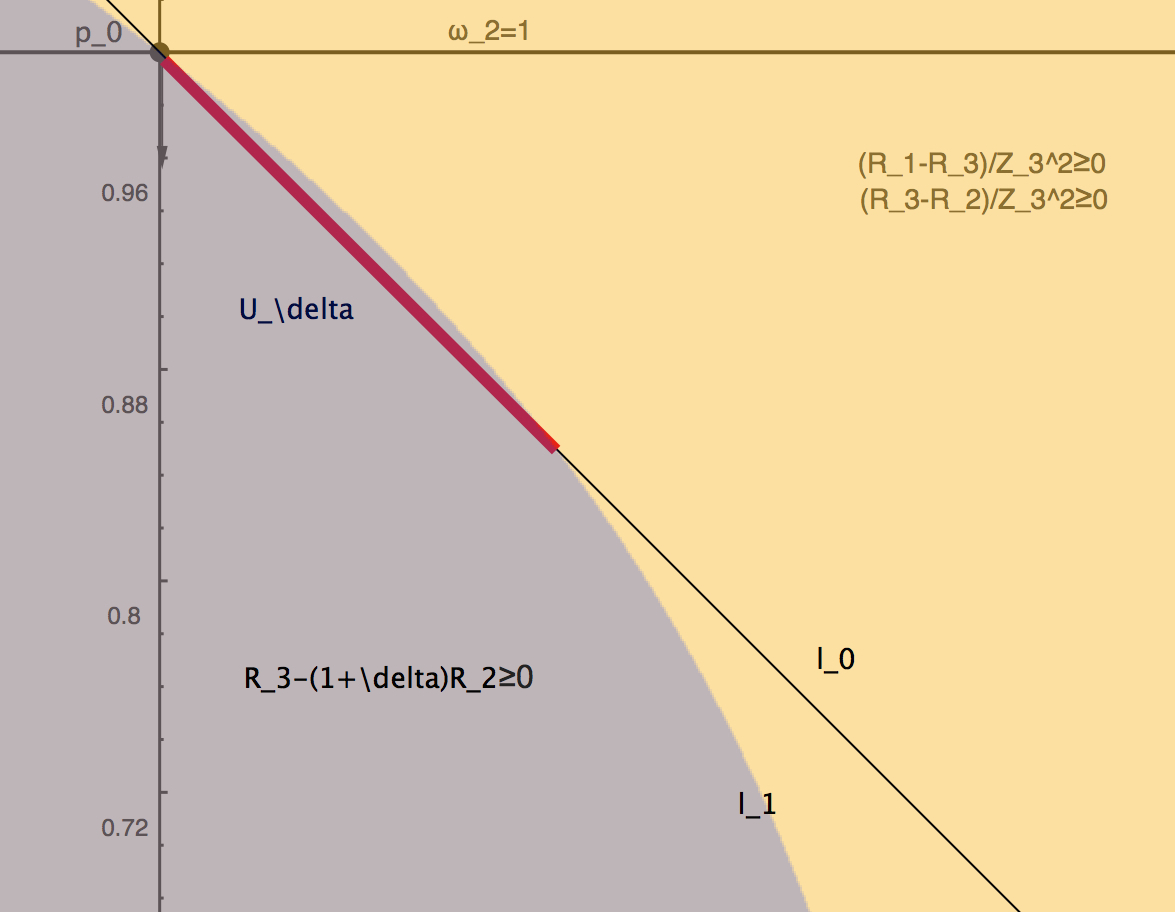}
\caption{$\delta=0.7$ for Case I}
\label{Fig 3}
\end{center}\end{figure}
Straightforward computation shows that $l_1$ intersect with $\omega_1+\omega_2=1$ at points $\left(0,1\right)$ and $\left(\frac{4b-\delta(d-1)}{a+2b},\frac{(d-1)(1+\delta)}{a+2b}\right)$. If $\delta \in \left(\frac{6b-a}{d-1},\frac{4b}{d-1}\right)$, then the second intersection point $\left(\frac{4b-\delta(d-1)}{a+2b},\frac{(d-1)(1+\delta)}{a+2b}\right)$ is in the region where $\omega_2-\omega_1> 0$. Hence $U_\delta$, denoted by the darker area in Figure \ref{Fig 3}, can include a segment of $l_0$ in $U_0$, represented by the bold segment, that is away from $\omega_1-\omega_2=0$.
\end{remark}

\begin{remark}
Note that Case I is the only case where the admissible $\delta$ must be positive.
\end{remark}

The entrance zone we construct is a subset of $U_\delta$. We impose $\delta\in \left(0,\frac{4b}{d-1}\right).$ As shown in the following technical proposition, $\delta>0$ is needed for the sake of conveniences. The first parameter in the definition of $B$ is also introduced.

\begin{proposition}
\label{from X to scalar curvature!}
In $U_\delta$, we can find a $p$ large enough such that 
\begin{equation}
\label{from X to scalar curvature}
((X_1-X_2)+(p-1)(X_3-X_2))(X_1-X_2+(p+1)(X_3-X_2))\geq \frac{1-\mathcal{G}}{d(d-1)}
\end{equation}
along $\gamma_{s_1}$ in $U_\delta$.
\end{proposition}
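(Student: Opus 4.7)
The plan is to recast the desired inequality as a single polynomial condition and then analyze it as a quadratic in $B := X_3 - X_2$. Setting $A := X_1 - X_2$, the left-hand side factors as $(A+pB)^{2}-B^{2}$. Using $\mathcal{H}\equiv 1$ (so $X_1+X_2+X_3 = 1/d$, hence $A+B = 1/d - 3X_{2}$) together with the elementary identity
\[
(X_1-X_2)^{2}+(X_2-X_3)^{2}+(X_1-X_3)^{2} \;=\; \tfrac{3\mathcal{G}}{d} - \tfrac{1}{d^{2}},
\]
I would rewrite $\frac{1-\mathcal{G}}{d(d-1)} = \frac{3d-1}{3d^{2}(d-1)} - \frac{2(A^{2}+B^{2}-AB)}{3(d-1)}$. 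Introducing $S := A+B$ and $f(p) := p(d-1)-d$, the inequality is equivalent, after multiplying by $3(d-1)$, to the polynomial statement
\[
Q(S,B;p) \;:=\; (3d-1)\!\left(S^{2}-\tfrac{1}{d^{2}}\right) + 6 f(p)\, SB + 3\bigl[(d-1)p(p-2)+2\bigr] B^{2} \;\geq\; 0,
\]
and $Q$ vanishes at $p_{0}$ (where $S=1/d$, $B=0$), consistent with equality at the initial point.

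Next I treat $Q$ as a quadratic in $B$ with leading coefficient $\alpha := 3((d-1)p(p-2)+2)>0$ for $p\geq 2$. Its constant-in-$B$ term factors as $-3(3d-1)X_{2}(2/d-3X_{2})$, which is negative precisely on the strip $X_{2}\in(0, 2/(3d))$. Propositions \ref{X_3+X_2>0} and \ref{compare X_3 with X_2} guarantee $B>0$ throughout $U_\delta$, and the latter in fact gives the stronger $B\geq \delta X_{2}$ whenever $X_{2}\geq 0$. The case $X_{2}\leq 0$ is immediate: both the constant-in-$B$ term and the coefficient $6f(p)S$ are non-negative once $p>d/(d-1)$, so $Q\geq 0$ for any $B\geq 0$. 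For $X_{2}>0$ the unique minimum of $Q$ in $B$ sits at $B^{\ast} = -3f(p)S/\alpha$; the asymptotics $\alpha \sim 3(d-1)p^{2}$ and $f(p)\sim(d-1)p$ yield $B^{\ast}=O(1/p)$, while $\delta X_{2}$ is bounded below by a positive constant once $X_{2}$ is. Hence for $p$ large, the admissible range $B\geq \delta X_{2}$ lies in the monotone-increasing portion of the parabola, and it suffices to verify $Q(\delta X_{2};S,p)\geq 0$.

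A direct expansion gives
\[
Q(\delta X_{2}) \;=\; X_{2}\!\left[\bigl(\alpha\delta^{2} - 18 f(p)\delta + 9(3d-1)\bigr)X_{2} \;+\; \tfrac{6(\delta f(p) - (3d-1))}{d}\right].
\]
Since $\alpha\delta^{2}\sim 3(d-1)\delta^{2}p^{2}$ dominates $18 f(p)\delta\sim 18(d-1)\delta p$, and $\delta f(p)-(3d-1)\to +\infty$, both factors inside the bracket become positive once $p$ is large enough; combined with $X_{2}>0$ this delivers $Q(\delta X_{2})\geq 0$. Tracking the small-$X_{2}$ regime one finds that the larger root $B_{+}$ of the quadratic satisfies $B_{+}\sim (3d-1)X_{2}/((d-1)p)$, so the bound $\delta X_{2}\geq B_{+}$ reduces to $p\geq (3d-1)/(\delta(d-1))$; the threshold on $p$ depends only on $d$ and $\delta$, so a single choice serves uniformly on $U_\delta$, and a fortiori along $\gamma_{s_{1}}$.

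The main obstacle is the strip $X_{2}\in(0,2/(3d))$: there the constant-in-$B$ term is genuinely negative and the \emph{unconstrained} quadratic in $B$ does dip below zero. What rescues the argument is the lower bound $B\geq \delta X_{2}$ that forces $B$ past the minimum $B^{\ast}$; this is exactly where the strict positivity $\delta>0$ demanded by Proposition \ref{R3-R2} becomes indispensable, since a vanishing $\delta$ would no longer confine $B$ to the safe region and the inequality would fail.
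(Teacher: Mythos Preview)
Your argument is correct and reaches the same conclusion as the paper, but via a somewhat more laborious parametrization. The paper chooses $\tilde X=X_2+X_3$ and $\tilde Y=X_3-X_2$ (both positive along $\gamma_{s_1}$ by Propositions~\ref{X_3+X_2>0}--\ref{compare X_3 with X_2}), rewrites the desired inequality as a polynomial in $(\tilde X,\tilde Y)$, and then uses the single constraint $(2+\delta)\tilde Y>\delta\tilde X$ to replace one factor of $\tilde Y$ in each of the two positive $\tilde Y$-terms by $\tfrac{\delta}{2+\delta}\tilde X$; this kills the two negative terms as soon as $p$ exceeds an explicit threshold (conditions~\eqref{lower bound for p}). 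Your $B=\tilde Y$ and your bound $B>\delta X_2$ are algebraically equivalent to $(2+\delta)\tilde Y>\delta\tilde X$, so the underlying mechanism is the same; the difference is that you run a full quadratic-in-$B$ analysis (vertex location, value at $B=\delta X_2$, sign split on $X_2$), whereas the paper's $(\tilde X,\tilde Y)$-substitution reduces everything to a two-line comparison because both variables are already positive. One wording issue in your write-up: the claim ``$B^{\ast}=O(1/p)$ while $\delta X_2$ is bounded below once $X_2$ is'' hides an $|S|$-dependence and is vacuous for small $X_2>0$; the clean uniform statement is that $\delta\alpha\geq 9f(p)$ for large $p$ forces $\delta X_2\geq B^{\ast}$ for \emph{every} $X_2\geq 0$, which is what your computation of $Q(\delta X_2)$ actually needs.
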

\begin{proof}
Since $X_1=\frac{1}{d}-X_2-X_3$, we can write inequality \eqref{from X to scalar curvature} with respect to $\tilde{X}=X_3+X_2$ and $\tilde{Y}=X_3-X_2$. Straight forward computation shows that inequality \eqref{from X to scalar curvature} is equivalent to 
\begin{equation}
\label{huge inequality in X}
\begin{split}
&\left(\left(p-\frac{1}{2}\right)\left(p+\frac{3}{2}\right)+\frac{1}{2(d-1)}\right)\tilde{Y}^2-\left(3p+\frac{3}{2}\right)\tilde{X}\tilde{Y}\\
&+\left(\frac{9}{4}+\frac{3}{2(d-1)}\right)\tilde{X}^2+\frac{2p+1}{d}\tilde{Y}-\frac{3d-1}{d(d-1)}\tilde{X}\geq 0.
\end{split}
\end{equation}
Note that $\tilde{X}$ and $\tilde{Y}$ are positive along $\gamma_{s_1}$ in $U_\delta$ by Proposition \ref{X_3+X_2>0} and \ref{compare X_3 with X_2}.
Moreover, in $U_\delta$, we have $X_3- (1+\delta)X_2> 0$ along $\gamma_{s_1}$ by Proposition \ref{compare X_3 with X_2}. Rewrite this condition in terms of $\tilde{X}$ and $\tilde{Y}$ so we have 
$
(2+\delta)\tilde{Y}-\delta\tilde{X}> 0
$ along $\gamma_{s_1}$ in $U_\delta$. Hence the LHS of \eqref{huge inequality in X} is larger than
\begin{equation}
\begin{split}
&\left(\left(\left(p-\frac{1}{2}\right)\left(p+\frac{3}{2}\right)+\frac{1}{2(d-1)}\right)\frac{\delta}{2+\delta}-\left(3p+\frac{3}{2}\right)\right)\tilde{X}\tilde{Y}\\
&+\left(\frac{9}{4}+\frac{3}{2(d-1)}\right)\tilde{X}^2+\left(\frac{2p+1}{d}\frac{\delta}{2+\delta}-\frac{3d-1}{d(d-1)}\right)\tilde{X}
\end{split}.
\end{equation}
Since $\delta\in \left(0,\frac{4b}{d-1}\right)$ is fixed, we can choose $p$ large enough so that
\begin{equation}
\begin{split}
\label{lower bound for p}
&\left(\left(p-\frac{1}{2}\right)\left(p+\frac{3}{2}\right)+\frac{1}{2(d-1)}\right)\frac{\delta}{2+\delta}\geq 3p+\frac{3}{2}\\
&\frac{2p+1}{d}\frac{\delta}{2+\delta}\geq \frac{3d-1}{d(d-1)}
\end{split}
\end{equation}
are satisfied. Then inequality \eqref{huge inequality in X} is satisfied.
\end{proof}

Now we are ready to reveal the definition for $B$ and its last parameter. Define 
$$B_{p,k}(Z_1,Z_2,Z_3):=kZ_1Z_3^{p+1}-Z_2^p(Z_3-Z_2)^2.$$ For a fixed $\delta\in \left(0,\frac{4b}{d-1}\right)$, choose a $p$  that  satisfies inequalities \eqref{lower bound for p}. Then define
\begin{equation}
\label{yes}
\begin{split}
U_{(\delta,p,k)}&=S_3\cap \{Z_1+Z_2-Z_3\leq 0\}\cap \{B_{p,k}\geq 0\}\\
&\quad \cap\{(Z_3-Z_2)(X_1-X_3)+(p(Z_3-Z_2)-2Z_2)(X_3-X_2)\geq 0\},
\end{split}
\end{equation}
More requirements on the choice of $p$ and $k$ are added later. Before that, we prove the following.
\begin{proposition}
\label{initial trap}
For any fixed $k>0$, $\gamma_{s_1}$ is initially trapped in $U_{(\delta,p,k)}$ as long as $s_1\in \left(0,\sqrt{\frac{ks_0(d+1)}{16d}}\right)$.
\end{proposition}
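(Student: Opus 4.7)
The plan is to substitute the linearized unstable solution \eqref{linearized solution<} into each of the four defining inequalities of $U_{(\delta,p,k)}$ and verify that the sign is correct for $\eta$ sufficiently close to $-\infty$. Since Propositions \ref{T_3} and \ref{S_3} already ensure $\gamma_{s_1}\subset S_3$ initially whenever $s_1\geq 0$, only the remaining three conditions need attention. Writing $\epsilon_1:=e^{\eta/d}$ and $\epsilon_2:=\epsilon_1^2=e^{2\eta/d}$ as bookkeeping parameters (so $\epsilon_2=o(\epsilon_1)$ as $\eta\to-\infty$), the linearization gives
\begin{equation*}
Z_1=(d+1)s_0\epsilon_2,\quad Z_3-Z_2=4s_1\epsilon_1,\quad X_3-X_2=2s_1\epsilon_1,\quad X_1-X_3=\tfrac{1}{d}-s_1\epsilon_1-3as_0\epsilon_2,
\end{equation*}
together with $Z_2, Z_3=\tfrac{1}{d}+O(\epsilon_1)$.

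Plugging these in, the condition $Z_1+Z_2-Z_3\leq 0$ reduces to $(d+1)s_0\epsilon_2-4s_1\epsilon_1\leq 0$, which holds for every $s_1>0$ and sufficiently small $\epsilon_1$. The fourth defining inequality is more delicate: the leading $\epsilon_1$ contributions of its two summands, namely $(4s_1\epsilon_1)(1/d)$ from $(Z_3-Z_2)(X_1-X_3)$ and $(-2/d)(2s_1\epsilon_1)$ from $(p(Z_3-Z_2)-2Z_2)(X_3-X_2)$, cancel identically, so one must expand to order $\epsilon_2$. A direct computation yields
\begin{equation*}
(Z_3-Z_2)(X_1-X_3)+(p(Z_3-Z_2)-2Z_2)(X_3-X_2) = (8p+4)s_1^2\epsilon_2 + O(\epsilon_1^3),
\end{equation*}
which is strictly positive since $p>0$ by Proposition \ref{from X to scalar curvature!}. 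Hence this inequality is also automatic, independently of the precise value of $s_1$.

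The genuine constraint on $s_1$ comes from $B_{p,k}\geq 0$. Both $kZ_1 Z_3^{p+1}$ and $Z_2^p(Z_3-Z_2)^2$ are of leading order $\epsilon_2$, and expansion gives
\begin{equation*}
B_{p,k} = \frac{k(d+1)s_0 - 16ds_1^2}{d^{p+1}}\,\epsilon_2 + O(\epsilon_1\epsilon_2),
\end{equation*}
so the leading coefficient is positive exactly when $s_1^2<\frac{k(d+1)s_0}{16d}$, reproducing the stated range. The main obstacle is the unexpected cancellation of the leading $\epsilon_1$ terms in the fourth condition, which forces a next-to-leading-order expansion; once that cancellation is identified, the rest of the verification is a routine Taylor expansion that picks out $B_{p,k}$ as the binding constraint on $s_1$.
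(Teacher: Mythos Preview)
Your proof is correct and follows essentially the same approach as the paper: both substitute the linearized solution \eqref{linearized solution<} into each defining inequality of $U_{(\delta,p,k)}$ and read off the leading-order behavior, arriving at the same expansions $(d+1)s_0\epsilon_2-4s_1\epsilon_1$, $\bigl(k(d+1)s_0/d-16s_1^2\bigr)d^{-p}\epsilon_2$, and $(8p+4)s_1^2\epsilon_2$ for the three conditions beyond $S_3$. Your explicit remark about the cancellation of the $\epsilon_1$ terms in the fourth inequality is a helpful clarification that the paper leaves implicit in \eqref{linearised deriv}, but otherwise the arguments are identical.
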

\begin{proof}
With discussion in Section \ref{Completensese}, we know that $\gamma_{s_1}$ is initially in $S_3$ if $s_1>0$. Since all the other inequalities in \eqref{yes} reach equality at $p_0$, we need to substitute solution \eqref{linearized solution<} of linearized equation in each one of them. For $Z_1+Z_2-Z_3$,
we have 
\begin{equation}
(d+1)s_0e^{\frac{2\eta}{d}}-4s_1e^{\frac{\eta}{d}}\sim-4s_1e^{\frac{\eta}{d}}<0
\end{equation}
if $s_1<0$.

Substitute solution \eqref{linearized solution<} of linearized equation to $kZ_1Z_3^{p+1}-Z_2^p(Z_3-Z_2)^2$, we have
\begin{equation}
\begin{split}
&k(d+1)s_0e^{\frac{2\eta}{d}}\left(\frac{1}{d}-as_0e^{\frac{2\eta}{d}}+2s_1e^{\frac{\eta}{d}}\right)^{p+1}-\left(\frac{1}{d}-as_0e^{\frac{2\eta}{d}}-2s_1e^{\frac{\eta}{d}}\right)^p 16 s_1^2e^\frac{2\eta}{d}\\
&\sim \left(\frac{1}{d}\right)^p\left(\frac{ks_0(d+1)}{d}-16s_1^2\right) e^{\frac{2\eta}{d}}.
\end{split}
\end{equation}
Hence $kZ_1Z_3^{p+1}-Z_2^p(Z_3-Z_2)^2>0$ initially along the projection of $\gamma_{s_1}$ when $s_1^2<\frac{ks_0(d+1)}{16d}.$

Finally, for $(Z_3-Z_2)(X_1-X_3)+(p(Z_3-Z_2)-2Z_2)(X_3-X_2)$, we have
\begin{equation}
\label{linearised deriv}
4s_1e^{\frac{\eta}{d}}\left(\frac{1}{d}-3as_0e^{\frac{2\eta}{d}}-s_1e^{\frac{\eta}{d}}\right)+\left(4ps_1e^{\frac{\eta}{d}}-2\left(\frac{1}{d}-as_0e^{\frac{2\eta}{d}}-2s_1e^{\frac{\eta}{d}}\right)\right)2s_1e^{\frac{\eta}{d}}\sim (4+8p)s_1^2e^{\frac{2\eta}{d}}>0.
\end{equation}

Hence $\gamma_{s_1}$ is indeed trapped in $U_{(\delta,p,k)}$ initially when $s_1\in \left(0,\sqrt{\frac{ks_0(d+1)}{16d}}\right)$.
\end{proof}

We now specify our choice for $p$ and $k$. Projected to the $\omega_1\omega_2$-plane, the first two inequalities in \eqref{yes} is equivalent to
$$
\frac{\omega_2^p(1-\omega_2)^2}{k}\leq \omega_1\leq 1-\omega_2.
$$
Write $l_0$ as a function $\mathcal{C}_0(\omega_2)=1-\omega_2$. Define $l_2\colon \mathcal{C}_2(\omega_2)=\frac{\omega_2^p(1-\omega_2)^2}{k}$.
It is clear that $\mathcal{C}_0-\mathcal{C}_2=0$ at $\omega_2=1$.
Our goal is to choose $p$ and $k$ so that $\mathcal{C}_0-\mathcal{C}_2$ vanishes again at some $\omega_*<1$. Then we define $\hat{U}_{(\delta,p,k)}$ to be the compact subset of $U_{(\delta,p,k)}$ where $\omega_2\in [\omega_*,1]$ and $\mathcal{C}_0>\mathcal{C}_2$ for $\omega_2\in (\omega_*,1)$. Moreover, because we want to utilize Proposition \ref{from X to scalar curvature!}, parameters $p$ and $k$ are chosen to guarantee that $\omega_*$ is not too small so that $\hat{U}_{(\delta,p,k)}\subset U_{\delta}.$ Specifically, we have the following proposition.

\begin{proposition}
\label{verytechnicalthatitsiboring}
Let $p\geq 2$ be a fixed number large enough that it satisfies inequalities \eqref{lower bound for p} and 
\begin{equation}
\label{more lower bound for p}
\frac{p}{p+1}\geq \frac{(d-1)(1+\delta)}{a+2b}.
\end{equation}
Let $k>0$ be a number small enough so that
\begin{equation}
\label{small k}
k<\left(\frac{p}{p+1}\right)^p\frac{1}{p+1}.
\end{equation}
Then there exists some $\omega_*\in \left(\frac{p}{p+1},1\right)$ such that
\begin{equation}
\label{hatyes}
\hat{U}_{(\delta,p,k)}:=U_{(\delta,p,k)}\cap \{Z_2-\omega_*Z_3\geq 0\}
\end{equation} is a compact subset of $U_\delta$.
\end{proposition}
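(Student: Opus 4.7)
The plan is to unpack the condition $\mathcal{C}_0(\omega_2)\geq \mathcal{C}_2(\omega_2)$ in terms of the single-variable function $\psi(\omega_2):=\omega_2^p(1-\omega_2)$, and then check compactness and the containment $\hat U_{(\delta,p,k)}\subset U_\delta$ one ingredient at a time. First, observe that
\[
\mathcal{C}_0(\omega_2)-\mathcal{C}_2(\omega_2)=(1-\omega_2)\Bigl(1-\tfrac{1}{k}\psi(\omega_2)\Bigr),
\]
so on $(0,1)$ the sign of $\mathcal{C}_0-\mathcal{C}_2$ is governed by whether $\psi$ lies below $k$. A direct calculation of $\psi'$ shows that $\psi$ attains its maximum on $[0,1]$ at $\omega_2=\tfrac{p}{p+1}$ with value $\bigl(\tfrac{p}{p+1}\bigr)^{p}\tfrac{1}{p+1}$, strictly greater than $k$ by the hypothesis \eqref{small k}. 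Hence $\psi(\omega_2)=k$ has exactly two roots in $(0,1)$, one on either side of $\tfrac{p}{p+1}$. I will define $\omega_*$ to be the larger root, so $\omega_*\in(\tfrac{p}{p+1},1)$, $\mathcal{C}_2(\omega_*)=\mathcal{C}_0(\omega_*)$, and $\mathcal{C}_2\leq \mathcal{C}_0$ throughout $[\omega_*,1]$.

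Next I will verify compactness. Closedness of $\hat U_{(\delta,p,k)}$ is immediate since it is a finite intersection of closed sets. For boundedness, on $\hat U_{(\delta,p,k)}$ the quotients $\omega_1,\omega_2$ lie in the compact subregion $\{\omega_*\leq \omega_2\leq 1,\ \mathcal{C}_2(\omega_2)\leq \omega_1\leq 1-\omega_2\}$ of $[0,1]^2$. To bound $Z_3$, I will use the conservation law \eqref{conservation law}, which forces $d\sum_j \mathcal{R}_j\leq 1$ because $\mathcal{G}\geq 0$. Dividing $\sum_j\mathcal{R}_j=a(Z_1Z_2+Z_2Z_3+Z_1Z_3)-b(Z_1^2+Z_2^2+Z_3^2)$ by $Z_3^2$ gives a continuous function of $(\omega_1,\omega_2)$ which evaluates to the strictly positive quantity $a-2b=d-1$ at $(0,1)$; by continuity and our range of $\omega_2$, it admits a positive lower bound $C>0$ on the compact $(\omega_1,\omega_2)$-region. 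Then $Z_3^2\leq 1/(dC)$, and since $\omega_j\leq 1$ all $Z_j$ are bounded. Finally, $\mathcal{G}\leq 1$ together with $\mathcal{H}\equiv 1$ bounds the $X_j$'s.

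The containment $\hat U_{(\delta,p,k)}\subset U_\delta$ reduces, given $\hat U_{(\delta,p,k)}\subset S_3\cap\{Z_1+Z_2\leq Z_3\}$, to checking the three inequalities $\mathcal{R}_1\geq \mathcal{R}_3$, $\mathcal{R}_3\geq \mathcal{R}_2$, and $\mathcal{R}_3\geq(1+\delta)\mathcal{R}_2$. Using the factorizations $\mathcal{R}_1-\mathcal{R}_3=(Z_3-Z_1)(aZ_2-2b(Z_3+Z_1))$ and $\mathcal{R}_3-\mathcal{R}_2=(Z_3-Z_2)(2b(Z_3+Z_2)-aZ_1)$, together with $\omega_1\leq 1-\omega_2$, the first two reduce to $\omega_2\geq \tfrac{4b}{a+2b}$ and $\omega_2\geq\tfrac{d-1}{a+2b}$ respectively; both are implied by $\omega_*>\tfrac{p}{p+1}\geq \tfrac{(d-1)(1+\delta)}{a+2b}$, noting $(d-1)(1+\delta)\geq 4b$ within the admissible range of $\delta$.

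The main technical step, and the place I expect the only real work, is the third inequality $\mathcal{R}_3-(1+\delta)\mathcal{R}_2\geq 0$. After dividing by $Z_3^2$, I will treat the expression
\[
\phi(\omega_1,\omega_2):=\delta b\,\omega_1^2+a(\omega_2-1-\delta)\omega_1+(2+\delta)b(1-\omega_2^2)
\]
as a convex quadratic in $\omega_1$ (since $\delta b>0$). I will locate its vertex $\omega_1^*=\tfrac{a(1+\delta-\omega_2)}{2\delta b}$ and show $\omega_1^*>1-\omega_2$ throughout $[\omega_*,1)$, which amounts to the algebraic inequality $\delta a>0$ after clearing denominators. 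Consequently $\phi$ is monotone decreasing in $\omega_1$ on $[0,1-\omega_2]$, and it suffices to verify non-negativity at the right endpoint $\omega_1=1-\omega_2$. There $\phi$ factors as $(1-\omega_2)\bigl[(a+2b)\omega_2-(d-1)(1+\delta)\bigr]$, which is non-negative precisely because of hypothesis \eqref{more lower bound for p} combined with $\omega_2\geq\omega_*>\tfrac{p}{p+1}$. This completes the argument, and the only delicate calculation is the vertex/endpoint analysis of $\phi$.
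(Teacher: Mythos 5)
Your proposal is correct and follows essentially the same route as the paper: locate $\omega_*$ as the larger root of $\omega_2^p(1-\omega_2)=k$ beyond the maximizer $\tfrac{p}{p+1}$, use $\omega_*>\tfrac{p}{p+1}\geq\tfrac{(d-1)(1+\delta)}{a+2b}$ to check the two $U_0$-inequalities, reduce $\mathcal{R}_3-(1+\delta)\mathcal{R}_2\geq 0$ by a convexity/monotonicity argument to its value on $Z_3=Z_1+Z_2$, where it factors as $(1-\omega_2)\bigl((a+2b)\omega_2-(d-1)(1+\delta)\bigr)$, and get boundedness from the conservation law (the paper routes this last step through Proposition \ref{sharp}, you through the compact $\omega$-region; either works). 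Two steps need a line of patching: the vertex inequality $\omega_1^*>1-\omega_2$ does not reduce to ``$\delta a>0$'' but to $a\delta+(1-\omega_2)(a-2\delta b)>0$, which requires the additional (true) check $a>2\delta b$, following from $\delta<\tfrac{4b}{a-2b}$ and $(a-4b)(a+2b)>0$; and in the compactness argument, positivity of $\sum_j\mathcal{R}_j/Z_3^2$ at the single point $(0,1)$ plus continuity does not yield a positive lower bound on the whole region --- you must verify positivity everywhere there, which does hold since $\omega_2\geq\tfrac{2}{3}$, $\omega_1\leq\tfrac{1}{3}$ and $a\geq 6b$ give $a(\omega_1\omega_2+\omega_1+\omega_2)-b(\omega_1^2+\omega_2^2+1)\geq\tfrac{2a}{3}-\tfrac{19b}{9}>0$.
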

\begin{proof}
Although the proposition is true as long as $p>0,$ the technical condition $p\geq 2$ is imposed for computations  in Lemma \ref{upper X_3} and \eqref{Bump Derivative}.
We first claim that $p$ exists. Because $\delta$ is a fixed number in $(0,\frac{4b}{d-1})$, we have $\frac{(d-1)(1+\delta)}{a+2b}<\frac{d-1+4b}{a+2b}=1.$
Hence we can choose $p$ large enough on top of inequalities \eqref{lower bound for p} to satisfies inequalities \eqref{more lower bound for p}.

Consider the function 
\begin{equation}
\label{function C}
\mathcal{C}=\mathcal{C}_0-\mathcal{C}_2=1-\omega_2-\frac{\omega_2^p(1-\omega_2)^2}{k}=\frac{1-\omega_2}{k}\left(k-\omega_2^p(1-\omega_2)\right).
\end{equation}
It is clear that $\mathcal{C}$ vanishes at $\omega_2=1$ and $\mathcal{C}>0$ near that point.
Let $\tilde{\mathcal{C}}=k-\omega_2^p(1-\omega_2)$. Since
$
\frac{d\tilde{\mathcal{C}}}{d\omega_2}=\omega_2^{p-1}(\omega_2-p(1-\omega_2)),
$
we have
$$
\begin{tabular}{  c|c c  c  c c} 
\hline
$\omega_2$ & $0$ & $\left(0,\frac{p}{p+1}\right)$ &$\frac{p}{p+1}$& $\left(\frac{p}{p+1},1\right)$& $1$\\ 
\hline
$\frac{d\tilde{\mathcal{C}}}{d\omega_2}$ & $0$ & $<0$ & $0$ & $>0$ & 1\\ 
\hline
$\tilde{\mathcal{C}}$ & $k$ & \text{Decrease} & \text{Local Minimum}& \text{Increase}&$k$\\ 
\hline
\end{tabular}.
$$
Therefore, for an arbitrary $p$, inequality \eqref{small k} is satisfied if and only if $\tilde{\mathcal{C}}\left(\frac{p}{p+1}\right)<0$. Then there exists some $\omega_*\in \left(\frac{p}{p+1},1\right)$ such that $\tilde{\mathcal{C}}(\omega_*)=0$ and $\tilde{\mathcal{C}}(\omega_2)> 0$ in $\left(\omega_*,1\right)$. Since $\omega_2\leq 1$, that means for such an $\omega_*$, we must have $\mathcal{C}(\omega_*)=0$ and $\mathcal{C}(\omega_2)> 0$ in $\left(\omega_*,1\right)$.

The $\omega_2$-coordinate of the intersection point between $l_0$ and $l_1$ is $\frac{(d-1)(1+\delta)}{a+2b}$. By \eqref{more lower bound for p} and Remark \ref{scalr of spher basic ineq}, the root $\omega_*$ discussed above satisfies 
\begin{equation}
\label{thankyouomega*}
\omega_*>\frac{p}{p+1}\geq \frac{(d-1)(1+\delta)}{a+2b}>\frac{a-2b}{a+2b}
\end{equation}

We are ready to prove that $\hat{U}_{(\delta,p,k)}\subset U_\delta$. In other words, with our choice of $p$ and $k$ above, inequalities in the definition \eqref{yes} of $U_{(\delta,p,k)}$ and \eqref{hatyes} of $\hat{U}_{(\delta,p,k)}$ imply all inequalities in definition \eqref{U_0} of $U_0$ and \eqref{Udelta} of $U_\delta$.

Firstly, we need to show $\hat{U}_{(\delta,p,k)}\subset U_0$. With $-Z_1\geq Z_2-Z_3$ and $Z_2\geq \omega_*Z_3$ satisfied in $S_3$, we have
\begin{equation}
\begin{split}
\label{living in U0 1}
\mathcal{R}_1-\mathcal{R}_3 & =(Z_3-Z_1)(aZ_2-2bZ_1-2bZ_3)\\
&\geq  (Z_3-Z_1)((a+2b)Z_3\omega_*-4bZ_3)\\ 
&\geq (Z_3-Z_1)((a+2b)Z_3\omega_*-(a-2b)Z_3)\quad \text{Remark \ref{scalr of spher basic ineq}}\\
&\geq 0 \quad \text{by \eqref{thankyouomega*} and definition of $S_3$}
\end{split}
\end{equation}
and
\begin{equation}
\begin{split}
\label{living in U0 2}
\mathcal{R}_3-\mathcal{R}_2 & =(Z_3-Z_2)(2bZ_3+2bZ_2-aZ_1)\\
&\geq  (Z_3-Z_2)((a+2b)\omega_*Z_3-(a-2b)Z_3) \\
&\geq 0 \quad \text{by \eqref{thankyouomega*} and definition of $S_3$}
\end{split}.
\end{equation}
Hence $\hat{U}_{(\delta,p,k)}\subset U_0$. 

In $\hat{U}_{(\delta,p,k)}$, we have 
\begin{equation}
\begin{split}
\mathcal{R}_3-(1+\delta)\mathcal{R}_2&=(Z_3-Z_2)(2b(Z_3+Z_2)-aZ_1)-\delta(aZ_1Z_3+b(Z_2^2-Z_1^2-Z_3^2))\\
&=2b(Z_3^2-Z_2^2)-aZ_1(Z_3-Z_2)-\delta a Z_1Z_3-\delta b Z_2^2+\delta b Z_1^2+\delta b Z_3^2\\
&=(2+\delta )bZ_3^2-(1+\delta)aZ_1Z_3+\delta bZ_1^2-(\delta b+2b) Z_2^2+aZ_1Z_2.
\end{split}
\end{equation}
Treat the result of the computation above as a function of $Z_3$. It is a parabola centered at $\frac{(1+\delta)a}{(2+\delta)2b}Z_1.$ By \eqref{thankyouomega*}, it is clear that $\frac{1}{1-\omega_*}>\frac{a+2b}{4b}$. Since $\delta\in\left(0,\frac{4b}{a-2b}\right)$, it is straightforward to deduce that $\frac{a+2b}{4b}>\frac{(1+\delta)a}{(2+\delta)2b}$. From $Z_2\geq \omega_*Z_3\geq \omega_*(Z_1+Z_2)$ we also deduce
\begin{equation}
\label{Z_1Z_2}
Z_2\geq \frac{\omega_*}{1-\omega*}Z_1.
\end{equation}
Therefore, we know that $
Z_1+Z_2\geq \frac{1}{1-\omega_*}Z_1\geq \frac{a+2b}{4b}Z_1\geq \frac{(1+\delta)a}{(2+\delta)2b}Z_1
$
in $\hat{U}_{(\delta,p,k)}$.
%
Hence
\begin{equation}
\begin{split}
\mathcal{R}_3-(1+\delta)\mathcal{R}_2&\geq  \left.(\mathcal{R}_3-(1+\delta)\mathcal{R}_2)\right|_{Z_3=Z_1+Z_2}\\
&\geq 0 \quad\text{by \eqref{R3R2}}
\end{split}.
\end{equation}

Finally, we need to show that $\hat{U}_{(\delta,p,k)}$ is compact. Since  $Z_2-\omega_*Z_3\geq 0$, we automatically have
$
Z_1+Z_2\geq \omega_*Z_3
$
in $\hat{U}_{(\delta,p,k)}$. By \eqref{thankyouomega*}, we can deduce $\omega_*>\frac{a-2b}{a+2b}>\frac{2b}{a}$
in $\hat{U}_{(\delta,p,k)}$, where the last inequality is from Remark \ref{scalr of spher basic ineq}. Hence $a(Z_1+Z_2)-2b Z_3\geq 0$ in $\hat{U}_{(\delta,p,k)}$. Proposition \ref{sharp} can be applied and all $Z_j$'s are bounded above. By the conservation law \eqref{conservation law}, we know that all variables are bounded. Hence $\hat{U}_{(\delta,p,k)}$ is compact. 
The proof is complete.
\end{proof}

\begin{figure}[h]
\label{Fig 4}
\begin{center}
\includegraphics[width=3in]{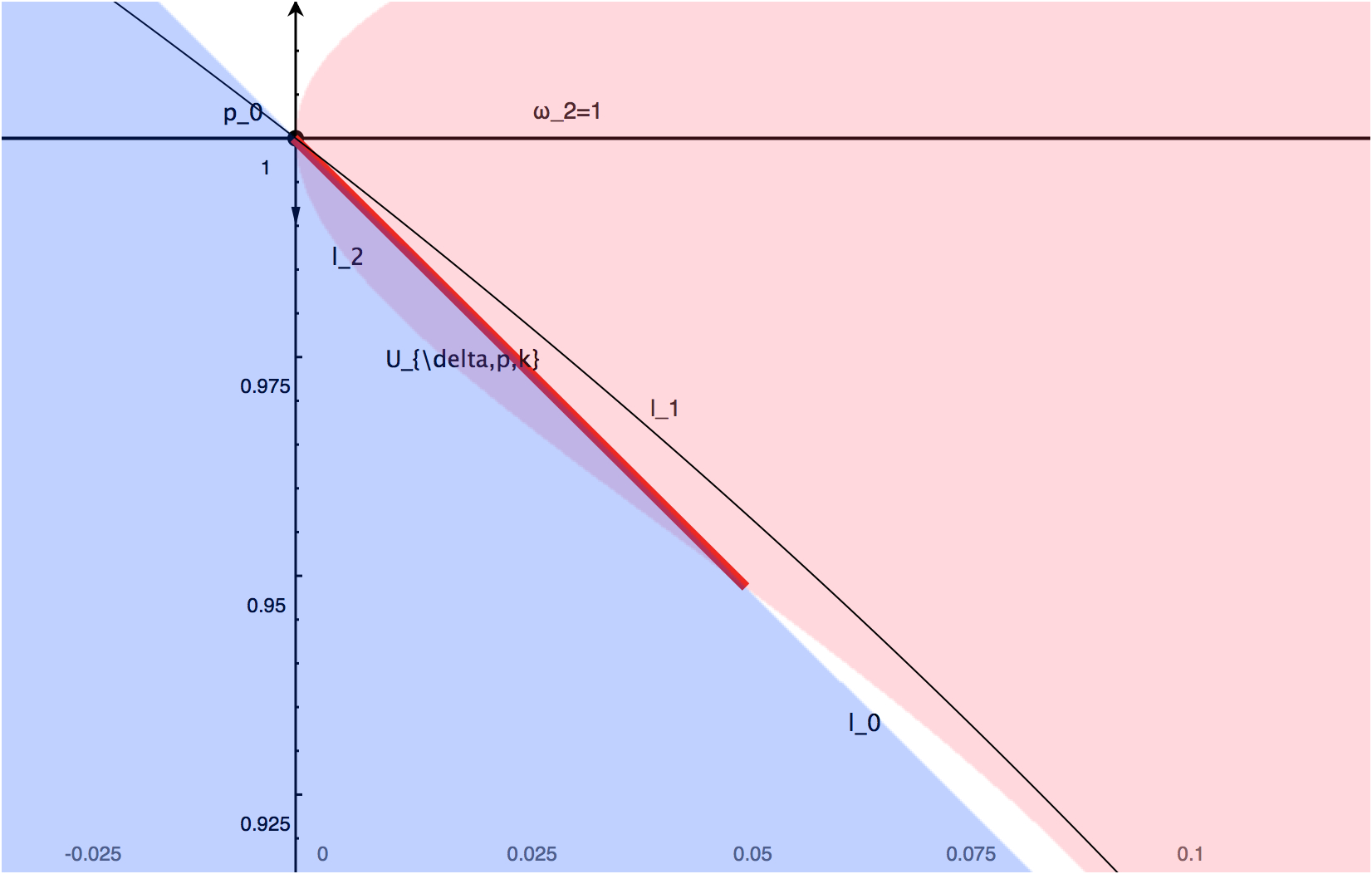}
\caption{$\delta=0.7$, $p=12$, $k=\frac{1}{13+1}\left(\frac{12}{13}\right)^{12}$ for Case I}
\end{center}\end{figure}

We are ready show that $\hat{U}_{(\delta,p,k)}$ is the entrance zone.
\begin{lemma}
\label{only through l0}
For $s_1\in\left(0,\sqrt{\frac{k(d+1)s_0}{16d}}\right)$ and suitable choice of $\delta,p$ and $k$ as described above, the integral curve $\gamma_{s_1}$ escapes $\hat{U}_{(\delta,p,k)}$ through $Z_1+Z_2-Z_3=0$.
\end{lemma}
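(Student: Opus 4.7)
The plan is to verify that $V$ points into $\hat{U}_{(\delta,p,k)}$ along every face of its boundary except the escape face $\{Z_1+Z_2-Z_3=0\}$, and then to show that $\gamma_{s_1}$ cannot remain in the compact set $\hat{U}_{(\delta,p,k)}$ indefinitely, so its first exit must be through the escape face. The boundary decomposes into the pieces inherited from $\partial S_3$ (handled by Proposition \ref{T_3} or Proposition \ref{S_3} since $s_1>0$), the escape face, and the three new faces $\{B_{p,k}=0\}$, $\{D=0\}$ (with $D$ the third defining expression in \eqref{yes}), and $\{Z_2-\omega_*Z_3=0\}$; Proposition \ref{initial trap} has already placed $\gamma_{s_1}$ inside $\hat{U}_{(\delta,p,k)}$ initially.

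For $\{B_{p,k}=0\}$ a direct computation using $\mathcal{H}\equiv 1$ gives
$$
\left.\langle\nabla B_{p,k},V\rangle\right|_{B_{p,k}=0}=2Z_2^{p}(Z_3-Z_2)\,D,
$$
which is non-negative because $D\geq 0$ and $Z_3\geq Z_2\geq 0$ throughout $\hat{U}_{(\delta,p,k)}$. For $\{Z_2-\omega_*Z_3=0\}$ I will use a geometric observation instead of a vector-field computation: the defining identity $k(1-\omega_*)=\omega_*^{p}(1-\omega_*)^{2}$ of $\omega_*$ from Proposition \ref{verytechnicalthatitsiboring} makes $B_{p,k}\geq 0$ at $\omega_2=\omega_*$ equivalent to $\omega_1\geq 1-\omega_*$, and together with $\omega_1+\omega_2\leq 1$ this forces $\omega_1+\omega_2=1$, i.e.\ $Z_1+Z_2=Z_3$. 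Any exit through $\{Z_2=\omega_*Z_3\}$ is therefore simultaneous with an exit through the escape face.

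The hard part is the face $\{D=0\}$. A direct differentiation, after the $(\mathcal{G}-1)$ and $(\mathcal{G}-1/d)$ contributions combine into a multiple of $D$ that vanishes on the face, yields
\begin{align*}
\left.\langle\nabla D,V\rangle\right|_{D=0}
&=2(Z_3X_3-Z_2X_2)(X_1-X_3)+2(pZ_3X_3-(p+2)Z_2X_2)(X_3-X_2)\\
&\quad+(Z_3-Z_2)(\mathcal{R}_1-\mathcal{R}_3)+(p(Z_3-Z_2)-2Z_2)(\mathcal{R}_3-\mathcal{R}_2).
\end{align*}
I would rewrite the $X$-bilinear terms in $(X_1-X_2,X_3-X_2)$, dominate them from below using Proposition \ref{from X to scalar curvature!}, and convert the resulting bound to the $Z$-side via the conservation law $1-\mathcal{G}=d\sum_j\mathcal{R}_j$. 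This reduces the desired inequality to a polynomial inequality in $(Z_1,Z_2,Z_3)$ alone, to be verified using $Z_3\geq Z_2\geq \omega_*Z_3$ and the estimate chain $\omega_*>p/(p+1)\geq(d-1)(1+\delta)/(a+2b)>(a-2b)/(a+2b)$ from \eqref{thankyouomega*}. The choices of $p$, $k$, and $\delta$ arranged by \eqref{lower bound for p}, \eqref{more lower bound for p}, \eqref{small k} are exactly what make this polynomial inequality work; this is the longest and most delicate calculation of the lemma.

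For the non-trapping step I will invoke LaSalle's invariance principle. Proposition \ref{compare X_3 with X_2} together with Proposition \ref{X_3+X_2>0} gives $X_3-X_2>0$ strictly on $\hat{U}_{(\delta,p,k)}\subset U_\delta$, so $\omega_2=Z_2/Z_3$ is strictly decreasing along $\gamma_{s_1}$; Proposition \ref{at least to Z_1=Z_2} gives $(Z_1/Z_2)'\geq 0$ along $\gamma_{s_1}$, strict whenever $Z_2>Z_1$, and the defining inequalities $\mathcal{R}_1-\mathcal{R}_3\geq 0$ and $\mathcal{R}_3-\mathcal{R}_2\geq 0$ of $U_0$ add up to give $Z_1\leq Z_2$ throughout $\hat{U}_{(\delta,p,k)}\subset U_0$. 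If $\gamma_{s_1}$ were trapped in $\hat{U}_{(\delta,p,k)}$ for all forward $\eta$, LaSalle would force its non-empty $\omega$-limit set to lie in the largest flow-invariant subset of $\{X_1=X_2\}\cap\{X_2=X_3\}\cap\{\mathcal{H}\equiv 1\}=\{X_1=X_2=X_3=1/(3d)\}$, which consists only of critical points. But inspection of Proposition \ref{critical points} shows that no critical point with $X_1=X_2=X_3$ lies in $\hat{U}_{(\delta,p,k)}$: $p_1$ has $Z_1+Z_2=2Z_3>Z_3$, and the only counterpart of $p_2$ compatible with $S_3$ has $\omega_2=2b/(d-1)<\omega_*$ by the estimate chain above. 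This contradiction shows $\gamma_{s_1}$ reaches $\partial\hat{U}_{(\delta,p,k)}$ in finite $\eta$, and by the preceding analysis the first exit is through $\{Z_1+Z_2-Z_3=0\}$.
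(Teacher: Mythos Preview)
Your approach matches the paper's: the same computation on the face $\{B_{p,k}=0\}$, the same reduction of the face $\{Z_2=\omega_*Z_3\}$ to one of the other faces, and the same strategy on $\{D=0\}$ of using the relation $D=0$ to rewrite the $X$-bilinear part as $(Z_3-Z_2)$ times the quadratic in Proposition~\ref{from X to scalar curvature!}, then converting via the conservation law to a polynomial inequality in the $Z_j$ alone. In the paper this last polynomial inequality is actually computed and verified (it reduces to showing a certain quadratic $\pi(\omega_2)$ is nonnegative on $[0,1]$, which follows from $\pi(1)=0$, $\pi(0)>0$ for $p\geq 2$, and the sign of the leading coefficient); you correctly identify the method and the relevant parameter constraints but do not carry out this step.

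One point where your sketch is more careful than the paper's written proof: the paper's proof of this lemma argues only that \emph{if} $\gamma_{s_1}$ escapes then it must be through $\{Z_1+Z_2-Z_3=0\}$, leaving the ``must escape'' part to the informal discussion preceding the lemma. Your LaSalle argument using the strictly monotone quantities $\omega_2$ and $Z_1/Z_2$ together with Proposition~\ref{critical points in Uhat} (or direct inspection that $p_1$ and the relevant $p_2$-counterpart lie outside $\hat{U}_{(\delta,p,k)}$) fills this in cleanly.
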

\begin{proof}
Suppose $\gamma_{s_1}$ does not escape through $Z_1+Z_2-Z_3=0$, then it can only escape through either $kZ_1Z_3^{p+1}-Z_2^{p}(Z_3-Z_2)^2=0$ or $(Z_3-Z_2)(X_1-X_3)+(p(Z_3-Z_2)-2Z_2)(X_3-X_2)=0$. We prove that these situations are impossible.

Since
\begin{equation}
\label{The bump}
\begin{split}
&\left. \langle\nabla(kZ_1Z_3^{p+1}-Z_2^p(Z_3-Z_2)^2),V\rangle \right|_{kZ_1Z_3^{p+1}-Z_2^p(Z_3-Z_2)^2=0}\\
&=\left. \langle\nabla(Z_3^{p+2}(k\omega_1-\omega_2^p(1-\omega_2)^2)),V\rangle \right|_{kZ_1Z_3^{p+1}-Z_2^p(Z_3-Z_2)^2=0}\\
&=(p+2)Z_3^{p+2}\left(\mathcal{G}-\frac{1}{d}+2X_3\right)(k\omega_1-\omega_2^p(1-\omega_2)^2)\\
&\quad +Z_3^{p+2}(2k\omega_1(X_1-X_3)-2p\omega_2^p(X_2-X_3)(1-\omega_2)^2+4\omega_2^p(1-\omega_2)\omega_2(X_2-X_3))\\
&=Z_3^{p+2}(2k\omega_1(X_1-X_3)-2p\omega_2^p(X_2-X_3)(1-\omega_2)^2+4\omega_2^p(1-\omega_2)\omega_2(X_2-X_3))\\
&\quad \text{ since $kZ_1Z_3^{p+1}-Z_2^p(Z_3-Z_2)^2=0$}\\
&=Z_3^{p+2}(2\omega_2^p(1-\omega_2)^2(X_1-X_3)-2p\omega_2^p(X_2-X_3)(1-\omega_2)^2+4\omega_2^p(1-\omega_2)\omega_2(X_2-X_3))\\
&\quad \text{ since $kZ_1Z_3^{p+1}-Z_2^p(Z_3-Z_2)^2=0$}\\
&=2Z_2^p(Z_3-Z_2)((Z_3-Z_2)(X_1-X_3)+(p(Z_3-Z_2)-2Z_2)(X_3-X_2))\\
&\geq 0 \quad \text{ definition of $\hat{U}_{(\delta,p,k)}$}
\end{split},
\end{equation}
it is impossible for $\gamma_{s_1}$ to escape $\hat{U}_{(\delta,p,k)}$ through $kZ_1Z_3^{p+1}-Z_2^p(Z_3-Z_2)^2=0$.

For the other defining inequality, we have
\begin{equation}
\label{Bump Derivative}
\begin{split}
&\left. \langle\nabla((Z_3-Z_2)(X_1-X_3)+(p(Z_3-Z_2)-2Z_2)(X_3-X_2)),V\rangle \right|_{(Z_3-Z_2)(X_1-X_3)+(p(Z_3-Z_2)-2Z_2)(X_3-X_2)=0}\\
&=\left. \langle\nabla(Z_3((1-\omega_2)(X_1-X_3)+(p(1-\omega_2)-2\omega_2)(X_3-X_2))),V\rangle \right|_{(Z_3-Z_2)(X_1-X_3)+(p(Z_3-Z_2)-2Z_2)(X_3-X_2)=0}\\
&=Z_3\left(\mathcal{G}-\frac{1}{d}+2X_3\right)((1-\omega_2)(X_1-X_3)+(p(1-\omega_2)-2\omega_2)(X_3-X_2)))\\
&\quad +Z_3((1-\omega_2)(X_1-X_3)+(p(1-\omega_2)-2\omega_2)(X_3-X_2)))(\mathcal{G}-1)\\
&\quad +Z_3((1-\omega_2)(\mathcal{R}_1-\mathcal{R}_3)+(p(1-\omega_2)-2\omega_2)(\mathcal{R}_3-\mathcal{R}_2))\\
&\quad +Z_3(2\omega_2(X_3-X_2)(X_1-X_3)+2(p+2)\omega_2(X_3-X_2)^2)\\
&=Z_3((1-\omega_2)(\mathcal{R}_1-\mathcal{R}_3)+(p(1-\omega_2)-2\omega_2)(\mathcal{R}_3-\mathcal{R}_2))\\
&\quad +Z_3(2\omega_2(X_3-X_2)(X_1-X_3)+2(p+2)\omega_2(X_3-X_2)^2)\\
&\quad \text{ since $(Z_3-Z_2)(X_1-X_3)+(p(Z_3-Z_2)-2Z_2)(X_3-X_2))=0$}\\
&=(Z_3-Z_2)(\mathcal{R}_1-\mathcal{R}_3)+(p(Z_3-Z_2)-2Z_2)(\mathcal{R}_3-\mathcal{R}_2)\\
&\quad +2Z_2(X_3-X_2)(X_1-X_3)+2(p+2)Z_2(X_3-X_2)^2\\
&=(Z_3-Z_2)(\mathcal{R}_1-\mathcal{R}_3+p(\mathcal{R}_3-\mathcal{R}_2))-2Z_2(Z_3-Z_2)(2bZ_3+2bZ_2-aZ_1)\\
&\quad +2Z_2(X_3-X_2)((X_1-X_3)+(p+2)(X_3-X_2))\\
&=(Z_3-Z_2)(\mathcal{R}_1-\mathcal{R}_3+p(\mathcal{R}_3-\mathcal{R}_2)+2Z_2(aZ_1-2bZ_2-2bZ_3))\\
&\quad +(Z_3-Z_2)((X_1-X_2)+(p-1)(X_3-X_2))(X_1-X_2+(p+1)(X_3-X_2))\\
&\quad \text{ since $2Z_2(X_3-X_2)=(Z_3-Z_2)(X_1-X_3)+p(Z_3-Z_2)(X_3-X_2)$}
\end{split}.
\end{equation}
Because $\hat{U}_{(\delta,p,k)}\subset U_\delta$, we can apply Proposition \ref{from X to scalar curvature!} to the last line of \eqref{Bump Derivative} and continue the computation as
\begin{equation}
\begin{split}
&\geq (Z_3-Z_2)\left(\mathcal{R}_1-\mathcal{R}_3+p(\mathcal{R}_3-\mathcal{R}_2)+2Z_2(aZ_1-2bZ_2-2bZ_3)+\frac{1-\mathcal{G}}{d(d-1)}\right) \\
&=(Z_3-Z_2)\left(\mathcal{R}_1-\mathcal{R}_3+p(\mathcal{R}_3-\mathcal{R}_2)+2Z_2(aZ_1-2bZ_2-2bZ_3)+\frac{1}{d-1}(\mathcal{R}_1+\mathcal{R}_2+\mathcal{R}_3)\right) \quad \text{by \eqref{conservation law}}\\
&=\left(2-\frac{1}{d-1}\right)bZ_1^2+\left(aZ_3((p+1)\omega_2-p)+\frac{a}{d}(Z_2+Z_3)\right)Z_1\\
&\quad +Z_3^2\left(-\left(2bp+4b+\frac{b}{d-1}\right)\omega_2^2+\left(
\frac{a}{d-1}+a-4b\right)\omega_2+\left(2pb-2b-\frac{b}{d-1}\right)\right).
\end{split}
\end{equation}
The first term of the computation result above is obviously positive. The second term is positive because $\omega_2\geq \omega_*>\frac{p}{p+1}$ in $\hat{U}_{(\delta,p,k)}$. The positivity of the last term depends on the one of parabola
$$
\pi(\omega_2)=-\left(2bp+4b+\frac{b}{d-1}\right)\omega_2^2+\left(
\frac{a}{d-1}+a-4b\right)\omega_2+\left(2pb-2b-\frac{b}{d-1}\right).
$$
Since we impose $p\geq 2$, it is clear that $\pi(0)$ is positive. As the coefficient of the first term is negative, we know that $\pi$ has two roots with different signs. It is easy to verify that $\pi(1)=0$. Then we conclude that $\pi$ is non-negative for all $\omega_2\in [0,1]$.
Therefore, the computation of \eqref{Bump Derivative} is non-negative and only vanishes when $Z_1=0$ and $Z_2=Z_3$.

Notice that there is no need to check the possibility that $\gamma_{s_1}$ may escape through $Z_2-\omega_*Z_3=0$. Because when the equality of $Z_2-\omega_*Z_3\geq 0$ is reached at some point $\gamma_{s_1}(\eta_*)$, it implies that the function $\mathcal{C}$ in \eqref{function C} vanishes at that point. Specifically, we have
$
1-\omega_2=\frac{\omega_2^p(1-\omega_2)^2}{k}
$
at that point. But then
$$
Z_1Z_3^{p+1}\leq Z_3^{p+1}(Z_3-Z_2)=\frac{Z_2^p(Z_3-Z_2)^2}{k}\leq Z_1Z_3^{p+1},
$$
which implies $kZ_1Z_3^{p+1}-Z_2^p(Z_3-Z_2)^2=0$ at that point and this case is included in the computation at the beginning of the proof.
\end{proof}

\begin{proposition}
\label{critical points in Uhat}
The only critical points in $\hat{U}_{(\delta,p,k)}$ are $p_0$ and those of Type I.
\end{proposition}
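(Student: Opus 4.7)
The plan is to enumerate the five types of critical points listed in Proposition \ref{critical points} and check each against the defining inequalities of $\hat{U}_{(\delta,p,k)}$ from \eqref{yes} and \eqref{hatyes}; the argument is elementary case-checking once the right obstructing inequality is isolated in each case.

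First I would verify directly that $p_0$ lies in $\hat{U}_{(\delta,p,k)}$: the quantities $Z_1$, $Z_1+Z_2-Z_3$, $B_{p,k}$, and $(Z_3-Z_2)(X_1-X_3)+(p(Z_3-Z_2)-2Z_2)(X_3-X_2)$ all vanish at $p_0$, while $Z_2-\omega_* Z_3=(1-\omega_*)/d>0$. For Type I points $(x_1,x_2,x_3,0,0,0)$, every inequality involving $Z_j$'s holds trivially, leaving only $X_3\geq 0$ together with $F_j-F_3\geq 0$ (Case I) or $X_3-X_j+\rho(Z_3-Z_j)\geq 0$ (Cases II, III); both reduce to $X_3\geq X_1,X_2$, which cuts out a subfamily of Type I inside $\hat{U}_{(\delta,p,k)}$, accounting for the ``Type I'' part of the conclusion.

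To exclude the remaining critical points, I would exhibit a violated defining inequality in each case. Type V ($p_1$) has $Z_1+Z_2-Z_3=\frac{1}{n}\sqrt{(n-1)/(a-b)}>0$, violating $Z_1+Z_2-Z_3\leq 0$. Type III points other than $p_0$ either leave $P$ (negative $Z_j$) or fail $Z_3\geq Z_2$ or $Z_2\geq \omega_*Z_3>0$; I would dispose of these by case-splitting on which index is permuted. Type II (Case I only) has exactly one nonzero $Z_j$, so $Z_3\geq Z_1,Z_2$ together with $Z_2\geq \omega_*Z_3$ would force every $Z_j=0$, reducing to Type I rather than Type II.

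The main obstacle is Type IV ($p_2$ and its permutations): the permutation with $Z_3$ largest has $Z_2/Z_3=2b/(d-1)$, and I must show this is strictly below $\omega_*$. From Proposition \ref{verytechnicalthatitsiboring} we have $\omega_*>\frac{p}{p+1}\geq \frac{(d-1)(1+\delta)}{a+2b}>\frac{d-1}{a+2b}$, so it suffices to prove $\frac{2b}{d-1}\leq \frac{d-1}{a+2b}$. Using $d-1=a-2b$, this is equivalent to $(a-2b)^2\geq 2b(a+2b)$, which simplifies to $a\geq 6b$ — exactly the sharp inequality recorded in Remark \ref{scalr of spher basic ineq} (with equality in Case I, absorbed by the strict inequality $\omega_*>\frac{d-1}{a+2b}$ coming from $\delta>0$). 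The permutations with $Z_1$ or $Z_2$ largest instead immediately violate $Z_3\geq Z_1,Z_2$, completing the proof.
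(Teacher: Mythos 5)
Your case-check is correct, and the overall strategy (run through the list in Proposition \ref{critical points} and exhibit a violated defining inequality for each excluded type) is the same as the paper's. The difference is in which inequality does the work for Types IV and V. The paper combines \eqref{thankyouomega*} with \eqref{Z_1Z_2} to get $Z_2\geq \frac{\omega_*}{1-\omega_*}Z_1> pZ_1\geq 2Z_1$ throughout $\hat{U}_{(\delta,p,k)}$, and this single inequality kills both $p_1$ and $p_2$ at once, since both have $Z_1=Z_2>0$; no appeal to $a\geq 6b$ is needed. You instead use $Z_1+Z_2-Z_3\leq 0$ for Type V and the chain $\frac{2b}{d-1}\leq\frac{d-1}{a+2b}<\omega_*$ for Type IV, where the first inequality is exactly $a\geq 6b$ from Remark \ref{scalr of spher basic ineq} and the strictness in Case I is rescued by $\delta>0$. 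That is a valid (and somewhat sharper) analysis of where $p_2$ sits relative to the entrance zone, but it is more work than necessary; the only small thing worth adding to your write-up is the one-line check $\frac{2b}{d-1}<1$ (i.e.\ $a>4b$) justifying that in the permuted Type IV points the largest $Z$-entry really does move off the third slot, so that $Z_3\geq Z_1,Z_2$ fails as you assert. Your remarks that only a subfamily of Type I lies in $\hat{U}_{(\delta,p,k)}$, and that Type II points are forced into Type I by the $Z$-inequalities, are both consistent with the statement, which only claims an upper bound on the set of critical points in $\hat{U}_{(\delta,p,k)}$.
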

\begin{proof}
By proposition \ref{critical points}, it is clear that $p_0$ and critical points of Type I are in $\hat{U}_{(\delta,p,k)}$.  We first eliminate critical points with negative $Z_j$ entry. Since $\hat{U}_{(\delta,p,k)}\subset S_3$, we can eliminate critical points with $Z_3$ smaller than the other two $Z_j$'s. Because $X_3\geq 0$ in $S_3$, there is no critical points of Type II. Since $Z_2\geq pZ_1\geq Z_1$ in $\hat{U}_{(\delta,p,k)}$ by \eqref{thankyouomega*} and \eqref{Z_1Z_2}, there is no critical points other than $p_0$ and those of Type I in $\hat{U}_{(\delta,p,k)}$.
\end{proof}

\begin{proposition}
\label{Z1Z2Z3}
The function $Z_1Z_2Z_3$ stays positive and increases along $\gamma_{s_1}$.
\end{proposition}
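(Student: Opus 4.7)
The plan is to compute the logarithmic derivative of $Z_1Z_2Z_3$ directly from the ODE system \eqref{new Ricci-flat} and simplify it using the constraint $\mathcal{H} \equiv 1$ that holds on the flow-invariant set containing $\gamma_{s_1}$. Since $Z_j' = Z_j(\mathcal{G} - \mathcal{H}/d + 2X_j)$, the product rule gives
\begin{equation*}
(Z_1Z_2Z_3)' = Z_1Z_2Z_3 \sum_{j=1}^{3}\left(\mathcal{G} - \frac{\mathcal{H}}{d} + 2X_j\right) = Z_1Z_2Z_3\left(3\mathcal{G} - \frac{3\mathcal{H}}{d} + \frac{2\mathcal{H}}{d}\right) = Z_1Z_2Z_3\left(3\mathcal{G} - \frac{1}{d}\right),
\end{equation*}
using $\sum_j X_j = \mathcal{H}/d = 1/d$ in the second step.

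Next I would observe that $3\mathcal{G} - 1/d \geq 0$ by Cauchy--Schwarz: since $\mathcal{G} = d\sum_j X_j^2$ and $\sum_j X_j = 1/d$, we have
\begin{equation*}
\mathcal{G} = d\sum_{j=1}^{3} X_j^2 \;\geq\; d\cdot\frac{(X_1+X_2+X_3)^2}{3} \;=\; \frac{1}{3d},
\end{equation*}
with equality exactly when $X_1 = X_2 = X_3 = \tfrac{1}{3d} = \tfrac{1}{n}$. Hence $(Z_1Z_2Z_3)' \geq 0$ along $\gamma_{s_1}$, and the product $Z_1Z_2Z_3$ is non-decreasing.

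For the positivity claim, I would invoke Remark \ref{symmetry 2}, which says that each $Z_j$ keeps a constant sign along the flow since $Z_j' = Z_j(\cdots)$. The linearized solution \eqref{linearized solution<} near $p_0$ shows that $Z_1 \sim (d+1)s_0 e^{2\eta/d} > 0$ while $Z_2, Z_3 \to 1/d > 0$ as $\eta \to -\infty$, so each $Z_j$ is positive for some sufficiently negative $\eta$ and therefore for all $\eta$ in the domain of $\gamma_{s_1}$. Combining with the non-decreasing derivative gives both statements: $Z_1Z_2Z_3 > 0$ throughout, and it is strictly increasing except possibly at isolated instants where $X_1 = X_2 = X_3 = 1/n$. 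Since $\gamma_{s_1}$ starts at $p_0$ where $X_1 = 1/d \neq 1/n$ and is not the constant curve at $p_1$, the zero set of $3\mathcal{G} - 1/d$ cannot contain an open interval, giving strict monotonicity.

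I do not anticipate any serious obstacle here: the argument is a direct computation that exploits the invariance $\mathcal{H} \equiv 1$ and a one-line Cauchy--Schwarz estimate; the only care needed is in citing Remark \ref{symmetry 2} (or the linearization) to pass from ``positive near $p_0$'' to ``positive along all of $\gamma_{s_1}$''.
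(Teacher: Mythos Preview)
Your proof is correct and follows essentially the same approach as the paper: compute $(Z_1Z_2Z_3)'=Z_1Z_2Z_3(3\mathcal{G}-1/d)$ and use $\mathcal{H}\equiv 1$ together with $\mathcal{G}\geq 1/n$ (the paper states this last inequality without justification; your Cauchy--Schwarz argument is exactly the intended reason). The paper is terser on positivity and does not separately argue strict monotonicity, but otherwise the two arguments coincide.
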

\begin{proof}
Since $\mathcal{H}\equiv 1$, it is clear that $\mathcal{G}\geq \frac{1}{n}$. Hence
\begin{equation}
\label{increase}
(Z_1Z_2Z_3)'=Z_1Z_2Z_3\left(3\mathcal{G}-\frac{1}{d}\right)\geq 0.
\end{equation}
Since $Z_1Z_2Z_3$ is initially positive along $\gamma_{s_1}$, the proof is complete.
\end{proof}

We are ready to prove the completeness of Ricci-flat metrics represented by $\gamma_{s_1}$ with $s_1$ close enough to zero.
\begin{lemma}
\label{long existence}
There exists a $k>0$ such that an unstable integral curve $\gamma_{s_1}$ to \eqref{new Ricci-flat} on $C\cap \{\mathcal{H}\equiv 1\}$ emanating from $p_0$ is defined on $\mathbb{R}$ if $s_1\in \left(-\sqrt{\frac{k(d+1)s_0}{16d}},\sqrt{\frac{k(d+1)s_0}{16d}}\right)$.
\end{lemma}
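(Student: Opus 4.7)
The plan is to assemble the pieces already constructed in Sections \ref{Completensese} and \ref{Entrance Zone}. The proof naturally splits by the sign of $s_1$. For $s_1 = 0$ the statement is immediate: by Remark \ref{Mcontainy}, $\gamma_0$ is trapped in the compact invariant set $\hat{S}_3$ from the outset, so it exists for all $\eta \in \mathbb{R}$. The case $s_1 < 0$ will be handled identically, using $\hat{S}_2$ and the entrance-zone construction transposed by the $(X_2, Z_2) \leftrightarrow (X_3, Z_3)$ symmetry of the system (Remark \ref{S2}, Remark \ref{symmetry 2}). Thus the substance of the argument is the case $s_1 > 0$.

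First I fix the parameters: choose $\delta \in \bigl(0, 4b/(d-1)\bigr)$, then $p \geq 2$ large enough to satisfy \eqref{lower bound for p} and \eqref{more lower bound for p}, and finally $k > 0$ small enough to satisfy \eqref{small k}. With this choice Proposition \ref{verytechnicalthatitsiboring} provides a compact set $\hat{U}_{(\delta,p,k)} \subset U_\delta$, and Proposition \ref{initial trap} guarantees that $\gamma_{s_1}$ is initially trapped in $\hat{U}_{(\delta,p,k)}$ for every $s_1 \in \bigl(0, \sqrt{ks_0(d+1)/(16d)}\bigr)$. The two remaining tasks are then (a) to show that $\gamma_{s_1}$ must exit $\hat{U}_{(\delta,p,k)}$ in finite $\eta$-time, and (b) to verify that upon exiting, $\gamma_{s_1}$ enters the compact invariant set $\hat{S}_3$.

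For (a), Lemma \ref{only through l0} has already reduced the possible escape to the face $\{Z_1 + Z_2 - Z_3 = 0\}$; I only need to rule out $\omega$-limits inside $\hat{U}_{(\delta,p,k)}$. By Proposition \ref{critical points in Uhat} the only candidate critical points are $p_0$ and the Type I critical points, all of which satisfy $Z_1 Z_2 Z_3 = 0$. Since $Z_1 Z_2 Z_3$ is strictly increasing along $\gamma_{s_1}$ by Proposition \ref{Z1Z2Z3} and is positive once $\eta$ exceeds the starting epoch, none of these points can be an $\omega$-limit. Hence $\gamma_{s_1}$ must reach $\{Z_1 + Z_2 - Z_3 = 0\}$ at some finite $\eta_*$. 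For (b), at $\eta_*$ the function $Z_1 + Z_2 - Z_3$ transitions from non-positive to positive, so its flow derivative is non-negative there; by the computation recorded in the proof of Lemma \ref{never escape}, that derivative is exactly $2Z_1(X_1 - X_3) + 2Z_2(X_2 - X_3)$. This is precisely the second defining inequality of $\hat{S}_3$, while the $S_3$ inequalities are inherited from $\hat{U}_{(\delta,p,k)} \subset S_3$. So $\gamma_{s_1}(\eta_*) \in \hat{S}_3$, and Lemma \ref{never escape} traps the curve in a compact set for all $\eta \geq \eta_*$, giving global existence.

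The main obstacle has essentially been absorbed in advance by the delicate construction of $\hat{U}_{(\delta,p,k)}$ in Proposition \ref{verytechnicalthatitsiboring} and the one-way exit in Lemma \ref{only through l0}; without these, $\gamma_{s_1}$ could in principle leave through a different face and fail to land in $\hat{S}_3$. The only genuinely new bookkeeping step is the identification of the derivative of $Z_1 + Z_2 - Z_3$ at the exit time with the missing inequality defining $\hat{S}_3$, which makes the handover between the two invariant regions automatic.
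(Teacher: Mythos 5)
Your proposal is correct and follows essentially the same route as the paper: initial trapping in $\hat{U}_{(\delta,p,k)}$ (Proposition \ref{initial trap}), forced exit through $\{Z_1+Z_2-Z_3=0\}$ (Lemma \ref{only through l0}), the sign of the exit-time derivative of $Z_1+Z_2-Z_3$ placing $\gamma_{s_1}(\eta_*)$ in $\hat{S}_3$, and then invariance via Lemma \ref{never escape}. The only difference is that the paper additionally verifies that the entry point $\gamma_{s_1}(\eta_*)$ is not a critical point (using $X_1>X_3>X_2$ in $\hat{U}_{(\delta,p,k)}$ together with Propositions \ref{Z1Z2Z3} and \ref{critical points in Uhat}), a check not strictly needed for bare global existence but relied upon in the subsequent asymptotic analysis.
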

\begin{proof}
If $s_1>0$, the curve $\gamma_{s_1}$ is initially trapped in $\hat{U}_{(\delta,p,k)}$ as long as $s_1\in\left(0,\sqrt{\frac{k(d+1)s_0}{16d}}\right)$. The function $Z_1+Z_2-Z_3$ vanishes at $p_0$ and it is negative along $\gamma_{s_1}$ in $\hat{U}_{(\delta,p,k)}$. By Lemma \ref{only through l0}, the function $Z_1+Z_2-Z_3$ must vanish at $\gamma_{s_1}(\eta_*)$ for some $\eta_*\in\mathbb{R}$. Then we must have  $(Z_1+Z_2-Z_3)'(\gamma_{s_1}(\eta_*))\geq 0$. But
\begin{equation}
\begin{split}
(Z_1+Z_2-Z_3)'(\gamma_{s_1}(\eta_*))&=\langle\nabla(Z_1+Z_2-Z_3),V\rangle(\gamma_{s_1}(\eta_*))\\
&=\left(\left.\langle\nabla(Z_1+Z_2-Z_3),V\rangle\right|_{Z_1+Z_2-Z_3=0}\right)(\gamma_{s_1}(\eta_*))\\
&=\left(Z_1(X_1-X_3)+Z_2(X_2-X_3)\right)(\gamma_{s_1}(\eta_*)).
\end{split}
\end{equation}
Hence $\gamma_{s_1}(\eta_*)$ is in $\partial \hat{S}_3.$

By Proposition \ref{verytechnicalthatitsiboring}, we know that $\hat{U}_{(\delta,p,k)}$ is in $U_0$, where $\mathcal{R}_1-\mathcal{R}_3\geq 0$ and $\mathcal{R}_3-\mathcal{R}_2\geq 0$ hold. Then with the similar argument in Proposition \ref{compare X_3 with X_2}, we know that $X_1>X_3>X_2$ along $\gamma_{s_1}$ in $\hat{U}_{(\delta,p,k)}$. Hence the intersection point $\gamma_{s_1}(\eta_*)$ is not $p_0$.
By Proposition \ref{Z1Z2Z3}, we know that $\gamma_{s_1}(\eta_*)$ cannot be a critical point of Type I. By Proposition \ref{critical points in Uhat}, we know that $\gamma_{s_1}(\eta_*)$ is not a critical point. Then by Lemma \ref{never escape}, $\gamma_{s_1}$ continue to flows inward $\hat{S}_3$ from $\gamma_{s_1}(\eta_*)$ and never escape. Therefore, such a $\gamma_{s_1}$ is defined on $\mathbb{R}$. 

By symmetry, similar result can be obtained for $s_1\in\left(-\sqrt{\frac{k(d+1)s_0}{16d}},0\right)$. If $s_1=0$, then we are back to the special case by Remark \ref{Mcontainy}. 
\end{proof}

By the discussion at the end of Section \ref{Local Existence}, Lemma \ref{long existence} proves the first half of Theorem \ref{main 1}.

\begin{remark}
\label{posiscaler}
For $\gamma_{s_1}$ with $s_1\in \left(0, \sqrt{\frac{k(d+1)s_0}{16d}}\right)$, it can be shown that $\mathcal{R}_2$ is negative initially by substituting \eqref{linearized solution<}. Hence the Ricci-flat metrics represented does not have the property introduced in Remark \ref{positivericci}. By straightforward computation, however, it processes a weaker condition that the scalar curvature of each hypersurface remain positive.
\end{remark}

\section{Asymptotic Limit}
\label{aymptoticlimit}
In this section, we study the asymptotic behavior of complete Ricci-flat metrics constructed above. Each integral curve $\gamma_{s_1}$ mentioned below satisfies the condition in Lemma \ref{long existence}, i.e., each $\gamma_{s_1}$ is trapped in $\hat{U}_{(\delta,p,k)}$ initially and then enter $\hat{S}_3$ in finite time.

\begin{lemma}
\label{omega_1+omega_2isnever1}
Let $\gamma_{s_1}$ be a long time existing integral curve that intersects with $\hat{S}_3$ at a non-critical point $\gamma_{s_1}(\eta_*)$. Then function $\omega_1+\omega_2>1$ along $\gamma_{s_1}(\eta)$ for $\eta\in(\eta_*,\infty)$.
\end{lemma}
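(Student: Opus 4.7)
The plan is to promote the weak inequality $f_1:=Z_1+Z_2-Z_3\geq 0$ (valid for all $\eta\geq\eta_*$ because $\gamma_{s_1}\subset\hat S_3$ by Lemma \ref{never escape}) to the strict one by treating $f_1$ as the solution of a first-order linear ODE driven by the auxiliary function $\phi:=Z_1(X_1-X_3)+Z_2(X_2-X_3)$. Differentiating $f_1$ along \eqref{new Ricci-flat} and using $\mathcal{H}\equiv 1$ gives
\begin{equation*}
f_1'=\bigl(\mathcal{G}-\tfrac{1}{d}+2X_3\bigr)f_1+2\phi,
\end{equation*}
so variation of parameters produces $f_1(\eta)=e^{\mu(\eta)}f_1(\eta_*)+2\int_{\eta_*}^{\eta}e^{\mu(\eta)-\mu(\tau)}\phi(\tau)\,d\tau$, where $\mu$ is the antiderivative of the drift coefficient. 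Both $f_1$ and $\phi$ appear in the defining inequalities of $\hat S_3$, so invariance forces $f_1,\phi\geq 0$ on $[\eta_*,\infty)$, while the integrating factor is strictly positive. Hence $f_1(\eta_0)=0$ for some $\eta_0>\eta_*$ would force $f_1(\eta_*)=0$ and $\phi\equiv 0$ on $[\eta_*,\eta_0]$ (by continuity and non-negativity of the integrand); feeding $\phi\equiv 0$ back into the ODE then gives $f_1\equiv 0$ on the same interval.

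The main task is to rule out the simultaneous identical vanishing of $f_1$ and $\phi$ on a nontrivial subinterval. Specializing \eqref{usefulfor p3} to the locus $\{f_1=0\}\cap\{\phi=0\}$ turns it into an equality: the term $\tfrac{1}{2}(Z_1-Z_2)^2(Z_1+Z_2-Z_3)(a+2b)$ vanishes, $(a-2b)(Z_1+Z_2)-4bZ_3$ collapses to $(a-6b)Z_3$, and after keeping the $X$-squared contribution that was dropped in (3.8) one obtains
\begin{equation*}
\phi'=2Z_1(X_1-X_3)^2+2Z_2(X_2-X_3)^2+\tfrac{1}{2}\bigl(Z_1(Z_3-Z_1)+Z_2(Z_3-Z_2)\bigr)(a-6b)Z_3.
\end{equation*}
Every summand is non-negative on $\hat S_3$ (using Remark \ref{scalr of spher basic ineq} for the last one), and Proposition \ref{Z1Z2Z3} guarantees $Z_1,Z_2,Z_3>0$ along $\gamma_{s_1}$. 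So $\phi\equiv 0$, hence $\phi'\equiv 0$, forces $X_1=X_2=X_3$, and $\mathcal{H}\equiv 1$ pins the common value to $1/n$.

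Once all $X_j$'s equal $1/n$, the identity $n=3d$ shared by the three cases yields $\mathcal{G}=3d/n^2=1/n$ and $\mathcal{G}-\tfrac{1}{d}+2X_j=0$, so every $Z_j'$ vanishes as well. Thus $\gamma_{s_1}$ would be constant on $[\eta_*,\eta_0]$, making $\gamma_{s_1}(\eta_*)$ a zero of $V$ and therefore a critical point, contrary to hypothesis. We conclude $f_1(\eta)>0$, i.e.\ $\omega_1(\eta)+\omega_2(\eta)>1$, for every $\eta>\eta_*$. The only genuine subtlety is the upgrade of \eqref{usefulfor p3} from an inequality (where the $X$-squared contribution was discarded and $Z_1+Z_2\geq Z_3$ was invoked) to an exact identity on $\{f_1=\phi=0\}$; once that is in hand, the argument proceeds uniformly for all three cases through Lemma \ref{never escape}, Proposition \ref{Z1Z2Z3}, and the arithmetic identity $n=3d$.
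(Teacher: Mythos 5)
Your proof is correct, but it takes a genuinely different route from the paper's. The paper works with $\omega_1+\omega_2$ directly: its derivative $2\omega_1(X_1-X_3)+2\omega_2(X_2-X_3)=\tfrac{2}{Z_3}\phi$ is non-negative throughout $\hat{S}_3$, so the function is non-decreasing from the value $1$ at $\eta_*$; strictness is then forced at the entry point by a second-derivative computation (again via \eqref{messy!} and \eqref{usefulfor p3}), which is strictly positive because $X_1>X_3>X_2$ holds at $\gamma_{s_1}(\eta_*)$ --- a fact imported from the proof of Lemma \ref{long existence} --- and a mean value theorem argument propagates strict positivity forward. You instead treat $f_1=Z_1+Z_2-Z_3$ as the solution of the linear ODE $f_1'=(\mathcal{G}-\tfrac{1}{d}+2X_3)f_1+2\phi$ (which is correct), reduce a later zero of $f_1$ to the identical vanishing of $f_1$ and $\phi$ on a subinterval, and exclude that by showing $\phi'\equiv 0$ on $\{f_1=\phi=0\}$ forces $X_1=X_2=X_3=\tfrac{1}{n}$ (using $Z_1,Z_2>0$ from Proposition \ref{Z1Z2Z3} and $a-6b\geq 0$ from Remark \ref{scalr of spher basic ineq}), whence $Z_j'\equiv 0$ and the curve is constant, contradicting non-criticality of $\gamma_{s_1}(\eta_*)$. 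What your approach buys is that it uses only the hypotheses literally stated in the lemma (long-time existence and entry into $\hat{S}_3$ at a non-critical point), rather than the finer entry-point inequality $X_1>X_3>X_2$; what the paper's approach buys is brevity once that inequality is granted. Both arguments ultimately rest on the same algebraic identity behind \eqref{usefulfor p3}, which you correctly upgrade to an equality on $\{f_1=\phi=0\}$. One cosmetic remark: for Cases II and III the vanishing of the third summand of your $\phi'$ already gives $Z_1=Z_2=Z_3$, incompatible with $Z_1+Z_2=Z_3$ and $Z_3>0$, so the passage through the $X_j$'s is really only needed for Case I, where $a=6b$.
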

\begin{proof}
Note that $(\omega_1+\omega_2)(\gamma_{s_1}(\eta_*))=1$. By Lemma \ref{never escape}, we know that $\gamma_{s_1}(\eta)\in \hat{S}_3$ for $\eta\geq \eta_*$. We have 
\begin{equation}
\begin{split}
(\omega_1+\omega_2)'(\gamma_{s_1}(\eta_*))&=(2\omega_1(X_1-X_3)+2\omega_2(X_2-X_3))(\gamma_{s_1}(\eta_*))\\
&\geq 0\quad\text{by definition of $\hat{S}_3$}.
\end{split}
\end{equation}
Suppose $(\omega_1+\omega_2)'(\gamma_{s_1}(\eta_*))=0$. Recall in the proofs of Lemma \ref{long existence}, we know that $X_1>X_3>X_2$ at $\gamma_{s_1}(\eta_*)$. By \eqref{messy!} and \eqref{usefulfor p3}, we have
\begin{equation}
\begin{split}
(\omega_1+\omega_2)''(\gamma_{s_1}(\eta_*))&\geq \left(4\omega_1(X_1-X_3)^2+4\omega_2(X_2-X_3)^2\right)(\gamma_{s_1}(\eta_*))>0
\end{split}.
\end{equation}
Suppose there exists $\eta_{1}\in(\eta_*,\infty)$ that $(\omega_1+\omega_1)(\gamma_{s_1}(\eta_{1}))=1$. We know from the computation above that there exists $\eta_{2}\in(\eta_{*},\eta_1)$ such that $(\omega_1+\omega_2)(\gamma_{s_1}(\eta_{2}))>1$. By mean value theorem, there exists $\eta_3\in[\eta_2,\eta_1]$ such that
$(\omega_1+\omega_2)'(\gamma_{s_1}(\eta_3))=(2\omega_1(X_1-X_3)+2\omega_2(X_2-X_3))(\gamma_{s_1}(\eta_3))<0,$ a contradiction to the definition of $\hat{S}_3$.
\end{proof}

\begin{lemma}
\label{upper X_3}
The variable $X_3$ is smaller than $\frac{1}{n}$ along integral curves $\gamma_{s_1}$.
\end{lemma}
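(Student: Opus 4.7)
The plan is a barrier-type maximum principle on the ODE. Since $X_3(p_0) = 0 < \frac{1}{n}$, it suffices to show that $X_3$ cannot cross the level $\frac{1}{n}$ from below along $\gamma_{s_1}$. Using $\mathcal{H} \equiv 1$ (so $dX_3 = \frac{1}{3}$ at the hypothetical crossing) together with the conservation law \eqref{conservation law} in the form $\mathcal{G} - 1 = -d(\mathcal{R}_1 + \mathcal{R}_2 + \mathcal{R}_3)$, a direct computation yields
\begin{equation*}
X_3'\Big|_{X_3 = 1/n} = X_3(\mathcal{G}-1) + \mathcal{R}_3 = \frac{1}{3}(2\mathcal{R}_3 - \mathcal{R}_1 - \mathcal{R}_2).
\end{equation*}
Hence the lemma reduces to the algebraic inequality $\mathcal{R}_1 + \mathcal{R}_2 - 2\mathcal{R}_3 \geq 0$ along the integral curve; expanding the $\mathcal{R}_j$'s gives the convenient identity
\begin{equation*}
\mathcal{R}_1 + \mathcal{R}_2 - 2\mathcal{R}_3 = -4bZ_3^2 + a(Z_1+Z_2)Z_3 + 2b(Z_1^2+Z_2^2) - 2aZ_1Z_2.
\end{equation*}

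By Lemma \ref{long existence}, $\gamma_{s_1}$ first traverses $\hat{U}_{(\delta,p,k)}$ and then enters $\hat{S}_3$, so I intend to check the inequality separately on each region. On $\hat{S}_3$, the defining conditions give $\max(Z_1,Z_2) \leq Z_3 \leq Z_1 + Z_2$, and combined with the basic estimate $a - 6b \geq 0$ from Remark \ref{scalr of spher basic ineq} I will chain elementary substitutions to obtain the clean bound
\begin{equation*}
\mathcal{R}_1 + \mathcal{R}_2 - 2\mathcal{R}_3 \geq \frac{a+2b}{2}(Z_1 - Z_2)^2 \geq 0.
\end{equation*}
Equality throughout this chain forces either $Z_1 + Z_2 = 0$ (ruled out along $\gamma_{s_1}$ by Proposition \ref{Z1Z2Z3}) or, only in Case~I, the boundary configuration $Z_1 = Z_2$, $Z_3 = 2Z_1$; imposing further $X_3 = \frac{1}{n}$ and $\mathcal{H} \equiv 1$ then pins the hypothetical first crossing onto $p_1$, an $\omega$-limit not reached in finite $\eta$.

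On $\hat{U}_{(\delta,p,k)}$ I instead use $Z_1 + Z_2 \leq Z_3$ and $\omega_2 := Z_2/Z_3 \geq \omega_* > \frac{p}{p+1} \geq \frac{2}{3}$ (Proposition \ref{verytechnicalthatitsiboring}, with $p \geq 2$). Dividing the identity above by $Z_3^2$, positivity of $\mathcal{R}_1 + \mathcal{R}_2 - 2\mathcal{R}_3$ becomes positivity of
\begin{equation*}
h(\omega_1,\omega_2) = -4b + a(\omega_1+\omega_2) + 2b(\omega_1^2+\omega_2^2) - 2a\omega_1\omega_2
\end{equation*}
on the compact region $\Omega := \{\omega_1 \geq 0,\ \omega_1 + \omega_2 \leq 1,\ \omega_2 \geq \omega_*\}$. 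A short Lagrange computation shows the unique interior critical point of $h$ lies outside $\Omega$ for each of Cases~I, II, and III, so the minimum of $h$ is attained on $\partial\Omega$. The three boundary pieces are treated in turn: on $\omega_1 + \omega_2 = 1$ the expression collapses to $(d-1) - 2(a+2b)\omega_2(1-\omega_2)$, non-negative via $a - 6b \geq 0$ and $\omega_2 > \frac{1}{2}$; on $\omega_1 = 0$ and on $\omega_2 = \omega_*$ direct quadratic positivity suffices, because the relevant positive roots fall below $\frac{2}{3}$ in each of the three cases.

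The main obstacle I anticipate is the two-regime bookkeeping together with the Case~I equality analysis: ensuring that saturation of $\mathcal{R}_1 + \mathcal{R}_2 - 2\mathcal{R}_3 = 0$ at a point with $X_3 = \frac{1}{n}$ forces $\gamma_{s_1}$ to coincide with $p_1$, which the finiteness of the hypothetical first crossing time precludes.
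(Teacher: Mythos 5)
Your overall strategy coincides with the paper's: reduce the barrier at $X_3=\tfrac1n$ to the sign of $\mathcal{R}_1+\mathcal{R}_2-2\mathcal{R}_3$ (the paper works with $(X_1+X_2-2X_3)'$ at its first zero, which is the same quantity), and then verify non-negativity separately in $\hat{U}_{(\delta,p,k)}$ using $Z_1+Z_2\le Z_3\le Z_2/\omega_*$ and in $\hat{S}_3$ using $\max(Z_1,Z_2)\le Z_3\le Z_1+Z_2$. Your elementary manipulations differ only cosmetically (you normalize by $Z_3^2$ and minimize over a planar region; the paper evaluates the concave quadratic $\mathcal{J}(Z_3)$ at the endpoints of the admissible $Z_3$-interval), and both routes go through.

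The genuine gap is in your equality analysis on $\hat{S}_3$. Since you only obtain $X_3'\le 0$ at a hypothetical first crossing, every configuration where $\mathcal{R}_1+\mathcal{R}_2-2\mathcal{R}_3=0$ must be excluded, and your list of such configurations is both incomplete and mislabelled. Writing $Z_1=Z_2=z>0$ (forced by your lower bound $\tfrac{a+2b}{2}(Z_1-Z_2)^2$), one computes $\mathcal{J}(z)=0$ in \emph{all three cases} and $\mathcal{J}(2z)=2(a-6b)z^2$, which vanishes only in Case I. So the degeneracies are (i) $Z_1=Z_2=Z_3$, occurring in every case, and (ii) $Z_1=Z_2=\tfrac{Z_3}{2}$, occurring only in Case I; you omit (i) entirely and attach the ``pinned onto $p_1$'' conclusion to (ii), which is wrong since $p_1$ has all $Z_j$ equal. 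The correct assignment is the reverse: configuration (i) is the one forced to be $p_1$ — and for that you need more than $\mathcal{H}\equiv1$ and $X_3=\tfrac1n$, namely the defining inequalities of $S_3$ (which give $X_3\ge X_j$ when $Z_3=Z_j$, hence $X_1=X_2=X_3=\tfrac1n$), after which one contradicts reaching a critical point at finite $\eta$; configuration (ii) must instead be excluded by Lemma \ref{omega_1+omega_2isnever1}, which guarantees $\omega_1+\omega_2>1$ strictly once $\gamma_{s_1}$ has entered $\hat{S}_3$, and which does not appear anywhere in your argument. Without these two repairs the barrier can, as far as your proof shows, be touched with $X_3'=0$ and then crossed. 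A smaller loose end: on the edge $\omega_2=\omega_*$ of your region $\Omega$ the quadratic in $\omega_1$ has negative linear coefficient, so ``direct quadratic positivity'' needs the additional observation that its vertex lies to the right of $\omega_1=1-\omega_*$ (true in all three cases), reducing that edge to the $\omega_1+\omega_2=1$ edge.
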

\begin{proof}
Since $\mathcal{H}\equiv 1$, $X_3\leq \frac{1}{n}$ is equivalent to $X_1+X_2-2X_3\geq 0$. The function $X_1+X_2-2X_3$ is positive at $p_0$. Suppose the function vanishes along $\gamma_{s_1}$ at some point in $\hat{U}_{(\delta,p,k)}$, then we have
\begin{equation}
\label{X_3deri}
\begin{split}
\left.(X_1+X_2-2X_3)'\right|_{X_1+X_2-2X_3=0}&=(X_1+X_2-2X_3)(\mathcal{G}-1)+\mathcal{R}_1+\mathcal{R}_2-2\mathcal{R}_3\\
&=\mathcal{R}_1+\mathcal{R}_2-2\mathcal{R}_3 \quad \text{since $X_1+X_2-2X_3=0$}\\
&=a(Z_2Z_3+Z_1Z_3-2Z_1Z_2)-2b(2Z_3^2-Z_1^2-Z_2^2).
\end{split}
\end{equation}
Consider the computation result above as a function $$\mathcal{J}(Z_3)=-4bZ_3^2+a(Z_1+Z_2)Z_3+2bZ_1^2+2bZ_2^2-2aZ_1Z_2.$$

Since $Z_1+Z_2\leq  Z_3\leq \frac{Z_2}{\omega_*}$ in $\hat{U}_{(\delta,p,k)}$, the positivity of $\mathcal{J}$ is implied by those of $\mathcal{J}(Z_1+Z_2)$ and $\mathcal{J}\left(\frac{Z_2}{\omega_*}\right)$. With the choice $p\geq 2$, inequality \eqref{thankyouomega*} implies $\omega_*>\frac{p}{p+1}\geq \frac{2}{3}\geq \frac{4b}{a}$. Hence it is sufficient to prove a stronger condition: the positivity of $\mathcal{J}(Z_1+Z_2)$ and $\mathcal{J}\left(\frac{a}{4b}Z_2\right)$.
We have
\begin{equation}
\label{Joneend1}
\begin{split}
\mathcal{J}(Z_1+Z_2)&=(a-2b)(Z_1^2+Z_2^2)-8bZ_1Z_2\\
&\geq 4b(Z_1-Z_2)^2\quad\text{Remark \ref{scalr of spher basic ineq}}\\
&\geq 0
\end{split}.
\end{equation}
And we have
\begin{equation}
\label{Joneend2}
\begin{split}
\mathcal{J}\left(\frac{a}{4b}Z_2\right)&=\left(\frac{a^2}{4b}-2a\right)Z_1Z_2+2b(Z_1^2+Z_2^2)\\
&\geq \left(\frac{a^2}{4b}-2a\right)Z_1Z_2+4bZ_1Z_2\\
&\geq 0
\end{split}.
\end{equation}
All $Z_j$'s are positive along $\gamma_{s_1}$. Hence by \eqref{Joneend1} and \eqref{Joneend2}, computation \eqref{X_3deri} can vanish only if $Z_1=Z_2=\frac{Z_3}{2}$. But with $p\geq 2$ imposed, $Z_2\geq \omega_*Z_3\geq \frac{2}{3}Z_3\geq\frac{Z_3}{2}$ in $\hat{U}_{(\delta,p,k)}$. Hence $\mathcal{J}$ can only vanish at the origin of $Z$-space, which is impossible for $\gamma_{s_1}$ to reach by \eqref{increase}. Therefore, $X_1+X_2-2X_3$ never vanishes along $\gamma_{s_1}$ at least till $\gamma_{s_1}$ intersect with $\partial \hat{S}_3$ at some $\gamma_{s_1}(\eta_*)$.

$\gamma_{s_1}$ is in $\hat{S}_3$ for $\eta\in[\eta_*,\infty)$. The function $X_1+X_2-2X_3$ is positive at $\gamma_{s_1}(\eta_*)$. Suppose the function vanishes at some point along $\gamma_{s_1}$ in $\hat{S}_3$, then
\begin{equation}
\label{X3der}
\begin{split}
\left.(X_1+X_2-2X_3)'\right|_{X_1+X_2-2X_3=0}&=\mathcal{R}_1+\mathcal{R}_2-2\mathcal{R}_3\\
&=a(Z_2Z_3+Z_1Z_3-2Z_1Z_2)-2b(2Z_3^2-Z_1^2-Z_2^2)\\
&\geq a(Z_2Z_3+Z_1Z_3-2Z_1Z_2)-\dfrac{a}{3}(2Z_3^2-Z_1^2-Z_2^2)  \quad \text{Remark \ref{scalr of spher basic ineq}}\\
&\geq  \dfrac{a}{3}(Z_1+Z_2-Z_3)(2Z_3-Z_1-Z_2)\\
&\geq 0 \quad\text{definition of $\hat{S}_3$}
\end{split}.
\end{equation}
By Proposition \ref{Z1Z2Z3}, there is no need to consider the case where each $Z_j$ vanishes. For Case I-III, suppose computation above vanishes at some point on $\gamma_{s_1}$. Then one possibility is that $Z_1=Z_2=Z_3$ at that point. But then $X_3-X_j+\rho(Z_3-Z_j)=X_3-X_j=\frac{1}{n}-X_j\geq 0$ at that point by the definition of $\hat{S}_3$. Then we must have $X_j=\frac{1}{n}$ for each $j$. Hence the point must be the critical point $p_1$, a contradiction. For Case I in particular, there is an extra possibility where $Z_1=Z_2=\frac{Z_3}{2}$ at that point. It is ruled out by Lemma \ref{omega_1+omega_2isnever1}. Hence $X_3<\frac{1}{n}$ along $\gamma_{s_1}$ all the way.
\end{proof}

We can now describe the asymptotic limit of $\gamma_{s_1}$.
\begin{lemma}
\label{ricci flat limit}
The integral curve $\gamma_{s_1}$ converges to $p_1$.
\end{lemma}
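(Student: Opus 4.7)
The plan is to run a LaSalle-type argument on the compact invariant set $\hat{S}_3$. By Lemma \ref{long existence} and Lemma \ref{never escape}, the integral curve $\gamma_{s_1}$ eventually enters $\hat{S}_3$ and remains there for all later $\eta$. Hence the $\omega$-limit set $\omega(\gamma_{s_1})$ is a nonempty, compact, connected, flow-invariant subset of $\hat{S}_3$, and it suffices to identify the critical points that can lie in it.

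The key numerical observation is that $n = 3d$ in all three cases, so $\tfrac{1}{d} = \tfrac{3}{n}$. Combined with $\mathcal{H} \equiv 1$ and the Cauchy--Schwarz bound $\mathcal{G} = d\sum_j X_j^2 \geq d\bigl(\sum_j X_j\bigr)^2/3 = \tfrac{1}{n}$, Proposition \ref{Z1Z2Z3} makes $\Psi := Z_1Z_2Z_3$ strictly increasing until $X_1 = X_2 = X_3 = \tfrac{1}{n}$. Since $\Psi$ is bounded on $\hat{S}_3$, it converges, and LaSalle's invariance principle gives $\omega(\gamma_{s_1}) \subseteq \{X_1 = X_2 = X_3 = \tfrac{1}{n}\}$. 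On that locus each $Z_j' = Z_j(\mathcal{G} - \tfrac{1}{d} + 2X_j)$ vanishes, so $\omega(\gamma_{s_1})$ consists of equilibria. By Proposition \ref{critical points}, the equilibria with all $X_j = \tfrac{1}{n}$ are precisely the Type IV family (permutations of $p_2$) and the Type V family (sign variants of $p_1$); since $\Psi>0$, the only candidates are $p_1$ and the three permutations of $p_2$.

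Next I would rule out the $p_2$ candidates. For two of the three permutations, one of $Z_1,Z_2$ strictly exceeds $Z_3$, since the ratio of the distinguished to the non-distinguished entry of $p_2$ equals $\tfrac{d-1}{2b}>1$ by Remark \ref{scalr of spher basic ineq} (as $a>4b$); these permutations violate $Z_3\geq Z_i$ and lie outside $\hat{S}_3$. For the remaining permutation, where $Z_3$ is largest, a short computation gives $\tfrac{Z_1+Z_2-Z_3}{Z_3}\big|_{p_2}=\tfrac{6b-a}{d-1}$, which is strictly negative in Cases II and III by Remark \ref{scalr of spher basic ineq}, so this permutation of $p_2$ is also outside $\hat{S}_3$. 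In Cases II and III this already forces $\omega(\gamma_{s_1}) = \{p_1\}$ by connectedness.

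The hard part will be Case I, where $a = 6b$ and the favored permutation of $p_2$ sits exactly on the boundary $\{Z_1+Z_2=Z_3\}\cap\partial\hat{S}_3$. Lemma \ref{omega_1+omega_2isnever1} guarantees $\omega_1+\omega_2>1$ strictly along $\gamma_{s_1}$ for $\eta>\eta_*$, but this does not by itself preclude a boundary limit. I would close the gap by computing $\mathcal{L}(p_2)$ from \eqref{linearization} restricted to the tangent space of $C\cap\{\mathcal{H}\equiv 1\}$ and checking that the local stable manifold of $p_2$ is contained in the closed half-space $\{Z_1+Z_2\leq Z_3\}$, so that the strict inequality from Lemma \ref{omega_1+omega_2isnever1} prevents convergence of $\gamma_{s_1}$ to $p_2$. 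Together with the other cases this leaves $p_1$ as the unique point in $\omega(\gamma_{s_1})$ and proves the lemma.
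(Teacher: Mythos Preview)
Your LaSalle framework is sound and genuinely cleaner than the paper's route. The paper instead extracts a subsequence with $\mathcal{G}\to\tfrac{1}{n}$, invokes the computation \eqref{X3der} from Lemma~\ref{upper X_3} to force $\mathcal{R}_1+\mathcal{R}_2-2\mathcal{R}_3\to 0$ along it, factorizes this expression in $\hat S_3$ as $\tfrac{a}{3}(Z_1+Z_2-Z_3)(2Z_3-Z_1-Z_2)$ (an equality in Case~I, an inequality otherwise), uses Lemma~\ref{omega_1+omega_2isnever1} to kill the first factor, and then computes the linearization at $p_1$ to upgrade ``$p_1$ in the $\omega$-limit set'' to actual convergence. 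Your argument reaches ``$\omega$-limit set $=\{p_1\}$'' directly, so it needs neither Lemma~\ref{upper X_3} nor the sink computation; this is a real simplification.

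There is, however, a genuine gap in your treatment of Case~I. The stable-manifold idea is not executed, and as stated it may not work: the stable manifold of $p_2$ is $3$-dimensional inside the $4$-dimensional phase space $C\cap\{\mathcal{H}=1\}$, the hypersurface $\{Z_1+Z_2=Z_3\}$ is not flow-invariant, and there is no a~priori reason the stable manifold should sit on one side of it. Fortunately a much shorter argument closes the gap. The defining inequality $Z_1(X_1-X_3)+Z_2(X_2-X_3)\geq 0$ of $\hat S_3$ is exactly $\tfrac{Z_3}{2}(\omega_1+\omega_2)'\geq 0$, so $\omega_1+\omega_2$ is non-decreasing once the curve is in $\hat S_3$. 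Lemma~\ref{omega_1+omega_2isnever1} gives a single time $\eta_0>\eta_*$ with $(\omega_1+\omega_2)(\eta_0)>1$, and monotonicity then yields $(\omega_1+\omega_2)(\eta)\geq(\omega_1+\omega_2)(\eta_0)>1$ for all $\eta\geq\eta_0$. Since $(\omega_1+\omega_2)(p_2)=\tfrac{4b}{d-1}=1$ in Case~I, this already excludes $p_2$ from the $\omega$-limit set, and your LaSalle argument then finishes the proof without any stable-manifold computation.
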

\begin{proof}
Since $\gamma_{s_1}$ does not hit any critical point in $\hat{U}_{(\delta,p,k)}$ by Lemma \ref{long existence},  we can focus on the behavior of the integral curve in the set $\hat{S}_3$.
By Proposition \ref{Z1Z2Z3}, we know that $Z_1Z_2Z_3$ converges to some positive number along $\gamma_{s_1}$. There exists a sequence $\{\eta_m\}$ such that $\lim\limits_{m\to \infty}\eta_m=\infty$ and $\lim\limits_{m\to \infty}\mathcal{G}=\frac{1}{n}$. Hence $\lim\limits_{m\to \infty}X_j(\eta_m)=\frac{1}{n}$ for each $j$. 
But then
\begin{equation}
\begin{split}
0&=\lim_{m\to\infty} (X_1+X_2-2X_3)'(\eta_m)\\
&=\lim_{m\to\infty}\left((X_1+X_2-2X_3)(\mathcal{G}-1)+\mathcal{R}_1+\mathcal{R}_2-2\mathcal{R}_3\right)(\eta_m)\\
&=\lim_{m\to\infty}\left(\mathcal{R}_1+\mathcal{R}_2-2\mathcal{R}_3\right)(\eta_m)\\
&\geq 0  \quad\text{by \eqref{X3der}}
\end{split}.
\end{equation}
Therefore, either $\lim\limits_{m\to \infty} Z_3(\omega_1+\omega_2-1)(\eta_m)=0$ or $\lim\limits_{m\to\infty}(2Z_3-Z_1-Z_2)(\eta_m)=0$. It is clear that $\lim\limits_{m\to\infty}Z_3(\eta_m)\neq 0$ as $Z_3\geq Z_1,Z_2$ in $S_3$ and $Z_1Z_2Z_3$ increases along $\gamma_{s_1}$. By Lemma \ref{omega_1+omega_2isnever1}, we know that $\lim\limits_{m\to\infty}(\omega_1+\omega_2-1)(\eta_m)\neq 0$. Hence $\lim\limits_{m\to\infty}(2Z_3-Z_1-Z_2)(\eta_m)=0$. Since $Z_3\geq Z_1,Z_2$ in $S_3$, we conclude that $\lim\limits_{m\to \infty}(Z_3-Z_1)(\eta_m)=\lim\limits_{m\to \infty}(Z_3-Z_2)(\eta_m)=0$. With \eqref{conservation law}, we conclude that $\lim\limits_{m\to\infty} \gamma_{s_1}(\eta_m)=p_1$. Hence $p_1$ is in the $\omega$-limit set of $\gamma_{s_1}$.

Consider $p_1=\left(\frac{1}{n},\frac{1}{n},\frac{1}{n},\alpha,\alpha,\alpha\right)$, where $\alpha=\frac{1}{n}\sqrt{\frac{n-1}{a-b}}$. By \eqref{linearization}, the linearization at $p_1$ is 
\begin{equation}
\label{linearizationat p1}
\mathcal{L}(p_1)=\begin{bmatrix}
\frac{5}{3n}-1&\frac{2}{3n}&\frac{2}{3n}&2b\alpha&(a-2b)\alpha&(a-2b)\alpha\\
\frac{2}{3n}&\frac{5}{3n}-1&\frac{2}{3n}&(a-2b)\alpha&2b\alpha &(a-2b)\alpha\\
\frac{2}{3n}&\frac{2}{3n}&\frac{5}{3n}-1&(a-2b)\alpha &(a-2b)\alpha &2b\alpha\\
\frac{5}{3}\alpha&-\frac{1}{3}\alpha&-\frac{1}{3}\alpha&0&0&0\\
-\frac{1}{3}\alpha&\frac{5}{3}\alpha&-\frac{1}{3}\alpha&0&0&0\\
-\frac{1}{3}\alpha&-\frac{1}{3}\alpha&\frac{5}{3}\alpha&0&0&0\\
\end{bmatrix}.
\end{equation}
Its eigenvalues and corresponding eigenvectors are
$$
\lambda_1=\frac{1}{n}-1,\quad \lambda_2=\lambda_3=\beta_1,\quad\lambda_4=\lambda_5=\beta_2,\quad \lambda_6=\frac{2}{n}.
$$
$$
v_1=\begin{bmatrix}
n-1\\
n-1\\
n-1\\
-n\alpha\\
-n\alpha\\
-n\alpha
\end{bmatrix},
v_2=\begin{bmatrix}
-\frac{\beta_1}{2\alpha}\\
\frac{\beta_1}{2\alpha}\\
0\\
-1\\
1\\
0
\end{bmatrix},v_3=\begin{bmatrix}
-\frac{\beta_1}{2\alpha}\\
0\\
\frac{\beta_1}{2\alpha}\\
-1\\
0\\
1
\end{bmatrix},
v_4=\begin{bmatrix}
-\frac{\beta_2}{2\alpha}\\
\frac{\beta_2}{2\alpha}\\
0\\
-1\\
1\\
0
\end{bmatrix}, 
v_5=\begin{bmatrix}
-\frac{\beta_2}{2\alpha}\\
0\\
\frac{\beta_2}{2\alpha}\\
-1\\
0\\
1
\end{bmatrix},
v_6=\begin{bmatrix}
2\\
2\\
2\\
n\alpha\\
n\alpha\\
n\alpha
\end{bmatrix},
$$
where 
$$
\beta_1=-\dfrac{n-1+\sqrt{(n-1)^2-8n^2\alpha^2(a-4b)}}{2n}<0,\quad \beta_2=-\dfrac{n-1-\sqrt{(n-1)^2-8n^2\alpha^2(a-4b)}}{2n}<0.
$$
Evaluate \eqref{normal vector} at $p_1$, it is clear that 
$ T_{p_1}(C\cap \{\mathcal{H}\equiv 1\})=\spann\{v_2,v_3,v_4,v_5\}.$
Critical point $p_1$ is a sink. Hence $\lim\limits_{\eta\to\infty}\gamma_{s_1}=p_1$.
\end{proof}

\begin{lemma}
\label{ACCCCCmetric}
Ricci-flat metrics represented by $\gamma_{s_1}$ are AC.
\end{lemma}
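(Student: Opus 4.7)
The plan is to combine Lemma \ref{ricci flat limit} with a rate analysis at the sink $p_1$ and the recovery formulas \eqref{recovery}. First, I would check that the four eigenvalues $\beta_1,\beta_1,\beta_2,\beta_2$ of $\mathcal{L}(p_1)$ tangent to $C\cap\{\mathcal{H}\equiv 1\}=\spann\{v_2,v_3,v_4,v_5\}$ are strictly negative: from the formulas in Lemma \ref{ricci flat limit} this reduces to $a-4b>0$ and $(n-1)^2>8n^2\alpha^2(a-4b)$, both readily verified case by case from Table \ref{constant}. By the stable-manifold theorem,
\[
|\gamma_{s_1}(\eta)-p_1|=O(e^{\beta\eta})\quad\text{as }\eta\to\infty,\qquad \beta:=\max(\beta_1,\beta_2)<0,
\]
so in particular $\mathcal{G}(\gamma_{s_1}(\eta))-\tfrac{1}{n}=O(e^{\beta\eta})$ is integrable at $\eta=+\infty$.

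Next I translate this back to the $t$-coordinate using \eqref{recovery}. Since $(\trace(L)^{-1})'=\mathcal{G}\trace(L)^{-1}$, integrating and using the above yields
\[
\exp\!\left(\int_{\eta_0}^\eta\mathcal{G}\,d\tilde\eta+\tilde t_0\right)=A\,e^{\eta/n}\bigl(1+O(e^{\beta\eta})\bigr)
\]
for some $A>0$, and therefore
\[
t(\eta)=nA\,e^{\eta/n}\bigl(1+O(e^{\beta\eta})\bigr),\qquad f_j(\eta)=\frac{A\,e^{\eta/n}\bigl(1+O(e^{\beta\eta})\bigr)}{\alpha+O(e^{\beta\eta})}.
\]
Hence $t\to\infty$, $\eta=n\log t+O(1)$, and $f_j(t)/t\to\frac{1}{n\alpha}=:c$; a direct calculation with $\alpha=\frac{1}{n}\sqrt{(n-1)/(a-b)}$ gives $c=\sqrt{(a-b)/(n-1)}$.

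Finally set $g_N:=c^2 Q$. Since $p_1$ has $f_1=f_2=f_3$, the metric $g_N$ is a positive multiple of the normal Einstein metric on $G/K$ encoded by $p_1$. The estimates above give $f_j^2(t)-c^2t^2=O(t^{2+n\beta})$, so the difference
\[
g_M-(dt^2+t^2g_N)=\sum_{j=1}^3\bigl(f_j^2(t)-c^2t^2\bigr)\,Q|_{\mathfrak{p}_j}
\]
has $(dt^2+t^2g_N)$-norm of order $O(t^{n\beta})\to 0$, verifying Definition \ref{AC} and identifying the asymptotic cone.

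The main technical step is the bookkeeping in the second paragraph, converting exponential decay in $\eta$ to decay in $t$; it relies crucially on integrability of $\mathcal{G}-\tfrac{1}{n}$ at infinity, which in turn comes from the strict negativity of all four eigenvalues of $\mathcal{L}(p_1)$ along $T_{p_1}(C\cap\{\mathcal{H}\equiv 1\})$.
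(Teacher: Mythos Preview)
Your argument is correct, but the paper's proof is much shorter and takes a more direct route. The paper observes the algebraic identity $\dot{f_j}=\dfrac{X_j}{\sqrt{Z_kZ_l}}$ (immediate from \eqref{def of variable}), so that Lemma \ref{ricci flat limit} alone gives $\lim_{t\to\infty}\dot{f_j}=\dfrac{1/n}{\alpha}=\sqrt{\dfrac{a-b}{n-1}}$ directly, without any rate analysis or use of \eqref{recovery}. From $\dot{f_j}\to c$ one gets $f_j\sim ct$ and hence the asymptotic cone $dt^2+t^2c^2Q$; that is the entire proof.

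Your approach instead exploits the spectral information at $p_1$ to obtain exponential decay in $\eta$, then pushes this through \eqref{recovery} to get a quantitative polynomial rate $O(t^{n\beta})$ in the $t$-variable. This is more work but buys you something the paper's argument does not: an explicit decay rate for the error $g_M-(dt^2+t^2g_N)$, rather than the bare $o(1)$ required by Definition \ref{AC}. Your checks that $\beta_1,\beta_2<0$ and that $\mathcal{G}-\tfrac{1}{n}$ is integrable are correct for all three cases in Table \ref{constant}. So both arguments are valid; the paper's is more elementary, yours is sharper.
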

\begin{proof}
For each $j$, we have
\begin{equation}
\label{ACrate}
\begin{split}
\lim_{t\to \infty} \dot{f_j}=\lim_{\eta\to \infty}\frac{X_j}{\sqrt{Z_kZ_l}}=\sqrt{\dfrac{a-b}{n-1}}.
\end{split}
\end{equation}
Therefore by Definition \ref{AC}, the Ricci-flat metric represented by $\gamma_{s_1}$ has conical asymptotic limit $dt^2+t^2\dfrac{a-b}{n-1} Q.$
\end{proof}

Lemma \ref{ricci flat limit} and Lemma \ref{ACCCCCmetric} imply Theorem \ref{main 2}.

\section{Singular Ricci-flat Metrics}
\label{a digression}
This section is dedicated to singular Ricci-flat metrics. Note that critical points $p_1$ and $p_2$ can be viewed as integral curves defined on $\mathbb{R}$. They correspond to singular Ricci-flat metrics $g=dt+t^2\dfrac{a-b}{n-1} Q$. This is consistent with the fact that the Euclidean metric cone over a proper scaled homogeneous Einstein manifold is Ricci-flat. For Case I in particular, the normal Einstein metric on $G/K$ is strict nearly K\"ahler. Hence the metric cone represented by $p_1$ is the singular $G_2$ metric discovered in \cite{bryant1987metrics}. Note that functions $F_j$'s in \eqref{G2condition} do note vanish at $p_2$. Therefore, the Euclidean metric cone over the K\"ahler--Einstein metric has generic holonomy.

There are also singular Ricci-flat metrics represented by nontrivial integral curves. Recall Remark \ref{G2}, The cohomogeneity one $G_2$ condition is given by $F_j\equiv 0$ for each $j$. Eliminate $X_j$'s in the conservation law $C$ shows that
\begin{equation*}
\begin{split}
\blacktriangle= &C\cap \{\mathcal{H}\equiv 1\} \cap P\cap \{F_1\equiv F_2\equiv F_3\equiv 0\}\\
&=\{Z_1+Z_2+Z_3-1\equiv 0\}\cap P\cap \{F_1\equiv F_2\equiv F_3\equiv 0\}
\end{split}
\end{equation*}
is an invariant 2-dimensional plane with boundary. Its projection in $Z$-space is plotted in Figure \ref{Fig G2}. Black squares are critical poitns of Type II. Linearization at these points shows that they are sources. Furthermore, for any $\xi\in\mathbb{R}$, $\blacktriangle\cap \{Z_3(Z_1-Z_2)-\xi Z_2(Z_1-Z_3)\equiv 0\}$ is a pair of integral curves that connects three critical points. If $\xi\neq 0,1$, then these two integral curves connect $p_1$ with two distinct critical points of Type II. These integral curves represent singular cohomogeneity one $G_2$ metrics on $(0,\infty)\times G/K$ that do not have smooth extension to $G/H$\cite{cvetivc2002cohomogeneity}\cite{cleyton_cohomogeneity-one_2002}. They all share the same AC limit as the metric cone over $G/K$ equipped with the normal Einstein metric.

When $\xi=0,1$, then one of the integral curve connects a critical point of Type II with $p_1$ and the other one connects a critical point of Type III with $p_1$. In particular, if $\xi=1$, then we recover $\gamma_0$ that represents the $G_2$ metric, connecting $p_0$ and $p_1$.

\begin{figure}[h!]
\begin{center}
\includegraphics[width=6.3in]{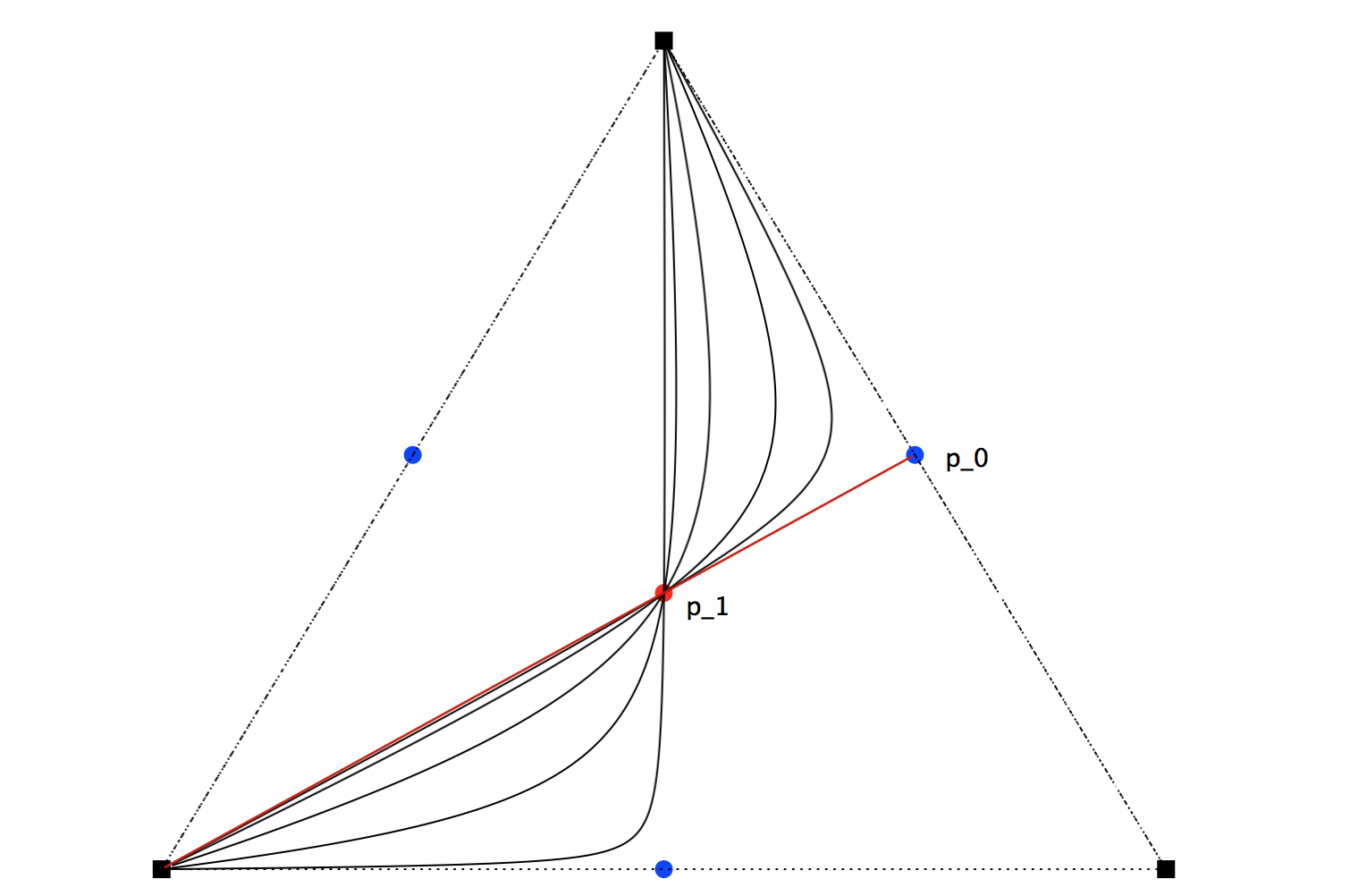}
\caption{Integral curves on $\blacktriangle$ with $0<\xi\leq 1$}
\label{Fig G2}
\end{center}
\end{figure}


There are singular metrics with generic holonomy. We construct a new compact invariant set whose boundary includes $p_1$ and $p_2$. Consider 
$$\check{S}_3=S_3\cap \{X_1\equiv X_2, Z_1\equiv Z_2\}\cap\{X_1+X_2-2X_3\geq 0\}\cap \left\{(d-1)^2Z_1Z_2-4b^2Z_3^2\geq0\right\}.$$
\begin{proposition}
$\check{S}_3$ is a compact invariant set.
\end{proposition}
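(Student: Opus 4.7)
The plan is to mimic the arguments used for $S_3$ and $\hat S_3$ in Proposition \ref{S_3} and Lemma \ref{never escape}: show that on each new codimension-one face of $\partial\check{S}_3$ the vector field $V$ of \eqref{new Ricci-flat} is inward-pointing or tangent, and then derive compactness from the defining inequalities combined with Proposition \ref{sharp} and the conservation law \eqref{conservation law}.

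First I will observe that the linear subspace $\Sigma := \{X_1 = X_2,\ Z_1 = Z_2\}$ is itself flow-invariant, which is essentially the $1\leftrightarrow 2$ symmetry of \eqref{new Ricci-flat} noted in Remark \ref{symmetry 2}; concretely, both $(X_1-X_2)' = \mathcal{R}_1-\mathcal{R}_2 = (Z_2-Z_1)\bigl(aZ_3-2b(Z_1+Z_2)\bigr)$ and $(Z_1-Z_2)' = 2(Z_1X_1 - Z_2X_2)$ vanish identically on $\Sigma$. Since $\check{S}_3 \subset S_3$ and $S_3$ is already flow-invariant by Propositions \ref{S_3} and \ref{T_3}, only the two new boundary faces need to be checked.

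The core of the argument is that the two new inequalities reinforce one another on their respective boundary faces. On $\Sigma$ the condition $(d-1)^2 Z_1 Z_2 - 4b^2 Z_3^2 \geq 0$ reduces to $(d-1)Z_1 \geq 2bZ_3$, and with $a-2b = d-1$ by Remark \ref{scalr of spher basic ineq} the derivative on the face $\{X_1+X_2-2X_3 = 0\}$ collapses (using $\mathcal{R}_1 = \mathcal{R}_2$) to $2(\mathcal{R}_1-\mathcal{R}_3) = 2(Z_3-Z_1)\bigl((a-2b)Z_1 - 2bZ_3\bigr)$, which is non-negative precisely because of the other condition. Conversely, writing $B := (d-1)^2 Z_1 Z_2 - 4b^2 Z_3^2$ and differentiating via $Z_j' = Z_j(\mathcal{G}-\tfrac{1}{d}+2X_j)$, one finds after substituting $(d-1)^2 Z_1 Z_2 = 4b^2 Z_3^2$ that $B'|_{B=0}$ collapses to $8b^2 Z_3^2(X_1+X_2-2X_3)$, which is non-negative by the first new inequality. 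So the two conditions mutually preserve each other along the flow.

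For compactness, in $\check{S}_3$ the chain $2bZ_3 \leq (d-1)Z_1 = (a-2b)Z_1 < a(Z_1+Z_2)$ places us in the regime where Proposition \ref{sharp} applies and bounds $Z_1+Z_2$; then $B \geq 0$ forces $Z_3 \leq \tfrac{d-1}{2b}Z_1$, bounding $Z_3$, and \eqref{conservation law} bounds the $X_j$. I expect no serious obstacle here — the only mildly delicate point is the symbiotic nature of the two new inequalities, which is exactly what dictates the specific coefficients $(d-1)^2$ and $4b^2$ in the definition of $\check{S}_3$, and which makes $\check{S}_3$ the right stage on which to construct the singular Ricci-flat integral curve from $p_2$ to $p_1$ announced in Remark \ref{important criti}.
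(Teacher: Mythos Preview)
Your proposal is correct and follows essentially the same route as the paper's own proof: both rely on the flow-invariance of $\{X_1=X_2,\ Z_1=Z_2\}$, check that each of the two new boundary faces is preserved precisely because of the \emph{other} new inequality, and obtain compactness via Proposition~\ref{sharp} after noting that $(d-1)^2Z_1Z_2\geq 4b^2Z_3^2$ forces $2bZ_3\leq a(Z_1+Z_2)$. The only cosmetic difference is that the paper writes the face computations as $2(Z_3-Z_2)\bigl((d-1)\sqrt{Z_1Z_2}-2bZ_3\bigr)$ and $2(d-1)^2Z_1Z_2(X_1+X_2-2X_3)$, which on the subspace $Z_1=Z_2$ coincide with your expressions.
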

\begin{proof}
It is easy to show that $\{X_1\equiv X_2, Z_1\equiv Z_2\}$ is flow invariant. In fact, even if we define $\check{S}_3$ without $\{X_1\equiv X_2, Z_1\equiv Z_2\}$, the set is still compact and invariant. However, considering the subsystem does make the computation easier.

In $\check{S}_3$, we have
\begin{equation}
\begin{split}
4b^2Z_3^2&\leq (d-1)^2Z_1Z_2<a^2Z_1Z_2\leq a^2(Z_1+Z_2)^2.
\end{split}
\end{equation}
Hence we can apply Proposition \ref{sharp} and conclude that inequality \eqref{sharp Z_1+Z_2} holds in $\check{S}_3$. As $Z_3$ is bounded above by $\frac{d-1}{2b}\sqrt{Z_1Z_2}$ in $\check{S}_3$, the compactness follows by \eqref{conservation law}.

To show that $\check{S}_3$ is invariant, 
consider
\begin{equation*}
\begin{split}
\left. \langle\nabla(X_1+X_2-2X_3),V\rangle \right|_{X_1+X_2-2X_3=0}&=(X_1+X_2-2X_3)(\mathcal{G}-1)+\mathcal{R}_1+\mathcal{R}_2-2\mathcal{R}_3\\
&=2\mathcal{R}_2-2\mathcal{R}_3\quad \text{ since $Z_1\equiv Z_2$ in $\check{S}_3$ and $X_1+X_2-2X_3=0$}\\
&=2(Z_3-Z_2)((d-1)\sqrt{Z_1Z_2}-2bZ_3)\quad \text{ since $Z_1\equiv Z_2$ in $\check{S}_3$}\\
&\geq 0 \quad \text{by definition of $\check{S}_3$}
\end{split}.
\end{equation*}
Moreover, we have 
\begin{equation*}
\begin{split}
&\left. \langle\nabla((d-1)^2Z_1Z_2-4b^2Z_3^2),V\rangle \right|_{(d-1)^2Z_1Z_2-4b^2Z_3^2=0}\\
&=\left.\nabla\left(Z_3^2\left((d-1)^2\dfrac{Z_1Z_2}{Z_3^2}-4b^2\right)\right)\right|_{(d-1)^2Z_1Z_2-4b^2Z_3^2=0}\\
&= \left((d-1)^2Z_1Z_2-4b^2Z_3\right)\left(\mathcal{G}-\frac{1}{d}+2X_3\right)+2(d-1)^2Z_1Z_2\left(X_1+X_2-2X_3\right)\\
&=2(d-1)^2Z_1Z_2\left(X_1+X_2-2X_3\right)\quad \text{ since $(d-1)^2Z_1Z_2-4b^2Z_3^2=0$}\\
&\geq 0 \quad \text{ by definition of $\check{S}_3$}
\end{split}.
\end{equation*}
Hence $\check{S}_3$ is a compact invariant set.
\end{proof}

\begin{lemma}
\label{singularcurve}
There exists an integral curve $\Gamma$ defined on $\mathbb{R}$ emanating from $p_2$ in $\check{S}_3$.
\end{lemma}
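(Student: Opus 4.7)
The plan is to produce $\Gamma$ as an orbit on the unstable manifold of $p_2$, and then to exploit forward invariance of the compact set $\check{S}_3$ to extend $\Gamma$ to all of $\mathbb{R}$. Write $p_2=(\frac{1}{n},\frac{1}{n},\frac{1}{n},\frac{2b}{d-1}\zeta,\frac{2b}{d-1}\zeta,\zeta)$ with $\zeta:=\frac{1}{n}\sqrt{\frac{(n-1)(d-1)}{b(a+2b)}}$, so that the defining relation of $p_2$ is $(d-1)Z_1=(d-1)Z_2=2bZ_3$. I first check $p_2\in\partial\check{S}_3$: both $X_1+X_2-2X_3$ and $(d-1)^2Z_1Z_2-4b^2Z_3^2$ vanish at $p_2$, while every other defining inequality of $S_3$ is strict at $p_2$ (reducing to $a-4b>0$, a direct consequence of Remark~\ref{scalr of spher basic ineq}).

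Next, since $\{X_1\equiv X_2,\,Z_1\equiv Z_2\}$ is flow-invariant, the 2-dimensional submanifold $\Sigma:=C\cap\{\mathcal{H}\equiv 1\}\cap\{X_1\equiv X_2,\,Z_1\equiv Z_2\}$ is flow-invariant and contains both $p_1$ and $p_2$. Taking $e_1=(1,1,-2,0,0,0)$ and $e_2=(0,0,0,b,b,-(d-1))$ as a basis of $T_{p_2}\Sigma$ (tangency to $\nabla\mathcal{H}$ is immediate, and tangency of $e_2$ to $\nabla C|_{p_2}$ follows from the relation $(d-1)Z_1=2bZ_3$ at $p_2$), a short direct substitution of the entries of \eqref{linearization} yields
\begin{equation*}
\mathcal{L}(p_2)\,e_1=-\frac{n-1}{n}\,e_1+\frac{4\zeta}{d-1}\,e_2,\qquad \mathcal{L}(p_2)\,e_2=b(a-4b)\zeta\,e_1.
\end{equation*}
Hence $\left.\mathcal{L}(p_2)\right|_{T_{p_2}\Sigma}$ has trace $-\frac{n-1}{n}$ and determinant $-\frac{4b(a-4b)\zeta^2}{d-1}<0$, so $p_2$ is a saddle on $\Sigma$ with a positive eigenvalue $\lambda_+$.

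The associated unstable eigenvector $v_u=v_1 e_1+e_2$ with $v_1=\frac{b(a-4b)\zeta}{\lambda_++(n-1)/n}>0$ points strictly into $\check{S}_3$, because $d(X_1+X_2-2X_3)\cdot v_u=6v_1>0$ and $d\bigl((d-1)^2Z_1Z_2-4b^2Z_3^2\bigr)\cdot v_u=12b^2(d-1)\zeta>0$. The unstable manifold theorem then furnishes an orbit $\Gamma$ with $\lim_{\eta\to-\infty}\Gamma(\eta)=p_2$ that enters $\check{S}_3$ for $\eta$ slightly greater than some $\eta_0$; forward invariance traps $\Gamma$ in the compact set $\check{S}_3$ for all $\eta\ge\eta_0$, ruling out blow-up and yielding $\Gamma$ defined on all of $\mathbb{R}$.

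The main obstacle is the linearization step: one must pick a basis of $T_{p_2}\Sigma$ so that $\left.\mathcal{L}(p_2)\right|_{T_{p_2}\Sigma}$ becomes a tractable $2\times 2$ matrix, and simultaneously verify that the unstable eigenvector lies in the tangent cone of $\check{S}_3$ at $p_2$. Both steps ultimately hinge on the single inequality $a>4b$ provided by Remark~\ref{scalr of spher basic ineq}.
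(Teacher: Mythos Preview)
Your proof is correct and follows the same overall strategy as the paper: linearize at $p_2$, identify a single unstable eigendirection, verify that it points into the interior of $\check{S}_3$ (only the two functions $X_1+X_2-2X_3$ and $(d-1)^2Z_1Z_2-4b^2Z_3^2$ vanish at $p_2$), and then invoke compactness and forward invariance of $\check{S}_3$ to secure global existence.

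The difference is in the execution of the linearization. The paper computes the full $6\times 6$ matrix $\mathcal{L}(p_2)$, asserts it is hyperbolic with exactly one unstable eigenvalue
\[
\check{\lambda}=\tfrac{1}{2n}\Bigl(\sqrt{(n-1)^2+96n(d-1)(a-4b)Z_*^2}-(n-1)\Bigr),
\]
and writes down the eigenvector $\check{v}$ explicitly; one then checks by inspection that $\check{v}$ is tangent to $C\cap\{\mathcal{H}\equiv 1\}$ and, in fact, to $\{X_1\equiv X_2,\,Z_1\equiv Z_2\}$. You bypass this by restricting at the outset to the flow-invariant $2$-surface $\Sigma=C\cap\{\mathcal{H}\equiv 1\}\cap\{X_1\equiv X_2,\,Z_1\equiv Z_2\}$, which already contains $\check{S}_3$, and computing $\mathcal{L}(p_2)|_{T_{p_2}\Sigma}$ as a $2\times 2$ matrix in the basis $(e_1,e_2)$. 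Your determinant $-\tfrac{4b(a-4b)\zeta^2}{d-1}$ is negative precisely because $a-4b>0$ (Remark~\ref{scalr of spher basic ineq}), so the saddle structure and the positivity of $v_1$ drop out with no case analysis; your $\lambda_+$ indeed coincides with the paper's $\check{\lambda}$ after the identity $n=24b$. What the paper's route buys is the additional information that $p_2$ has \emph{no other} unstable directions in the full $4$-dimensional phase space $C\cap\{\mathcal{H}\equiv 1\}$, hence the uniqueness (up to homothety) stated in Theorem~\ref{pointy1}; your $2$-dimensional reduction gives existence on $\mathbb{R}$ more cheaply but does not by itself rule out unstable directions transverse to $\Sigma$.
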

\begin{proof}
Consider $p_2=\left(\frac{1}{n},\frac{1}{n},\frac{1}{n},\frac{2}{n}\sqrt{\frac{(n-1)b}{(d-1)(a+2b)}},\frac{2}{n}\sqrt{\frac{(n-1)b}{(d-1)(a+2b)}},\frac{1}{n}\sqrt{\frac{(n-1)(d-1)}{b(a+2b)}}\right)$. For simplicity, denote $Z_*=\frac{2}{n}\sqrt{\frac{(n-1)b}{(d-1)(a+2b)}}$. 
The linearization at $p_2$ is
\begin{equation}
\mathcal{L}(p_2)=\begin{bmatrix}
\frac{5}{9d}-1&\frac{2}{9d}&\frac{2}{9d}&2bZ_*&\left(\frac{a(d-1)}{2b}-2b\right)Z_*&2bZ_*\\
\frac{2}{9d}&\frac{5}{9d}-1&\frac{2}{9d}&\left(\frac{a(d-1)}{2b}-2b\right)Z_*&2bZ_*&2bZ_*\\
\frac{2}{9d}&\frac{2}{9d}&\frac{5}{9d}-1&(d-1)Z_*&(d-1)Z_*&(d-1)Z_*\\
\frac{5}{3}Z_*&-\frac{1}{3}Z_*&-\frac{1}{3}Z_*&0&0&0\\
-\frac{1}{3}Z_*&\frac{5}{3}Z_*&-\frac{1}{3}Z_*&0&0&0\\
-\frac{d-1}{6b}Z_*&-\frac{d-1}{6b}Z_*&-\frac{5(d-1)}{6b}Z_*&0&0&0\\
\end{bmatrix}
\end{equation}
Straightforward computation shows that for all cases, $L(p_2)$ is a hyperbolic critical point that has only one unstable eigenvalues with the corresponding eigenvector as
$$
\check{\lambda}=\dfrac{1}{2n}\left(\sqrt{(n-1)^2+96n(d-1)(a-4b)Z_*^2}-(n-1)\right),\quad 
\check{v}=\begin{bmatrix}
b\check{\lambda}\\
b\check{\lambda}\\
-2b\check{\lambda}\\
2bZ_*\\
2bZ_*\\
-2(d-1)Z_*
\end{bmatrix}
$$
Evaluate \eqref{normal vector} at $p_2$, it is clear that $\check{v}$ are tangent to $C\cap \{\mathcal{H}\equiv 1\}$. Fix $\check{s}_0>0$, there exists a unique trajectory $\Gamma$ emanating from $p_2$ with
$
\Gamma\sim p_2+\check{s}_0e^{\check{\lambda}\eta}\check{v}.
$

It is easy to check that $p_2\in\partial\check{S}_3$ with only $X_1+X_2-2X_3$ and $(d-1)^2Z_1Z_2-4b^2Z_3^2$
vanished at $p_2$. By straightforward computation, we know that $\Gamma$ is trapped in $\check{S}_3$ initially. The integral curve is hence defined on $\mathbb{R}$. Functions $f_j$'s that correspond to solutions $\Gamma$ are defined on $[0,\infty)$. 
\end{proof}

\begin{lemma}
\label{limitGamma}
The integral curve $\Gamma$ converges to $p_1$.
\end{lemma}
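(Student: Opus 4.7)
The plan is to identify the $\omega$-limit set $\omega(\Gamma)$ inside the compact invariant set $\check{S}_3$ as $\{p_1\}$, using the monotonicity of $Z_1 Z_2 Z_3$ from Proposition \ref{Z1Z2Z3} to constrain it to critical points within the $2$-dimensional flow-invariant subsystem $\{X_1 \equiv X_2,\, Z_1 \equiv Z_2\}$, and then ruling out all other candidates. This parallels the strategy of Lemma \ref{ricci flat limit}.

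First, by compactness of $\check{S}_3$, the set $\omega(\Gamma)$ is non-empty, compact, connected and flow-invariant. Proposition \ref{Z1Z2Z3} gives $\lim_{\eta\to\infty}(Z_1 Z_2 Z_3)(\Gamma(\eta)) = Z_\infty \geq (Z_1 Z_2 Z_3)(p_2) > 0$, and flow-invariance forces $Z_1 Z_2 Z_3 \equiv Z_\infty$ on $\omega(\Gamma)$. Therefore the derivative $(Z_1 Z_2 Z_3)' = Z_1 Z_2 Z_3(3\mathcal{G} - \tfrac{1}{d})$ vanishes on $\omega(\Gamma)$, and since $Z_\infty > 0$ this forces $\mathcal{G} \equiv \tfrac{1}{n}$. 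By Cauchy--Schwarz, $\mathcal{G} = \tfrac{1}{n}$ together with $\mathcal{H} \equiv 1$ forces $X_1 = X_2 = X_3 = \tfrac{1}{n}$ throughout $\omega(\Gamma)$, which in turn makes $Z_j' = Z_j(\mathcal{G} - \tfrac{1}{d} + 2X_j) = 0$ on $\omega(\Gamma)$. Hence every point of $\omega(\Gamma)$ is a critical point of the system.

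Next I would classify such critical points in the subsystem. With $X_j = \tfrac{1}{n}$ and $X_j' = 0$, the equations give $\mathcal{R}_j = \tfrac{n-1}{n^2}$ for every $j$. Setting $\mu := Z_3/Z_1$ and using $Z_1 = Z_2$, the condition $\mathcal{R}_1 = \mathcal{R}_3$ reduces to the quadratic $2b\mu^2 - a\mu + (a - 2b) = 0$, whose discriminant $(a - 4b)^2$ yields the two roots $\mu = 1$ and $\mu = \tfrac{d-1}{2b}$, corresponding precisely to $p_1$ and $p_2$. Therefore $\omega(\Gamma) \subseteq \{p_1, p_2\}$.

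The main obstacle will be to eliminate $p_2$ from $\omega(\Gamma)$. I plan to use that $\Gamma$ emanates from $p_2$ along the unstable eigenvector $\check{v}$ of Lemma \ref{singularcurve}, whose $X$-components $(b\check{\lambda}, b\check{\lambda}, -2b\check{\lambda})$ are not all zero. A direct Taylor expansion then gives $\mathcal{G}(p_2 + \epsilon \check{v}) = \tfrac{1}{n} + 6 d b^2 \check{\lambda}^2 \epsilon^2 + O(\epsilon^3) > \tfrac{1}{n}$ for small $\epsilon > 0$, so $(Z_1 Z_2 Z_3)' > 0$ along $\Gamma$ immediately after it leaves $p_2$. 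Combined with monotonicity, this yields $Z_1 Z_2 Z_3(\Gamma(\eta_0)) > Z_1 Z_2 Z_3(p_2)$ at any fixed finite $\eta_0$, and hence $Z_\infty > Z_1 Z_2 Z_3(p_2)$, contradicting $p_2 \in \omega(\Gamma)$. Therefore $\omega(\Gamma) = \{p_1\}$, and since the linearization in the proof of Lemma \ref{ricci flat limit} shows $p_1$ is a sink of the restricted system on $C \cap \{\mathcal{H} \equiv 1\}$, subsequential convergence automatically upgrades to $\lim_{\eta \to \infty} \Gamma(\eta) = p_1$.
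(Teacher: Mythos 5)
Your proof is correct, and it is organized differently from the paper's. The paper disposes of this lemma in one line by asserting that the arguments of Lemma \ref{ricci flat limit} carry over to the compact invariant set $\check{S}_3$: there one extracts a sequence $\eta_m\to\infty$ along which $\mathcal{G}\to\frac{1}{n}$ (using the bounded monotone quantity $Z_1Z_2Z_3$), identifies the subsequential limit via the sign of $\mathcal{R}_1+\mathcal{R}_2-2\mathcal{R}_3$, and then upgrades to genuine convergence because $p_1$ is a sink of the restricted system. You instead run a LaSalle-type argument: the $\omega$-limit set is compact, connected and invariant, $Z_1Z_2Z_3$ is constant on it, Cauchy--Schwarz with $\mathcal{H}\equiv 1$ forces $X_1=X_2=X_3=\frac{1}{n}$ there, hence the $\omega$-limit set consists of critical points of the subsystem $\{X_1\equiv X_2,\,Z_1\equiv Z_2\}$, which your quadratic in $\mu=Z_3/Z_1$ correctly identifies as $\{p_1,p_2\}$ (the Type I points being excluded by $Z_\infty>0$). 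The substantive difference is your explicit elimination of $p_2$ from the $\omega$-limit set, and this step is genuinely needed: with $Z_1\equiv Z_2$ one has $\mathcal{R}_1+\mathcal{R}_2-2\mathcal{R}_3=2(Z_3-Z_1)\bigl((d-1)Z_1-2bZ_3\bigr)$, which vanishes not only on the $p_1$ locus $Z_3=Z_1$ but also along $(d-1)Z_1=2bZ_3$, exactly the boundary face of $\check{S}_3$ containing $p_2$ from which $\Gamma$ emanates; the paper's ``carry over'' glosses over this. Your observation that $\mathcal{G}>\frac{1}{n}$ strictly at some finite time forces $Z_\infty>(Z_1Z_2Z_3)(p_2)$, so that $p_2$ cannot recur in the $\omega$-limit set, closes this cleanly (one can even shortcut the Taylor expansion: since $\mathcal{G}-\frac{1}{n}=d\sum_j(X_j-\frac{1}{n})^2$, having $\mathcal{G}\equiv\frac{1}{n}$ along $\Gamma$ would make $\Gamma$ a critical point). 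Two minor remarks: once $\omega(\Gamma)=\{p_1\}$, convergence is automatic by compactness, so the sink property of $p_1$ is not actually needed; and the monotonicity of $Z_1Z_2Z_3$ along $\Gamma$ should be cited as the computation \eqref{increase} applied to $\Gamma$ rather than Proposition \ref{Z1Z2Z3} verbatim, which is stated only for $\gamma_{s_1}$. Overall your route is more self-contained and fills in a detail the paper leaves implicit, at the cost of some extra bookkeeping.
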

\begin{proof}
Since $\check{S}_3$ is a compact invariant set with $X_3\leq \frac{1}{n}$. Arguments in Lemma \ref{ricci flat limit} and  \ref{ACCCCCmetric} carry over. Hence for $\Gamma$, we have 
$\lim\limits_{\eta\to\infty}\Gamma=p_1$. 
\end{proof}

For each $j$, we have
\begin{equation}
\label{ACpointy}
\begin{split}
&\lim_{t\to 0} \dot{f_j}=\lim_{\eta\to -\infty}\frac{X_j}{\sqrt{Z_kZ_3}}=\frac{\check{\lambda}}{2Z_*} \quad j,k\in\{1,2\}\\
&\lim_{t\to 0} \dot{f_3}=\lim_{\eta\to -\infty}\frac{X_3}{\sqrt{Z_1Z_2}}=\frac{b\check{\lambda}}{(d-1)Z_*}
\end{split}.
\end{equation}
Hence $f_1=f_2\sim \frac{\check{\lambda}}{2Z_*}t$ and $f_3\sim \frac{b\check{\lambda}}{(d-1)Z_*}t$ as $t\to 0$. Since $\lim\limits_{t\to \infty} \dot{f_1}\neq \lim\limits_{t\to \infty} \dot{f_3}$, $\Gamma$ represents a singular metric whose end at $t\to 0$ is a conical singularity as a metric cone over the alternative Einstein metrics. Lemma \ref{singularcurve} and Lemma \ref{limitGamma} then prove the following theorem.

\begin{theorem}
\label{pointy1}
Up to homothety, there exists a unique singular Ricci-flat metric on $(0,\infty)\times G/K$ that at the end with $t\to 0$, it admits conical singularity as the metric cone over $G/K$ with alternative Einstein metric. It has an AC limit at the end with $t\to \infty$ as the metric cone over $G/K$ with normal Einstein metric.
\end{theorem}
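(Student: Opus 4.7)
The plan is to package Lemmas \ref{singularcurve} and \ref{limitGamma} with an asymptotic analysis analogous to Lemma \ref{ACCCCCmetric}, and then supply a uniqueness argument based on the linearization at $p_2$. For existence, I will use the integral curve $\Gamma$ already produced: by Lemma \ref{singularcurve} it emanates from $p_2$ and is trapped in the compact invariant set $\check{S}_3$, hence is defined for all $\eta\in\mathbb{R}$; by Lemma \ref{limitGamma}, $\Gamma\to p_1$ as $\eta\to\infty$. The corresponding $(f_1,f_2,f_3)$ is recovered via \eqref{recovery}, and as in the argument at the end of Section \ref{Local Existence} (Lemma 5.1 of \cite{buzano2015family}), the $f_j$'s are defined on $(0,\infty)$, so $g=dt^2+g_{G/K}(t)$ is a Ricci-flat metric on $(0,\infty)\times G/K$.

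For the two ends I proceed as follows. At $\eta\to\infty$ (equivalently $t\to\infty$), since $\Gamma\to p_1$ the computation $\lim_{\eta\to\infty}\tfrac{X_j}{\sqrt{Z_kZ_l}}=\sqrt{\tfrac{a-b}{n-1}}$ of Lemma \ref{ACCCCCmetric} applies verbatim, producing an AC end with limit the metric cone over $\tfrac{a-b}{n-1}Q$, i.e., a suitable multiple of the normal Einstein metric on $G/K$. At $\eta\to-\infty$ (equivalently $t\to 0$) I invoke \eqref{ACpointy} to obtain $f_1=f_2\sim \tfrac{\check{\lambda}}{2Z_*}t$ and $f_3\sim \tfrac{b\check{\lambda}}{(d-1)Z_*}t$. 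Since these two slopes differ, $g$ does not extend smoothly over $t=0$; instead, the rescaled metrics $t^{-2}g$ converge as $t\to 0$ to a fixed homogeneous metric on $G/K$ whose summand ratios are exactly those encoded by the $Z$-coordinates of $p_2$, which by construction is the alternative Einstein metric. This realizes the $t\to 0$ end as a conical singularity modeled on the metric cone over the alternative Einstein metric.

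For uniqueness up to homothety, I use the linearization computed inside the proof of Lemma \ref{singularcurve}: on the flow-invariant submanifold $C\cap\{\mathcal{H}\equiv 1\}$, the critical point $p_2$ is hyperbolic with exactly one positive eigenvalue $\check{\lambda}$ and eigenvector $\check{v}$ tangent to the submanifold. By the stable/unstable manifold theorem, any integral curve backward-asymptotic to $p_2$ must lie on the $1$-dimensional unstable manifold of $p_2$, so it is uniquely determined up to a shift $\eta\mapsto \eta+\eta_0$ (equivalently, up to rescaling the parameter $\check{s}_0>0$). Under the recovery \eqref{recovery}, such a shift corresponds precisely to the homothetic rescaling $g\mapsto \kappa^2 g$ described in Remark \ref{already}. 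Hence, among all singular Ricci-flat metrics on $(0,\infty)\times G/K$ whose $t\to 0$ end is asymptotic to the cone over the alternative Einstein metric, the curve $\Gamma$ produces exactly one homothety class.

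The main point requiring care is the identification of the $t\to 0$ asymptotics with a genuine conical singularity over the alternative Einstein metric (rather than a weaker statement about limits of $f_j/t$): this is where the explicit formulas \eqref{ACpointy} are essential, together with the fact that $p_2$ itself represents the alternative Einstein cone, so the limiting link is automatically Einstein. All other ingredients—compact invariant set, long-time existence, convergence to $p_1$, and AC behavior—have been established in Section \ref{a digression} and are reused verbatim.
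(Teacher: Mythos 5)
Your proposal is correct and follows essentially the same route as the paper: existence via the integral curve $\Gamma$ of Lemma \ref{singularcurve} trapped in $\check{S}_3$, convergence to $p_1$ via Lemma \ref{limitGamma} with the AC computation of Lemma \ref{ACCCCCmetric}, the $t\to 0$ asymptotics read off from \eqref{ACpointy}, and uniqueness up to homothety from the one-dimensionality of the unstable manifold at the hyperbolic critical point $p_2$ together with the fact that a time shift in $\eta$ corresponds to a homothety. The only difference is that you spell out the uniqueness and the identification of the conical singularity more explicitly than the paper, which simply cites the two lemmas and the computation \eqref{ACpointy}.
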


%
Results of this article can be summarized by the plot in the following page. It shows the projection of integral curves to \eqref{new Ricci-flat} on the $Z$-space for Case I. It is computed by MATLAB using the 4th order Runge--Kutta method.

\begin{table}[h!]
\centering
\label{Lambda}
\begin{tabular}{l l} 
\hline
Integral Curves &\enskip Metric Type \\
\hline
$\gamma_{0}$    & 
$\begin{array}{l}
\text{Smooth metric with vanished principal curvatures on } G/H;\\
\text{Nonsmooth metric with non-vanishing mean curvatures on } G/H
\end{array}$\\
\hline
$\gamma_{s_1}, s_1\neq 0$  &
$\begin{array}{l}
\text{Smooth metrics with non-zero principal curvatures on } G/H \\
\text{Nonsmooth metrics with non-vanishing mean curvatures on } G/H
\end{array}$\\
\hline
$\Gamma$        &
$\begin{array}{l}
\text{Conical Singularity as alternative Einstein metric on $G/K$};
\end{array}$\\
\hline
\end{tabular}.
\end{table}
\begin{figure}[h!]
\begin{center}
\includegraphics[width=6.3in]{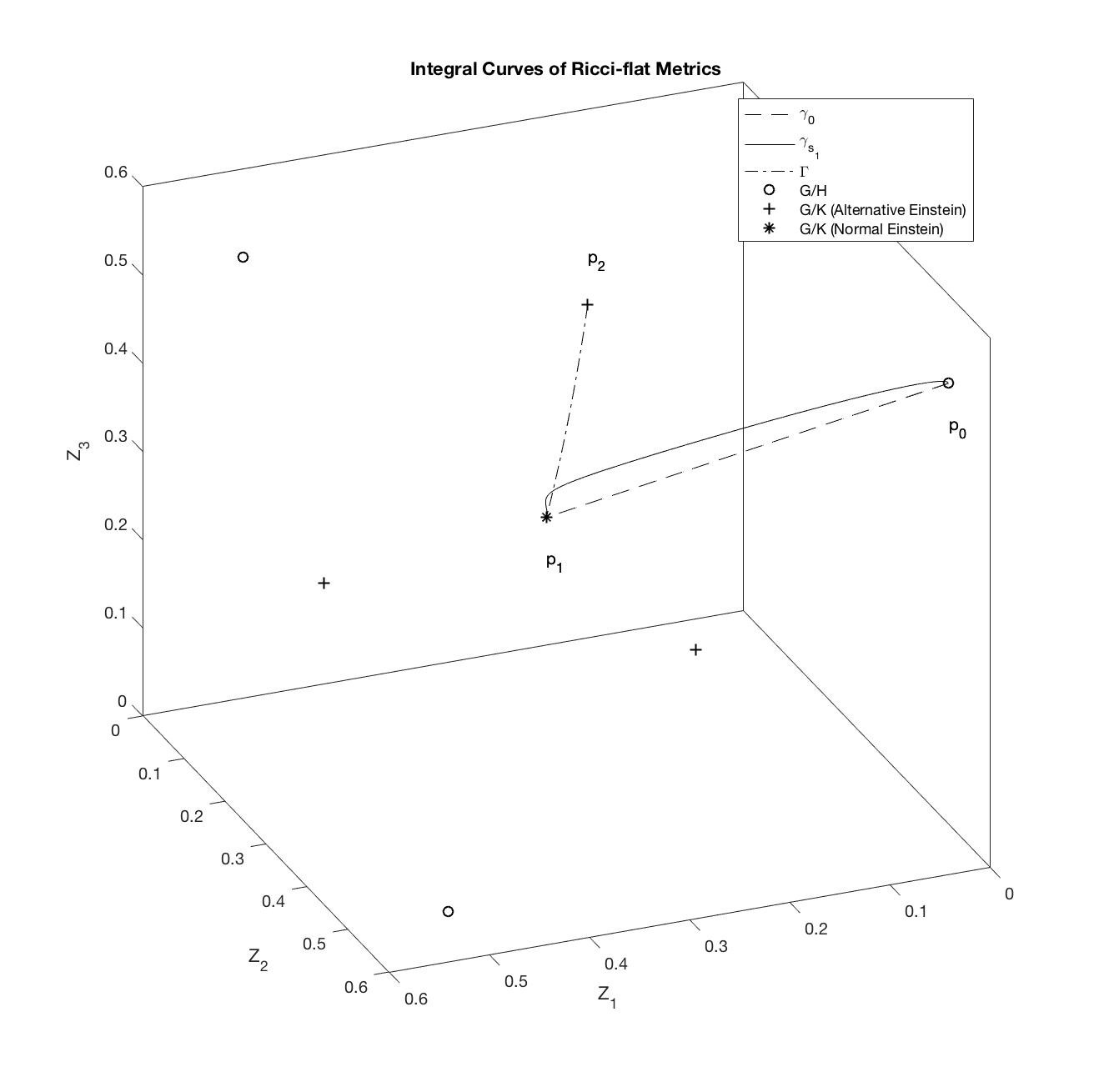}
\label{Fig 1}
\end{center}\end{figure}

\newpage

\bibliography{C1EMoW}
\bibliographystyle{plain}
\end{document}